\documentclass[10pt,a4]{article}
\usepackage{latexsym}
\usepackage{amsmath}
\usepackage{amssymb}
\usepackage{amsthm}
\usepackage{amscd}
\usepackage{mathrsfs}
\usepackage[all]{xy}

\def\isom
{\simeq\hspace{-1.1em}\raisebox{-1.2ex}{$\rightarrow$}}

\setlength{\topmargin}{0in}
\setlength{\oddsidemargin}{0.35in}
\setlength{\evensidemargin}{0.35in}
\setlength{\textwidth}{5.7in}
\setlength{\textheight}{8.7in}
\setlength{\parskip}{3mm}

\newtheorem{theorem}{Theorem}[section]
\newtheorem{lemma}[theorem]{Lemma}
\newtheorem{corollary}[theorem]{Corollary}
\newtheorem{proposition}[theorem]{Proposition}

\theoremstyle{definition}

\newtheorem{remark}[theorem]{Remark}
\newtheorem{definition}[theorem]{Definition}

\theoremstyle{remark}
\newtheorem{notation}{Notation}

\makeatletter

\@addtoreset{equation}{section}
\makeatother

\def\La{\Lambda}

\newcommand{\cF}{{\cal F}}
\newcommand{\cK}{{\cal K}}
\newcommand{\cH}{{\cal H}}

\renewcommand{\eqref}[1]{(\ref{#1})}

\renewcommand{\bigskip}{\vspace{0.2cm}}
\newcommand{\juq}{{ {j}_! }}

\begin{document}

\title{On localizations of the characteristic classes
of $\ell$-adic sheaves and
  conductor formula in characteristic $p>0$}

\maketitle

\begin{center}
{\sc Takahiro Tsushima}\\
{\it
Graduate School of Mathematical Sciences,
The University of Tokyo, 3-8-1 Komaba, Meguro-ku
Tokyo 153-8914, JAPAN}
\\
E-mail: tsushima@ms.u-tokyo.ac.jp
\end{center}

\begin{abstract}
The Grothendieck-Ogg-Shafarevich formula 
calculates the $\ell$-adic Euler-Poincar\'{e} number
of 
an $\ell$-adic sheaf on a curve
by an invariant produced by the wild ramification of 
the $\ell$-adic sheaf named Swan class.
A.\ Abbes, K.\ Kato and T.\ Saito
generalize this formula to any dimensional scheme
in \cite{KS} and \cite{AS}.
In this paper, assuming the strong resolution of singularities
 we prove a localized
version of a formula proved by A.\ Abbes and T.\ Saito
in \cite{AS} using the characteristic class of an
$\ell$-adic sheaf.
As an application, we prove a conductor formula in equal characteristic.
\end{abstract}

\section{Introduction}
\noindent
The Grothendieck-Ogg-Shafarevich formula 
calculates the $\ell$-adic Euler-Poincar\'{e} number
of 
an $\ell$-adic sheaf on a curve
by an invariant produced by the wild ramification of 
the $\ell$-adic sheaf named Swan class.
Generalizations of this formula to
the surface case are done by Deligne, Kato and Laumon. 
Recently this formula is generalized
to any dimensional scheme by 
A.~Abbes, K.~Kato and T.~Saito in \cite{AS} and \cite{KS}.
To generalize this formula to any dimensional
case, K.~Kato and T.~Saito defines the Swan class
of an $\ell$-adic sheaf on any dimensional scheme
using alteration and logarithmic blow-up.
A.~Abbes and T.~Saito rediscovered the characteristic
class of an $\ell$-adic sheaf using the Verdier pairing (SGA5) and studied 
its properties in \cite{AS}. 
The characteristic
class of an $\ell$-adic sheaf
on a scheme 
is 
a cohomological element in the top \'{e}tale cohomology group
which goes to
the $\ell$-adic Euler-Poincar\'{e} number under the trace map
in the case where the scheme is proper.
They calculate the characteristic
class by the Swan class defined by K.~Kato and T.~Saito.
This formula is a refinement of the results
proved by Kato and Saito in \cite{KS}.
We call this refinement the Abbes-Saito formula.
\\In this paper, 
we prove a localized version of the Abbes-Saito formula,
a localized version of the Lefschetz-Verdier trace formula and a refinement of the Kato-Saito conductor formula
in equal characteristic.

A localized version of the Abbes-Saito formula is an equality (Theorem \ref{43}) of the localized characteristic
class of a smooth $\ell$-adic sheaf and the Swan class in  an \'{e}tale
cohomology group with support.
 We call this equality the localized Abbes-Saito
formula. 
To show the localized Abbes-Saito formula, we generalize the
localized characteristic class of a smooth $\ell$-adic sheaf also
defined in \cite{AS} to the localized characteristic class 
of an $\ell$-adic sheaf
with a
cohomological correspondence and prove its compatibility with pull-back. 
Assuming
the strong resolution of singularities,
as a direct consequence of this compatibility,
we prove the localized Abbes-Saito formula in Theorem 
\ref{43}.

We prove the compatibility of the
localized chracteristic class with proper push-forward
in Proposition \ref{prop:2}.
This is a localized version of the Lefschetz-Verdier trace formula,
which we call the localized Lefschetz-Verdier trace formula.

As an application of the localized Abbes-Saito formula and the localized
Lefschetz-Verdier trace formula, we prove a conductor formula
 in equal characteristic in Corollary \ref{COroo}. The conductor
formula calculates the Swan conductor of 
an $\ell$-adic representation which appears when we consider
a fibration on a curve
by the Swan class of an $\ell$-adic sheaf defined by Kato and Saito
in \cite{KS}.
We call this conductor formula the Kato-Saito conductor formula
in characteristic $p>0$. To prove
this formula is the main purpose to consider localizations.
To refine the Kato-Saito conductor formula,
we define a localization as a cohomology class with
support on the wild locus in subsection 3.3 which we call
the logarithmic localized characteristic class
and prove its compatibility with proper
push-forward in Theorem \ref{tth}.
In \cite{T}, we prove a refinement of the formula
for a smooth sheaf of rank 1
proved by Abbes-Saito in \cite{AS} using an idea of T.\ Saito in \cite{S}.
As an application of Theorem \ref{tth} and the result in \cite{T},
we prove a refinement of the Kato-Saito conductor formula
for a smooth sheaf of rank 1
in Corollary \ref{rank1}.

It is a great pleasure for the author to thank Prof.\ T.~Saito for suggesting the problem and the idea of 
the proof of the localized Abbes-Saito formula.
Prof.\ T.\ Saito suggested to the author that the localized
Abbes-Saito formula and
the localized Lefschetz-Verdier trace formula imply the Kato-Saito
conductor formula in equal characteristic after the author proved a special case of the compatibility
of the localized characteristic class with proper push-forward
and the localized Abbes-Saito formula. 
The author would like to thank Prof.\ T.\ Saito for many suggestions, encouragements
 and for pointing out an error of a proof of
Lemma \ref{7} in an early version of this paper and giving him a useful
suggestion to improve this lemma. 
The author would like to express his sincere gratitude 
to Prof.\ Ahmed Abbes for stimulating discussions in the Tokyo University. 
The author would like to acknowledge
the hospitability of Prof.\ Deninger
in Muenster University where a part of this work was achieved. 
This research is supported by JSPS-Fellowships for Young Scientists.
\begin{notation}
In this paper,
$k$ denotes a field.
Schemes over $k$ are assumed to be separated and of finite type.
For a divisor with simple normal crossings
of a smooth scheme over $k$,
we assume that the irreducible components and their intersections are also
smooth over $k.$
The letter $l$ denotes a prime number invertible in $k$
and $\La$
denotes a finite commutative $\mathbb{Z}_l$-algebra.
For a scheme $X$ over $k$,
$D_{\rm ctf}(X)$ denotes the derived category of complexes of $\La$-modules
of finite tor-dimension on the \'{e}tale site of $X$
with constructible cohomology.
Let $\cK_X$ denote $Rf^!\La$
where $f:X \longrightarrow {\rm Spec} k$ is the structure
map and let ${\bf D}$ denote the functor $R\mathcal{H}om(\ ,\cK_X)$.
For objects $\cF$ and $\mathcal{G}$ of $D_{\rm ctf}(X)$
and $D_{\rm ctf}(Y)$ on schemes $X$ and $Y$
over $k$, $\cF \boxtimes \mathcal{G}$ denotes ${\rm pr}_1^\ast \cF \otimes {\rm pr}_2^\ast \mathcal{G}$ on $X \times Y.$
When  we say a scheme $X$ is of dimension $d$, we understand
that every irreducible component of $X$ is of dimension $d.$
\end{notation}\section{Review of the characteristic class etc.}
\subsection{Cohomological correspondence and the evaluation map}
We recall the definition and some properties of a cohomological correspondence needed in this paper from \cite[subsection 1.2]{AS}.
\begin{definition}\cite[Definition 1.2.1]{AS}
Let $X$ and $Y$ be schemes over $k$ and $\mathcal{F}$ and $\mathcal{G}$
 be objects of $D_{\rm ctf}(X)$ and of $D_{\rm ctf}(Y)$ respectively. We call a correspondence between $X$ and $Y$ a scheme $C$ over $k$ and morphisms $c_1:C \longrightarrow X$ and
$c_2:C \longrightarrow Y$ over $k.$ We put $c=(c_1,c_2):C \longrightarrow X \times Y$ the corresponding morphism.
We call a morphism $u:c_2^\ast \mathcal{G} \longrightarrow Rc_1^!\mathcal{F}$ a
cohomological correspondence from $\mathcal{G}$
to $\mathcal{F}$ on $C$. \end{definition}
We identify the cohomological
correspondence with a section of
$$H_{C}^0(X \times Y,R\mathcal{H}om({\rm pr}_2^\ast \mathcal{G},R{\rm pr}_1^! \mathcal{F}))
\simeq H^0(C,{\cH}om(c_2^\ast \mathcal{G},Rc_1^! \cF)).$$
A typical example of a cohomological correspondence is given as follows.
Assume $X$ and $Y$ are smooth of dimension $d$ over $k$
and $c=(c_1,c_2):C \longrightarrow X \times Y$
is a closed immersion.
Let $\mathcal{F}$ and $\mathcal{G}$ be sheaves of free $\La$-modules on
$X$ and $Y$ respectively and assume that $\mathcal{G}$ is smooth.
Then, the canonical map
$c^\ast \mathcal{H}om({\rm pr}_2^\ast \mathcal{G},{\rm pr}_1^\ast \mathcal{F})
\longrightarrow
\mathcal{H}om(c_2^\ast \mathcal{G},c_1^\ast \mathcal{F})$
is an isomorphism and we identify
${\rm Hom}(c_2^\ast \mathcal{G},c_1^\ast \mathcal{F})=
\Gamma(C,c^\ast \mathcal{H}om({\rm pr}_2^\ast \mathcal{G},{\rm pr}_1^\ast \mathcal{F}))$.
Since ${\rm pr}_1:X \times Y \longrightarrow X$
is smooth, we have a canonical isomorphism
${\rm pr}_1^\ast \mathcal{F}(d)[2d]=R{\rm pr}_1^! \mathcal{F}$
and we identify $R\mathcal{H}om({\rm pr}_2^\ast \mathcal{G},R{\rm pr}_1^! \mathcal{F})
=
\mathcal{H}om({\rm pr}_2^\ast \mathcal{G},{\rm pr}_1^\ast \mathcal{F})(d)[2d].$
Then the cycle class map $CH_{d}(C) \longrightarrow H_{C}^{2d}(X \times Y,\La(d))$ induces a pairing
$$CH_{d}(C) \otimes {\mathrm{Hom}}(c_2^\ast \mathcal{G},c_1^\ast \mathcal{F}) \longrightarrow
H_{C}^{2d}(X \times Y,\La(d)) \otimes
\Gamma(C,c^\ast \mathcal{H}om({\rm pr}_2^\ast \mathcal{G},{\rm pr}_1^\ast \mathcal{F}))$$
$$\longrightarrow H_{C}^0(X \times Y,R\mathcal{H}om({\rm pr}_2^\ast \mathcal{G},R{\rm pr}_1^! \mathcal{F})).\!\!\!\!\!\!\!\!\!\!\!\!\!\!\!\!\!\!\!\!\!\!\!\!\!\!\!\!\!\!\!\!\!\!\!\!\!\!\!\!\!\!\!\!$$
In other words, the pair $(\Gamma,\gamma)$ of a cycle class $\Gamma \in
CH_{d}(C)$
and a homomorphism
$\gamma:c_2^\ast \mathcal{G} \longrightarrow c_1^\ast \mathcal{F}$
defines a cohomological correspondence $u(\Gamma,\gamma).$
\\We recall the definition of the push-forward of a cohomological correspondence. We consider the commutative diagram
\begin{equation}\label{2-1}
\xymatrix{
X \ar[d]_{f} & C \ar[l]_{c_1} \ar[r]^{c_2}\ar[d]_{h} & Y \ar[d]^{g} \\
X' & C' \ar[l]_{c'_1}\ar[r]^{c'_2} & Y' \\
}
\end{equation}
of schemes over $k$. A canonical isomorphism
$$R(f \times g)_\ast R\mathcal{H}om({\rm pr}_2^\ast \mathcal{G},R{\rm pr}_1^! \mathcal{F}) \longrightarrow R\mathcal{H}om({\rm pr}_2^\ast Rg_!\mathcal{G},R{\rm pr}_1^! Rf_\ast \mathcal{F})$$
is defined in \cite[(3.3.1)]{Gr}, using the isomorphism
$$ R\mathcal{H}om({\rm pr}_2^\ast \mathcal{G},R{\rm pr}_1^! \mathcal{F}) \longrightarrow \mathcal{F} \boxtimes^{L} {\bf D}\mathcal{G}$$ defined in \cite[(3.1.1)]{Gr}.
In the diagram \eqref{2-1}, we assume that the vertical arrows are proper. The above diagram defines a commutative diagram
\[\xymatrix{
C \ar[r]^{\!\!\!\!\!\!\!\!\!c}\ar[d]^{h} & X \times Y \ar[d]^{f \times g} \\
C' \ar[r]^{\!\!\!\!\!\!\!\!\!c'} & X' \times Y'.\\
}
\]
Let $u:c_2^\ast \mathcal{G} \longrightarrow Rc_1^!\mathcal {F}$
 be a cohomological correspondence. We identify $u$ with a map $u:\La \longrightarrow Rc^!R\mathcal{H}om({\rm pr}_2^\ast \mathcal{G},R{\rm pr}_1^!\mathcal{F})$.
Then, it induces a map $\La \longrightarrow Rh_\ast \La \longrightarrow Rh_\ast Rc^!R\mathcal{H}om({\rm pr}_2^\ast \mathcal{G},R{\rm pr}_1^!\mathcal{F})$
where the first map
$\La \longrightarrow Rh_\ast \La$ is the adjunction of the identity.
By the assumption that $f,g$ and $h$ are proper, the base change map defines a map of functors $Rh_\ast Rc^!=Rh_!Rc^! \longrightarrow Rc'^!R(f \times g)_\ast$.
By composing them with the isomorphism
$$R(f \times g)_\ast R\mathcal{H}om({\rm pr}_2^\ast \mathcal{G},R{\rm pr}_1^! \mathcal{F}) \longrightarrow R\mathcal{H}om({\rm pr}_2^\ast Rg_!\mathcal{G},R{\rm pr}_1^! Rf_\ast \mathcal{F}),$$ we obtain a map
$$\La \longrightarrow Rh_\ast Rc^!R\mathcal{H}om({\rm pr}_2^\ast \mathcal{G},R{\rm pr}_1^!\mathcal{F})$$ 
$$\longrightarrow Rc'^!R(f \times g)_\ast R\mathcal{H}om({\rm pr}_2^\ast \mathcal{G},R{\rm pr}_1^!\mathcal{F}) \longrightarrow Rc'^!R\mathcal{H}om({\rm pr}_2^\ast Rg_!\mathcal{G},R{\rm pr}_1^! Rf_\ast \mathcal{F}).$$
We define the push-forward $h_\ast u:c_2^\ast Rg_! \mathcal{G}=
{c'_2}^\ast Rg_\ast {\mathcal{G}} \longrightarrow {Rc'_1}^! Rf_\ast {\mathcal{F}}$ of $u$ to be the corresponding cohomological correspondence. 
The push-forward $h_\ast u$ is equal to the composition of the maps
$${c'_2}^\ast Rg_\ast \mathcal{G} \longrightarrow Rh_\ast c_2^\ast \mathcal{G} \longrightarrow Rh_\ast Rc_1^! \mathcal{F} \longrightarrow {Rc'_1}^! Rf_\ast \mathcal{F}$$
where the first and the third maps are the base change maps and the second map is the push-forward $h_\ast u$.

We consider the commutative diagram
\begin{equation}\label{2}
\xymatrix{
U \ar[d]_{j_U} & C \ar[r]^{c_2} \ar[d]^{j_C} \ar[l]_{c_1} & V \ar[d]^{j_V} \\
X & \bar{C} \ar[r]^{\bar{c}_2} \ar[l]_{\bar{c}_1} & Y\\
}
\end{equation}
of schemes over $k$ where the vertical arrows are open immersions. Let $\mathcal{F}$ and $\mathcal{G}$ be objects of $D_{\rm ctf}(X)$ and $D_{\rm ctf}(Y)$
respectively and $\bar{u}:\bar{c}_2^\ast \mathcal{G} \longrightarrow R\bar{c}_1^! \mathcal{F}$ be a cohomological correspondence on $\bar{C}$.
 Let $\cF_U=j_{U}^\ast \cF$ and $\mathcal{G}_V=j_V^\ast \mathcal{G}$ be the restrictions. We identify $j_C ^\ast R\bar{c}_1^!\cF=j_C ^! R\bar{c}_1^!\cF=Rc_1^!\cF_U$ by the composite isomorphism. Then, the restriction $j_C^\ast \bar{u}$ on $C$ defines a cohomological correspondence $u:c_2^\ast \mathcal{G}_V=j_C^\ast \bar{c}_2^\ast \mathcal{G} \longrightarrow j_C^\ast R\bar{c}_1^!\mathcal{F}=Rc_1^!\cF_U$.

We recall the zero-extension of a cohomological correspondence playing an important role when we define a refined (localized) characteristic class.

\begin{lemma}{\rm \cite[lemma 1.2.2]{AS}}
Let the notation be as above and let $j:U \times V \longrightarrow
 X \times Y $ be the product $j_{U} \times j_{V}$.
We put
$\mathcal{H}=R\mathcal{H}om({\rm pr}_2 ^\ast \mathcal{G}_V,R{\rm pr}_1^!\mathcal{F}_U )$
on
$U \times V$ and
$\bar{\cH}=R\mathcal{H}om({\rm pr}_2 ^\ast \mathcal{G},R{\rm pr}_1^!\mathcal{F} )$
on
$X \times Y.$
We identify a cohomological correspondence
$\bar{u}:\bar{c}_2^\ast \mathcal{G} \longrightarrow R\bar{c}_1^!\mathcal{F}$
with a section $\bar{u}$ of
${R\bar{c}}^! \bar{\mathcal{H}}$
and the associated map
$\bar{u}:\bar{c}_! \La \longrightarrow \bar{\mathcal{H}}.$
We also identify the restriction
$u=j_C ^\ast \bar{u}:c_2 ^\ast \mathcal{G}_V \longrightarrow Rc_1^! \mathcal{F}_U$
with a section $u$ of $Rc^! \mathcal{H}$
and the associated map
$u:c_! \La \longrightarrow \mathcal{H}.$
Then, we have the following.
\\1.
The section $u$ of $Rc^! \mathcal{H}$ is the image of the restriction of
$\bar{u}$ by the composition isomorphism
$j_C^\ast {R\bar{c}}^! \bar{\mathcal{H}} \longrightarrow Rc^!j^! \bar{\mathcal{H}}=Rc^!j^ \ast \bar{\mathcal{H}}=Rc^!\mathcal{H}.$
\\2.
The square
\[\xymatrix{
j^\ast \bar{c}_!\La \ar[r]^{j^\ast \bar{u}} &
j^\ast \bar{\mathcal{H}} \ar[d]\\
c_! \La \ar[u]\ar[r]^{u} & \mathcal{H}  \\
}
\]
is commutative.

\end{lemma}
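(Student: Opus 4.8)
The plan is to read both assertions as compatibilities, with restriction along the open immersions, of the two repackagings of a cohomological correspondence used in the statement: the ``section'' avatar in $R\bar c^!\bar{\mathcal H}$ and the ``adjoint map'' avatar $\bar c_!\La\to\bar{\mathcal H}$. The single algebraic input I would isolate first is the canonical isomorphism
\[
\phi_f\colon Rf^!R\mathcal{H}om(A,B)\xrightarrow{\ \sim\ }R\mathcal{H}om(f^\ast A,Rf^!B),
\]
valid for arbitrary $f$ and $A,B$ (it follows from the $(Rf_!,Rf^!)$-adjunction and the projection formula by Yoneda), since writing $f_1={\rm pr}_1f$, $f_2={\rm pr}_2f$ it is exactly $\phi_f$ that turns a section of $Rf^!\mathcal{H}om({\rm pr}_2^\ast\mathcal G,R{\rm pr}_1^!\mathcal F)$ into a map $f_2^\ast\mathcal G\to Rf_1^!\mathcal F$, realizing the identification recalled in the excerpt. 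Because $\phi_f$ is built only from that adjunction and the projection formula, I would begin by recording how each interacts with the restriction $j=j_U\times j_V$: the identities $j^!=j^\ast$, $j_C^!=j_C^\ast$; functoriality $R(\bar cj_C)^!=Rc^!Rj^!$ coming from $\bar cj_C=jc$; the base-change isomorphisms $j^\ast R{\rm pr}_1^!\mathcal F\simeq R{\rm pr}_1^!\mathcal F_U$ and $j^\ast R\mathcal{H}om(A,B)\simeq R\mathcal{H}om(j^\ast A,j^\ast B)$ along the \'etale map $j$; and the base-change map comparing $j^\ast\bar c_!$ with $c_!j_C^\ast$. Together the first ones yield the identity $j^\ast\bar{\mathcal H}=\mathcal H$ used throughout.

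For the first assertion I would observe that the composition isomorphism of the statement is nothing but functoriality of the exceptional pullback,
\[
\begin{aligned}
j_C^\ast R\bar c^!\bar{\mathcal H}
&=j_C^!R\bar c^!\bar{\mathcal H}=R(\bar cj_C)^!\bar{\mathcal H}=R(jc)^!\bar{\mathcal H}\\
&=Rc^!j^!\bar{\mathcal H}=Rc^!j^\ast\bar{\mathcal H}=Rc^!\mathcal H,
\end{aligned}
\]
so that no cartesian hypothesis is needed here. The content is then the commutativity of the square comparing $j_C^\ast\phi_{\bar c}$ with $\phi_c$: that passing from the section $\bar u$ to the map $\bar c_2^\ast\mathcal G\to R\bar c_1^!\mathcal F$ and then restricting agrees with first restricting the section and then applying $\phi_c$ to obtain $c_2^\ast\mathcal G_V\to Rc_1^!\mathcal F_U$. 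Since the definition $u=j_C^\ast\bar u$ is precisely the restriction of the \emph{map}, this is exactly the first assertion. I would prove the commutativity by unwinding $\phi$ through its defining adjunction chain and invoking the base-change compatibilities collected above, one natural transformation at a time.

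For the second assertion I would pass to adjoints. The map $\bar u\colon\bar c_!\La\to\bar{\mathcal H}$ is the image of the section $\bar u\colon\La\to R\bar c^!\bar{\mathcal H}$ under the counit of $(\bar c_!,R\bar c^!)$,
\[
\bar c_!\La\longrightarrow\bar c_!R\bar c^!\bar{\mathcal H}\longrightarrow\bar{\mathcal H},
\]
and likewise for $u$. Applying $j^\ast$, I would use that the base-change map $c_!\La\to j^\ast\bar c_!\La$ (the left vertical arrow of the square) commutes with the counit of $(\,\cdot\,)_!$ under restriction, the standard coherence of this counit with \'etale base change; combined with the first assertion for the sections and with $j^\ast\bar{\mathcal H}=\mathcal H$ (the right vertical arrow), this identifies $j^\ast\bar u$ with $u$ and proves commutativity of the square.

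I expect the main obstacle to be organizational rather than conceptual: verifying that $\phi_f$ is carried correctly by $j^\ast$ (the commuting square of the second paragraph) and that the counit of $(Rc_!,Rc^!)$ commutes with the same base change. Both are instances of the coherence of the six-functor formalism under \'etale base change, but assembling $\phi$ from the projection formula and the adjunction and then checking that $j^\ast$ transports each constituent natural transformation correctly is where all the care lies. Once these two coherences are in place, both parts follow from the diagram chases indicated above.
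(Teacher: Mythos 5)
The paper offers no proof of this lemma: it is recalled verbatim from \cite[Lemme 1.2.2]{AS} with only a citation, so there is no in-paper argument to compare against. Your proposal is correct and is the standard argument one would give: both assertions are coherences of the six-functor formalism, namely the compatibility of the isomorphism $Rf^!R\mathcal{H}om(A,B)\simeq R\mathcal{H}om(f^\ast A,Rf^!B)$ (built from the $(Rf_!,Rf^!)$-adjunction and the projection formula) and of the counit of $(c_!,Rc^!)$ with restriction along the open immersions $j$ and $j_C$, and you identify exactly these two points, correctly observing that no cartesian hypothesis is needed (that only enters in the next lemma on zero-extensions) and that the left vertical arrow $c_!\La\to j^\ast\bar{c}_!\La$ is the map adjoint to $\bar{c}_!$ applied to the counit ${j_C}_!j_C^\ast\to{\rm id}$. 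The only thing left implicit is the execution of the two diagram chases, which for a lemma of this purely formal nature is an acceptable level of detail.
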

\begin{lemma}{\rm \cite[lemma 1.2.3]{AS}}
Assume that the right square in the diagram (\ref{2}) is cartesian. Let $\cF$ and
 $\mathcal{G}$ be objects of $D_{\rm ctf}(U)$ and of $D_{\rm ctf}(V)$ respectively
and $u:c_2^\ast \mathcal{G} \longrightarrow Rc_1^!\cF$ be a cohomological correspondence on $C$. Then, there exists a unique cohomological correspondence $\bar{u}:c_2^\ast {j_V}_!\mathcal{G} \longrightarrow R\bar{c}_1^!{j_U}_!\mathcal{F}$ on $\bar{C}$ such that $j_C^\ast \bar{u}=u.$
\end{lemma}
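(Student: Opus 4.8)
The plan is to produce $\bar u$ through a chain of natural isomorphisms of $\hom$-groups that transports a cohomological correspondence on $\bar C$ (between the zero-extensions) onto the given correspondence $u$ on the open part $C$. The guiding observation is that for the open immersions $j_C$, $j_U$, $j_V$ the extension-by-zero functors are left adjoint to restriction and all the relevant unit/counit maps are isomorphisms, so extending and restricting are inverse operations on the level of $\hom$. Note first that ${j_U}_!\cF$ and ${j_V}_!\mathcal{G}$ again lie in $D_{\rm ctf}(X)$ and $D_{\rm ctf}(Y)$, so that $\bar u$ is a correspondence in the sense defined above. Concretely, I would compute the group
$$\hom_{\bar C}(\bar{c}_2^\ast {j_V}_!\mathcal{G},\, R\bar{c}_1^!\,{j_U}_!\cF)$$
and show it is canonically isomorphic to the group $\hom_C(c_2^\ast \mathcal{G},\, Rc_1^!\cF)$ containing $u$; then $\bar u$ is defined as the element corresponding to $u$, and both existence and uniqueness are built into the isomorphism.

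First I would rewrite the source object. Since the right square of \eqref{2} is cartesian and $j_V$ is an open immersion, base change for the functor $(-)_!$ supplies a canonical isomorphism $\bar{c}_2^\ast {j_V}_!\mathcal{G} \cong {j_C}_!\,c_2^\ast \mathcal{G}$. Substituting this and applying the adjunction ${j_C}_! \dashv j_C^\ast$ gives
$$\hom_{\bar C}({j_C}_!\,c_2^\ast \mathcal{G},\, R\bar{c}_1^!\,{j_U}_!\cF) \cong \hom_C(c_2^\ast \mathcal{G},\, j_C^\ast R\bar{c}_1^!\,{j_U}_!\cF).$$
Next I would identify the target. The composite isomorphism recorded after diagram \eqref{2}, namely $j_C^\ast R\bar{c}_1^! = j_C^! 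R\bar{c}_1^! = R(\bar{c}_1 j_C)^! = R(j_U c_1)^! = Rc_1^! j_U^! = Rc_1^! j_U^\ast$ (using only that $j_C$, $j_U$ are open immersions and that the left square commutes), applied to the sheaf ${j_U}_!\cF$ together with $j_U^\ast {j_U}_!\cF = \cF$, yields $j_C^\ast R\bar{c}_1^!\,{j_U}_!\cF \cong Rc_1^!\cF$. Combining the two displays, the $\hom$-group on $\bar C$ is canonically isomorphic to $\hom_C(c_2^\ast \mathcal{G}, Rc_1^!\cF)$, and I define $\bar u$ to be the class corresponding to $u$.

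It remains to verify the compatibility $j_C^\ast \bar u = u$, and this is the step I would treat most carefully. The adjunction isomorphism for ${j_C}_! \dashv j_C^\ast$ sends $\phi\colon {j_C}_!\,c_2^\ast \mathcal{G} \to \mathcal{A}$ to $j_C^\ast \phi$ precomposed with the unit $c_2^\ast \mathcal{G} \to j_C^\ast {j_C}_!\,c_2^\ast \mathcal{G}$; but for the open immersion $j_C$ this unit is the identity, so the isomorphism is literally restriction along $j_C$. Since the target identification $j_C^\ast(R\bar{c}_1^!\,{j_U}_!\cF) = Rc_1^!\cF$ is exactly the one used to define $j_C^\ast \bar u$ as a correspondence on $C$, the class corresponding to $u$ does restrict to $u$ on the nose. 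The main obstacle is therefore not any single step — each is a standard manipulation in the formalism of the six functors — but confirming that the base-change isomorphism, the adjunction, and the composite $!$-pullback identification are mutually coherent, i.e.\ that tracing $\bar u$ back through the chain returns $u$ rather than some sign or auto-equivalence of it; this reduces to checking that every unit and counit for the open immersions involved is the canonical identity, after which uniqueness is automatic because the entire chain consists of isomorphisms.
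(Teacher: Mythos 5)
Your argument is correct: base change for extension by zero along the cartesian right square gives $\bar{c}_2^\ast {j_V}_!\mathcal{G}\simeq {j_C}_!c_2^\ast\mathcal{G}$, and the adjunction $({j_C}_!,j_C^\ast)$ together with the composite identification $j_C^\ast R\bar{c}_1^!{j_U}_!\mathcal{F}\simeq Rc_1^!\mathcal{F}$ makes restriction along $j_C$ a bijection on the relevant $\hom$-groups, which yields both existence and uniqueness at once. The paper itself supplies no proof, merely citing \cite[Lemma 1.2.3]{AS}, and your argument is precisely the standard one used there, including the coherence point that the unit of the open-immersion adjunction is the canonical identity so that the bijection is literally $\bar{u}\mapsto j_C^\ast\bar{u}$.
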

\begin{corollary}{\rm \cite[Corollary 1.2.4]{AS}}\label{zero}
1. Assume that the map
$c_2:C \longrightarrow V$
is proper and
$C$
is dense in $\bar{C}.$
Then the right square in the diagram (\ref{2})
is cartesian.
\\2.
Assume that the right square in the diagrm (\ref{2}) is cartesian.
Let
$\mathcal{F}$
and
$\mathcal{G}$
be objects of $D_{\rm ctf}(U)$ and of $D_{\rm ctf}(V)$ respectively and
$u:c_2 ^\ast \mathcal{G} \longrightarrow Rc_1^! \mathcal{F}$
be a cohomological correspondence on
$C.$
Then,
$\bar{u}={j_C}_! u:
\bar{c}_2^\ast {j_V}_! \mathcal{G} \longrightarrow
R\bar{c}_1^! {j_U}_! \mathcal{F}$
is the unique cohomological correspondence on $\bar{C}$
such that $j_C^\ast \bar{u}=u.$
\end{corollary}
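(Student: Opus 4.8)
The two assertions are largely independent: the first is a geometric statement about the square, the second a formal consequence once the square is known to be cartesian.

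For part 1 the plan is to analyze the canonical map $\phi=(j_C,c_2)\colon C\to \bar{C}\times_Y V$ induced by the compatible morphisms $j_C$ and $c_2$, and to show it is an isomorphism. First I would note that the projection $pr_1\colon \bar{C}\times_Y V\to\bar{C}$ is the base change of the open immersion $j_V$, hence itself an open immersion, and that $pr_1\circ\phi=j_C$; since $j_C$ is an open immersion, $\phi$ is an open immersion onto an open subscheme of $\bar{C}\times_Y V$. Next, the projection $pr_2\colon\bar{C}\times_Y V\to V$ is the base change of $\bar{c}_2$ and is in particular separated, while $pr_2\circ\phi=c_2$ is proper by hypothesis; by the cancellation property for properness (if $g\circ f$ is proper and $g$ is separated, then $f$ is proper), $\phi$ is proper. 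An open immersion is a monomorphism, and a proper monomorphism is a closed immersion, so $\phi$ is simultaneously an open and a closed immersion and its image is clopen in $\bar{C}\times_Y V$. Finally I would invoke density: because $\bar{c}_2\circ j_C=j_V\circ c_2$ carries $C$ into $V$, the open subscheme $\bar{C}\times_Y V$ of $\bar{C}$ contains $j_C(C)$, and since $C$ is dense in $\bar{C}$ it is dense in $\bar{C}\times_Y V$; a clopen dense subscheme is the whole scheme, so $\phi$ is an isomorphism and the right square is cartesian.

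For part 2 the plan is to convert the existence-and-uniqueness statement into an explicit identification by rewriting both sides of the desired correspondence. On the source, base change for the zero-extension along the cartesian right square gives $\bar{c}_2^\ast {j_V}_!\mathcal{G}\cong {j_C}_!c_2^\ast\mathcal{G}$; this is the one place where cartesianness of the right square is used. On the target, the composite isomorphism $j_C^\ast R\bar{c}_1^!({j_U}_!\mathcal{F})\cong Rc_1^!(j_U^\ast {j_U}_!\mathcal{F})=Rc_1^!\mathcal{F}$ recorded just before Lemma 1.2.2 (which only uses that $j_C,j_U$ are open immersions and that the left square commutes) identifies the restriction of the target with $Rc_1^!\mathcal{F}$. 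Combining these with the adjunction $({j_C}_!,j_C^\ast)$ yields a natural bijection
\[
\mathrm{Hom}(\bar{c}_2^\ast {j_V}_!\mathcal{G},R\bar{c}_1^!{j_U}_!\mathcal{F})
=\mathrm{Hom}({j_C}_!c_2^\ast\mathcal{G},R\bar{c}_1^!{j_U}_!\mathcal{F})
\cong\mathrm{Hom}(c_2^\ast\mathcal{G},Rc_1^!\mathcal{F}),
\]
under which restriction along $j_C$ corresponds to the identity. Concretely $\bar{u}$ is the composite ${j_C}_!c_2^\ast\mathcal{G}\xrightarrow{{j_C}_!u}{j_C}_!Rc_1^!\mathcal{F}\to R\bar{c}_1^!{j_U}_!\mathcal{F}$, the second arrow being the counit of the adjunction; this is exactly the zero-extension ${j_C}_!u$ of the statement. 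By construction $j_C^\ast\bar{u}=u$, and bijectivity (equivalently, the uniqueness clause of Lemma 1.2.3) shows $\bar{u}$ is the only cohomological correspondence on $\bar{C}$ restricting to $u$.

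The main obstacle is the bookkeeping in part 2: one must verify that the base-change isomorphism on the source and the composite isomorphism on the target are compatible with the adjunction counit ${j_C}_!j_C^\ast\to\mathrm{id}$, so that the resulting $\bar{u}$ really is the section-theoretic zero-extension of $u$ and agrees with the $\bar{c}_!\La\to\bar{\mathcal{H}}$ picture of Lemma 1.2.2. Part 1, by contrast, is a routine application of the cancellation property together with density; the only point needing mild care is checking that $\bar{C}\times_Y V$, regarded as an open subscheme of $\bar{C}$, genuinely contains the dense subscheme $C$.
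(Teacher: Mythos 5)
Your proof is correct; note that the paper itself states this corollary without proof, simply citing \cite[Corollary 1.2.4]{AS}, and your argument is essentially the one given there. Part 1 via the cancellation property for properness applied to $(j_C,c_2)\colon C\to\bar{C}\times_Y V$ together with density, and part 2 via the adjunction $({j_C}_!,j_C^\ast)$ combined with the base-change identification of the source and the composite identification of the target, is exactly the standard route, so nothing further is needed.
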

We call
${j_C}_!u:\bar{c}_2^\ast {j_V}_! \mathcal{G} \longrightarrow
R\bar{c}_1^! {j_U}_! \mathcal{F}$
the zero-extension of $u$.

We define the pull-back of a cohomological correspondence. Let $f:X' \longrightarrow X$ and $g:Y' \longrightarrow Y$ be morphisms of smooth schemes over $k$.
We assume ${\rm dim}\ X$=${\rm dim}\ X'$ and ${\rm dim}\ Y$=${\rm dim}\ Y'.$ Then the canonical map
$f^\ast \longrightarrow Rf^!$ and the isomorphism
$$Rc^!R\mathcal{H}om({\rm pr}_2^\ast \mathcal{G},R{\rm pr}_1^! \mathcal{F})
\longrightarrow
R\mathcal{H}om(c_2^\ast \mathcal{G},Rc_1^! \mathcal{F})$$ induce a map
{\small
$$(f \times g)^\ast R\mathcal{H}om({\rm pr}_2^\ast \mathcal{G},R{\rm pr}_1^! \mathcal{F})
\longrightarrow
R(f \times g)^!R\mathcal{H}om({\rm pr}_2^\ast \mathcal{G},R{\rm pr}_1^! \mathcal{F}) \longrightarrow R\mathcal{H}om({\rm pr'}_2^\ast g^\ast \mathcal{G},R{\rm pr'}_1^! Rf^!\mathcal{F}).$$}
With the isomorphism
$$ R\mathcal{H}om({\rm pr}_2^\ast \mathcal{G},R{\rm pr}_1^! \mathcal{F})
\longrightarrow
\mathcal{F} \boxtimes^{L} {\bf D}\mathcal{G},$$
the above composition 
is identified with the composition
$$(f \times g)^\ast(\mathcal{F} \boxtimes^{L} {\bf D}\mathcal{G})
\longrightarrow
f^\ast \cF \boxtimes^{L} g^\ast {\bf D}\mathcal{G} \longrightarrow
Rf^!\cF \boxtimes^{L} Rg^!{\bf D}\mathcal{G} \longrightarrow
Rf^!\cF \boxtimes^{L} {\bf D}g^\ast \mathcal{G}.$$
Let $c=(c_1,c_2):C \longrightarrow X \times Y$ be a correspondence and
$u:c_2^\ast \mathcal{G} \longrightarrow Rc_1^! \cF$ be a cohomological correspondence on $C$. We identify $u$ with a map $u:c_! \La \longrightarrow R\mathcal{H}om({\rm pr}_2^\ast \mathcal{G},R{\rm pr}_1^!\cF)$ as above. We define a correspondence $c'=(c'_1,c'_2):C' \longrightarrow X' \times Y'$ by the cartesian diagram
\[\xymatrix{
C' \ar[r]^{\!\!\!\!\!\!\!\!c'}\ar[d]^{h} & X' \times Y'\ar[d]^{f \times g} \\
C \ar[r]^{\!\!\!\!\!\!\!\!c} & X \times Y. \\
}
\]
By the proper base change theorem, the base change map
$(f \times g)^\ast c_! \La \longrightarrow {c'}_! \La$ is an isomorphism.
Hence the map
$u:c_!\La \longrightarrow R\mathcal{H}om({\rm pr}_2^\ast\mathcal{G}, R{\rm pr}_1^!\cF)$ induces a map
$$c'_!\La \simeq (f \times g)^\ast c_!\La \longrightarrow
(f \times g)^\ast R\mathcal{H}om({\rm pr}_2^\ast\mathcal{G},R{\rm pr}_1^! \mathcal{F}) \longrightarrow
R\mathcal{H}om({{\rm pr'}_2}^\ast g^\ast \mathcal{G},R{{\rm pr'}_1}^! Rf^!\mathcal{F}).$$
The composition defines a cohomological correspondence
$(f \times g)^\ast u:{c'_2}^\ast g^\ast \mathcal{G} \longrightarrow R{c'_1}^!Rf^!\cF=R{c'_1}^!f^\ast \cF.$
We call $(f \times g)^\ast u$ the pull-back of $u$ by $f \times g.$

We recall an evaluation map from \cite[subsection 2.1]{AS}.
Let $X$ be a scheme over $k$ and $\delta:X=\Delta_{X} \longrightarrow
X \times X$ be the diagonal map.
Let $\mathcal{F}$ be an object of $D_{\rm ctf}(X)$
and let $1=u(X,1)$ the cohomological correspondence defined by the identity of $\cF$ on the diagonal $X$. An isomorphism
$$\mathcal{H}=R\mathcal{H}om({\rm pr}_2^\ast \mathcal{F},R{\rm pr}_1^! \mathcal{F})
\longrightarrow \mathcal{F} \boxtimes^{L} {\bf D}\mathcal{F}$$
induces an isomorphism
$$\delta^\ast \mathcal{H} \longrightarrow \mathcal{F} \otimes^{L} {\bf D}\mathcal{F}.$$
Thus the evaluation map
$$\mathcal{F} \otimes^{L} {\bf D}\mathcal{F}
\longrightarrow
\mathcal{K}_X$$
induces a map
\begin{equation}\label{delta1}e:\delta^\ast \mathcal{H} \longrightarrow
\mathcal{K}_X.\end{equation}
We call this map the evaluation map of $\mathcal{F}$.
We define another evaluation map.
Let $X$ be a scheme over $k$ and $j:U \longrightarrow X$ be an open immersion over $k.$
Let
$\delta_X:X \longrightarrow X \times X$ and
$\delta_U:U \longrightarrow U \times U$ denote the diagonal maps.
Let $\mathcal{F}$ be an object of $D_{\rm ctf}(U)$.
We put $\mathcal{H}=R\mathcal{H}om({\rm pr}_2^\ast \mathcal{F},R{\rm pr}_1^! \mathcal{F})$
on $U \times U$ and
$\bar{\mathcal{H}}=R\mathcal{H}om({\rm pr}_2^ \ast {j}_! \mathcal{F},R{\rm pr}_1^! {j}_! \mathcal{F})$
on
$X \times X$ respectively.
Since we have
$\bar{\mathcal{H}}=({j} \times 1)_!R{(1 \times {j})}_\ast \mathcal{H}$
by the Kunneth formula,
we obtain a canonical isomorphism
${j}_!\delta_{U}^\ast \mathcal{H} \simeq
\delta^\ast \bar{\mathcal{H}}.$
Thus the evaluation map of $\mathcal{F}$ on $U$ defined as above
$$e:\delta_{U}^\ast \mathcal{H} \longrightarrow \mathcal{K}_U$$
induces an evaluation map
\begin{equation}\label{delta2}
{j}_!e:\delta^\ast \bar{\mathcal{H}}
\longrightarrow
{j}_! \mathcal{K}_U.\end{equation}

\subsection{Characteristic class of a $\La$-sheaf
with a cohomological correspondence}
We briefly recall the definition of the (refined) characteristic class of
a $\La$-sheaf.(c.f.\ \cite[Definition 2.1.8]{AS}.)
Let $X$ be a scheme over $k$,
$U$ an open subscheme, $j:U \longrightarrow X$ the open immersion
and
$\delta:X=\Delta_{X} \longrightarrow
X \times X$ the diagonal map.
Let $C$ be a closed subscheme of $U \times U$,
$\bar{C}$ the closure of $C$ in $X \times X,$ and
 $c:C \longrightarrow U \times U$ and
$\bar{c}:\bar{C} \longrightarrow X \times X$
the closed immersions. Let $j_C:C \longrightarrow \bar{C}$
denote the open immersion.
We assume that $C=(X \times U) \cap \bar{C}.$
Let $\mathcal{F}$ be an object of $D_{\rm ctf}(U).$
We put $\bar{\mathcal{H}}:=R\mathcal{H}om({\rm pr}_2^ \ast {j}_! \cF,R{\rm pr}_1^! {j}_! \cF)$ on $X \times X.$
Let $u$ be a cohomological correspondence of $\mathcal{F}$
on $C.$ We have the zero-extension ${j_C}_!u$
of $u$ by Corollary \ref{zero}. We identify
the cohomological correspondence ${j_C}_!u$ with a section
$${j_C}_!u \in H_{\bar{C}}^0(X \times X, \bar{\mathcal{H}}).$$
The pull-back by $\delta$ and the evaluation map (\ref{delta2})
induce
$${j_C}_!u \in H_{\bar{C}}^0(X \times X, \bar{\mathcal{H}})
\longrightarrow
H_{\bar{C} \cap X}^0(X,\delta ^\ast \bar{\mathcal{H}})
\longrightarrow H_{\bar{C} \cap X}^0(X,{j}_!\cK_U).$$
The image of ${j_C}_!u$ under the composite defines a cohomology class in $H_{\bar{C} \cap X}^0(X,j_!\cK_U).$
We denote it by $C_!({j}_!\cF,\bar{C},{j_C}_!u)$
and call it {\it the refined characteristic class of ${j}_!\cF$
with a cohomological correspondence ${j_C}_!u$ on $\bar{C}.$}
We define {\it the characteristic class
$C({j}_!\cF,\bar{C},{j_C}_!u) \in H_{\bar{C} \cap X}^0(X,\cK_X)$}
to be the image of $C_!({j}_!\cF,\bar{C},{j_C}_!u)$ under
 the canonical map
$H_{\bar{C} \cap X}^0(X,j_!\cK_U) \longrightarrow H_{\bar{C} \cap X}^0(X,\cK_X).$

If $\bar{C}$
is the diagonal $\delta(X) \subset X \times X$ and $u:\cF \longrightarrow \cF$
is an endomorphism, we drop $\bar{C}$ from the notation and simply write
$C_!({j}_!\cF,j_!u) \in H^0(X,{j}_!\cK_U)$ for the refined characteristic class
and $C({j}_!\cF,j_!u) \in H^0(X,\cK_X)$ for the characteristic class respectively.
Further, if $u$ is the identity, we simply write
$C_!({j}_!\cF)$ (resp.\ $C(j_!\cF)$) and call it the refined characteristic class of ${j}_!\cF.$ 
(resp.\ the characteristic class of ${j}_!\cF.$)

\section{Refined localized characteristic class}

\subsection{Refined localized characteristic class}
We will define a localized version of the (refined) characteristic class of a $\La$-sheaf
with a cohomological correspondence. 
Let $X$ be a scheme over a field $k$ and
$U \subseteq X$  an open dense subscheme smooth of dimension $d$ over $k$, $S:=X \backslash U$ the complement,
$j : U \longrightarrow X$ the open immersion, and $\delta_U:U \longrightarrow U \times U$
 and $\delta_X:X \longrightarrow X \times X$ the diagonal closed immersions.
Let $C$ be a closed
subscheme of $U \times U$,
 $\bar{C}$ the closure of $C$ in $X \times X$,
 $\bar{c}:\bar{C} \longrightarrow X \times X$
 the closed immersion, and
 ${g_{\bar{C}}}:X \times X \backslash \bar{C} \longrightarrow X \times X$
 and $j_{C}:C \longrightarrow \bar {C}$ the open immersions.
We assume $C=\bar{C} \cap (X \times U)$.( c.f.\ Lemma 2.3.)
 
Let $\cF$ be a smooth $\La$-sheaf on $U.$ We put 
$\bar{\cH}:=R \mathcal{H}om({\rm pr}_2^\ast{j}_!\cF,R{\rm pr}_1^!{j}_!\cF)$ on $X \times X.$
The canonical map $\La
\longrightarrow R{g_{\bar{C}}}_\ast \La$
induces a map
$$H_{\bar{C}}^0(X \times X, \bar{\mathcal{H}}) \longrightarrow
H_{\bar{C}}^0(X \times X,\bar{\mathcal{H}} \otimes R{g_{\bar{C}}}_\ast \La).$$
\begin{lemma}\label{lll1}
The canonical map
$H_{\bar{C} \backslash C}^0(X \times X,\bar{\mathcal{H}} \otimes
R{g_{\bar{C}}}_\ast \La) \longrightarrow H_{\bar{C}}^0(X \times
X,\bar{\mathcal{H}} \otimes R{g_{\bar{C}}}_\ast \La)$ is an
isomorphism.
\end{lemma}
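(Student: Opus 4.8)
The plan is to deduce the isomorphism from the localization long exact sequence for the pair of closed subsets $\bar{C}\setminus C\subseteq\bar{C}$ of $X\times X$. Since $U$ is open in $X$, the set $X\times U$ is open in $X\times X$, and the hypothesis $C=\bar{C}\cap(X\times U)$ shows that $C$ is open in $\bar{C}$, so $\bar{C}\setminus C$ is closed. Writing $V:=(X\times X)\setminus(\bar{C}\setminus C)$ and $\mathcal{K}:=\bar{\mathcal{H}}\otimes R{g_{\bar{C}}}_\ast\La$, the locally closed subset $C$ becomes closed in $V$, and the long exact sequence
$$\cdots\to H^i_{\bar{C}\setminus C}(X\times X,\mathcal{K})\to H^i_{\bar{C}}(X\times X,\mathcal{K})\to H^i_C(V,\mathcal{K})\to H^{i+1}_{\bar{C}\setminus C}(X\times X,\mathcal{K})\to\cdots$$
reduces the claim to the vanishing $H^i_C(V,\mathcal{K})=0$. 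I expect this to hold in every degree, which is more than the injectivity and surjectivity in degree $0$ that the statement requires.

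First I would excise. The open subscheme $U\times U$ of $X\times X$ lies in $V$ and satisfies $\bar{C}\cap(U\times U)=C$, so $C$ is closed in $U\times U$; excision for cohomology with support then gives $H^i_C(V,\mathcal{K})\cong H^i_C(U\times U,\mathcal{K}|_{U\times U})$. On this open locus the coefficient simplifies. Using the isomorphism $\bar{\mathcal{H}}\cong {j}_!\cF\boxtimes^{L}{\bf D}({j}_!\cF)$ from subsection 2.1, together with $({j}_!\cF)|_U=\cF$ and the fact that ${\bf D}$ commutes with restriction to an open, I obtain $\bar{\mathcal{H}}|_{U\times U}\cong\cF\boxtimes^{L}{\bf D}\cF$, a smooth, hence dualizable, complex on the smooth scheme $U\times U$ because both $\cF$ and ${\bf D}\cF=\cF^\vee(d)[2d]$ are smooth. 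Likewise $R{g_{\bar{C}}}_\ast\La|_{U\times U}=Rg''_\ast\La$ for the complementary open immersion $g'':(U\times U)\setminus C\hookrightarrow U\times U$, so $\mathcal{K}|_{U\times U}\cong(\cF\boxtimes^{L}{\bf D}\cF)\otimes^{L}Rg''_\ast\La$.

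The final step is a recollement computation. Writing $i_C:C\hookrightarrow U\times U$ for the closed immersion, I have $H^i_C(U\times U,\mathcal{K}|_{U\times U})=H^i(C,i_C^!\,\mathcal{K}|_{U\times U})$. Since $\cF\boxtimes^{L}{\bf D}\cF$ is dualizable, I can pull it out of $i_C^!$ by the projection formula, $i_C^!\big((\cF\boxtimes^{L}{\bf D}\cF)\otimes^{L}Rg''_\ast\La\big)\cong i_C^\ast(\cF\boxtimes^{L}{\bf D}\cF)\otimes^{L}i_C^!Rg''_\ast\La$, and the recollement identity $i_C^!Rg''_\ast=0$ for complementary closed and open immersions forces the whole thing to vanish. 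Hence $H^i_C(V,\mathcal{K})=0$ for all $i$, and the middle maps in the long exact sequence are isomorphisms, which proves the lemma. The points needing care — and the place where the hypothesis $C=\bar{C}\cap(X\times U)$ is essential — are precisely the verifications that $C$ is closed in $V$ and that $\bar{C}$ meets the smooth open $U\times U$ exactly in $C$, since these are what legitimize both the excision and the reduction of $\bar{\mathcal{H}}$ to a smooth complex.
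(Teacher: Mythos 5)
Your proof is correct and follows essentially the same route as the paper's: reduce by the localization sequence and excision to the vanishing of $H^i_C(U\times U,\cH_U\otimes R{g_C}_\ast\La)$ for all $i$, use smoothness of $\cF$ (via the projection formula) to move the dualizable factor outside, and conclude from the recollement vanishing $Rc^!R{g_C}_\ast=0$. The only difference is cosmetic — you spell out the excision step and apply the projection formula under $i_C^!$ rather than before applying $Rc^!$ — so there is nothing further to add.
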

\begin{proof}We put $\cH_U:=R{\cH}om({\rm pr}_2^\ast \cF,R{\rm pr}_1^!\cF)$ on $U \times U.$
By the localization sequence, it is sufficient to prove that $H^i_{C}(U \times U,\cH_U
\otimes {R{g_C}}_\ast \La)=0$ for all $i$
where $g_C:U \times U \backslash C \longrightarrow U \times U$
denotes
 the open immersion. Since $\cF$ is a smooth sheaf on $U$, the canonical
map $\cH_U \otimes {Rg_C}_\ast \La \longrightarrow {Rg_C}_\ast
g_{C}^\ast \cH_U$ is an isomorphism by the projection formula. Therefore
 we obtain isomorphisms $H_C^i(U \times
U,\cH_U \otimes {Rg_C}_\ast \La) \simeq H^i(C,Rc^!{Rg_C}_\ast
g_{C}^\ast \cH_U) \simeq 0$ 
where $c:C \longrightarrow U \times U$ is the closed immersion. 
Hence the assertion follows.
\end{proof}The pull-back by $\delta_X$ and
the evaluation map (\ref{delta2})
induce a map
$$e \cdot \delta_X ^\ast:H_{\bar{C} \backslash C}^0(X \times X,\bar{\mathcal{H}} \otimes R{g_{\bar{C}}}_\ast \La) \longrightarrow
H_{\bar{C} \cap S}^0(X, \juq \cK_U \otimes \delta_X^\ast R{g_{\bar{C}}}_\ast \La).
$$
We have obtained the maps
\begin{align}\label{definition loc}
\xymatrix{
H_{\bar{C}}^0(X \times X,\bar{\mathcal{H}} \otimes R{g_{\bar{C}}}_\ast \La)
&
H_{\bar{C} \backslash C}^0(X \times X,\bar{\mathcal{H}} \otimes R{g_{\bar{C}}}_\ast \La) 
\ar[l]^{\simeq}_{\!\!\!{\small {\rm Lemma} \ref{lll1}}}\ar[d]^{e \cdot \delta_X ^\ast} \\
{j_C}_!u \in H_{\bar{C}}^0(X \times X, \bar{\mathcal{H}}) \ar[u]^{\rm can.} & H_{\bar{C} \cap S}^0(X, \juq \cK_U \otimes \delta_X^\ast R{g_{\bar{C}}}_\ast \La).\\
}
\end{align}
We write
 \begin{equation}\label{loc map}
{\rm loc}_{X,C,\cF}:H_{\bar{C}}^0(X \times X, \bar{\mathcal{H}}) \longrightarrow H_{\bar{C} \backslash C}^0(X \times X,\bar{\mathcal{H}} \otimes R{g_{\bar{C}}}_\ast \La)
\end{equation}
for the composite
$H_{\bar{C}}^0(X \times X, \bar{\mathcal{H}}) \longrightarrow 
H_{\bar{C}}^0(X \times X,\bar{\mathcal{H}} \otimes R{g_{\bar{C}}}_\ast \La)
\simeq 
H_{\bar{C} \backslash C}^0(X \times X,\bar{\mathcal{H}} \otimes R{g_{\bar{C}}}_\ast \La).$
\begin{definition}\label{refined}
Let $u$ be a cohomological correspondence of $\mathcal{F}$ on $C$ and
 ${j_C}_! u$ the zero-extension of the cohomological correspondence $u$ recalled in subsection 2.1. 
The image of the element 
${\rm loc}_{X,C,\cF}({j_C}_!u) \in H_{\bar{C} \backslash C}^0(X \times X, \bar{\mathcal{H}} \otimes R{g_{\bar{C}}}_\ast \La)$ by the map 
$e \cdot \delta_X ^\ast$ defines 
a cohomology class in $H_{\bar{C} \cap S}^0(X, \juq \cK_U \otimes \delta_X^\ast R{g_{\bar{C}}}_\ast \La)$ and denotes
$$ C_{S,!}^0({j}_! \cF,\bar{C},{j_C}_!u) \in
 H_{\bar{C} \cap S}^0(X, \juq \cK_U \otimes \delta_X^\ast R{g_{\bar{C}}}_\ast \La).$$ We call this element
{\it the refined localized characteristic class
of ${j}_!\cF$ with a cohomological correspondence
${j_C}_!u$ on $\bar{C}$.}
If $C=\delta_U(U)$ is the diagonal and $u:\cF \longrightarrow \cF$
is an endomorphism, we drop $C$ from the notation and we write
$C_{S,!}^0({j}_! \cF,j_!u) \in H_{S}^0(X, \juq \cK_U \otimes \delta_X^\ast R{g_{X}}_\ast \La).$
Further if $u$ is the identity, we simply write $C_{S,!}^0({j}_!\cF).$
\end{definition}
We assume that $C$ is smooth purely of dimension $d$ over $k$ in the following. We have a distinguished triangle
$${c}_\ast R{c}^!\La \longrightarrow
\La \longrightarrow
R{g_C}_\ast \La \longrightarrow.$$
Since $C$ is smooth over $k$,
the cycle class $[C]$ defines an isomorphism
$\La(-d)[-2d] \isom Rc^! \La$
by the purity theorem.
Therefore we acquire a distinguished triangle
$c_\ast \La(-d)[-2d] \longrightarrow
\La \longrightarrow
R{g_C}_\ast \La \longrightarrow.$
Applying the functor ${j}_!(\cK_U \otimes \delta_U^\ast (-))$ to this triangle, we obtain a distinguished triangle
$${j}_!({\cal{K}}_{U} \otimes \delta_{U}^\ast c_\ast \La)(-d)[-2d] \longrightarrow
{j}_!{\cal{K}}_{U}  \longrightarrow {j}_!({\cal{K}}_{U}
\otimes \delta_{U}^\ast R{g_C}_\ast \La) \longrightarrow.$$ 
The canonical isomorphism on the right term ${j}_!{\cal{K}}_{U}
\otimes \delta_{X}^\ast R{g_{\bar{C}}}_\ast \La \simeq
{j}_!({\cal{K}}_{U} \otimes \delta_{U}^\ast R{g_C}_\ast \La)$
and the isomorphism $\La_U(d)[2d] \simeq \cK_U$
induce a
distinguished triangle
$${i_1}_\ast {j'}_!\La_{U \cap C} \longrightarrow
{j}_!{\cal{K}}_{U}  \longrightarrow
{j}_!{\cal{K}}_{U} \otimes \delta_{X}^\ast R{g_{\bar{C}}}_\ast \La \longrightarrow$$
where ${j'}:C \cap U \longrightarrow  {\bar{C}} \cap X $ is the open immersion and 
$i_1:{\bar{C}} \cap X \longrightarrow X$ is the closed immersion and hence
 a long exact sequence 
\begin{align}\label{exact}
H_{\bar{C} \cap S}^0({\bar{C}}\cap X,{j'}_!\La_{U \cap C}) \longrightarrow
 H_{\bar{C} \cap S}^0(X, {j}_!{\cK}_U) \longrightarrow
 H_{\bar{C} \cap S}^0(X, \juq \cK_U \otimes \delta_X^\ast R{g_{\bar{C}}}_\ast \La) \longrightarrow \cdots.
\end{align}

\begin{lemma}
\label{lem:1}
Let the notation be as above. Furhter we assume that $X$ is smooth over $k.$
If $C=\delta_U(U)$ is the diagonal,
the difference
$C_{S,!}^0({j}_! \cF,j_!u)-{\rm Tr}(u) \cdot
C_{S,!}^0({j}_! \La_U) \in H_{S}^0(X, \juq \cK_U \otimes \delta_X^\ast {Rg_X}_\ast \La)$
is in the image of the injection
$H_{S}^0(X, {j}_!{\cK}_U)
\longrightarrow
H_{S}^0(X, \juq \cK_U \otimes \delta_X^\ast {Rg_X}_\ast \La)$
 where ${\rm Tr}(u)$ denotes the image of $u \in H_U^0(U \times U,\cH_U)\simeq {\rm End}_U(\cF)$ under the trace map ${\rm End}_U(\cF) \longrightarrow \La.$
\end{lemma}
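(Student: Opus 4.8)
The plan is to read off the statement from the long exact sequence \eqref{exact}, which is available precisely in the diagonal situation $C=\delta_U(U)$: here $\bar C=\delta_X(X)$, the closed immersion $i_1\colon \bar C\cap X\to X$ is the identity, $U\cap C=U$, $j'=j$, $g_{\bar C}=g_X$, and the left-hand term of the triangle preceding \eqref{exact} becomes $j_!\La_U$. Write $\beta$ for the map in the statement, $\partial\colon H^0_S(X,j_!\cK_U\otimes\delta_X^\ast R{g_X}_\ast\La)\to H^1_S(X,j_!\La_U)$ for the connecting map that follows it in \eqref{exact}, and set $\Delta:=C_{S,!}^0(j_!\cF,j_!u)-{\rm Tr}(u)\cdot C_{S,!}^0(j_!\La_U)$. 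By exactness ${\rm im}\,\beta=\ker\partial$, so it suffices to prove that $\beta$ is injective and that $\partial\Delta=0$.

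For injectivity I would note that, again by \eqref{exact}, $\ker\beta$ is the image of $H^0_S(X,j_!\La_U)$, and this group vanishes. Indeed $H^{-1}(U,\La_U)=0$ and $H^0(X,j_!\La_U)=0$, the latter because $U$ is dense and $S\neq\emptyset$, so that $j_!\La_U$ has no nonzero global section; the localization sequence then forces $H^0_S(X,j_!\La_U)=0$. The same sequence shows that the boundary map $\iota\colon H^0(U,\La_U)\to H^1_S(X,j_!\La_U)$ is injective, which I will use at the end.

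The core is the vanishing $\partial\Delta=0$. First I would record the compatibility of the refined localized characteristic class with the refined characteristic class $C_!$ of subsection 2.2: by the construction of ${\rm loc}_{X,C,\cF}$, which tensors by $\La\to R{g_{\bar C}}_\ast\La$ and then transports the support by the isomorphism of Lemma \ref{lll1}, enlarging the support from $\bar C\cap S=S$ to $\bar C\cap X=X$ carries $C_{S,!}^0(j_!\cF,j_!u)$ to $b_\ast C_!(j_!\cF,j_!u)$, where $b$ is induced by $\La\to R{g_X}_\ast\La$, and similarly for $\La_U$. Since enlarging support commutes with connecting maps and $b_\ast$ lands in the kernel of the full-support connecting map, it follows that the image of $\partial\Delta$ in $H^1(X,j_!\La_U)$ vanishes, i.e.\ $\partial\Delta$ lies in $\ker(H^1_S\to H^1)={\rm im}\,\iota$. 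Hence $\partial\Delta=\iota(\lambda)$ for a unique $\lambda\in H^0(U,\La_U)$, and it remains to see $\lambda=0$. The scalar $\lambda$ is the generic term of $\Delta$: composing $\partial$ with $\iota^{-1}$ evaluates, over the generic point of $U$, the naive local term of the underlying cohomological correspondence. For a smooth sheaf this local term is the generic rank for the identity and, more generally, the trace ${\rm Tr}(u)$ for an endomorphism $u$---this is the standard fact that $\delta_U^\ast$ followed by the evaluation map \eqref{delta2} sends the correspondence attached to $u$ to ${\rm Tr}(u)$. Therefore the generic term of $C_{S,!}^0(j_!\cF,j_!u)$ is ${\rm Tr}(u)$ and that of ${\rm Tr}(u)\cdot C_{S,!}^0(j_!\La_U)$ is ${\rm Tr}(u)\cdot 1$, so $\lambda=0$; by injectivity of $\iota$ this gives $\partial\Delta=0$ and hence $\Delta\in{\rm im}\,\beta$.

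The main obstacle is the last step: verifying that $\iota^{-1}\partial$ genuinely computes the generic local term, and that this local term equals ${\rm Tr}(u)$ for the smooth sheaf $\cF$ with endomorphism $u$. The rest is formal diagram chasing in localization sequences, but this point is where the smoothness of $\cF$ on $U$ and the trace map enter in an essential way, and it must be checked by restricting the construction to a neighborhood of the generic point of $U$.
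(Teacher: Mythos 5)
The paper's own ``proof'' of this lemma is a one-line citation of \cite[Lemma 5.2.4.1]{AS}, and your skeleton is the right one and is surely the intended argument: reduce, via the exact sequence \eqref{exact}, to showing that the connecting map kills $\Delta=C_{S,!}^0(j_!\cF,j_!u)-{\rm Tr}(u)\cdot C_{S,!}^0(j_!\La_U)$; observe that $\partial\Delta$ dies in $H^1(X,j_!\La_U)$ because the image of $\Delta$ under support--enlargement is the image of the difference of the refined characteristic classes of subsection 2.2 under the canonical map $j_!\cK_U\to j_!\cK_U\otimes\delta_X^\ast Rg_{X\ast}\La$; conclude that $\partial\Delta=\iota(\lambda)$ for some $\lambda\in H^0(U,\La_U)$ and identify $\lambda$ with ${\rm Tr}(u)-{\rm Tr}(u)\cdot 1=0$. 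The first two steps are correct as you state them.

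The gap is in the last step, which you yourself flag as the main obstacle: the identification of $\lambda$ is asserted, and the verification you propose (``restricting the construction to a neighborhood of the generic point of $U$'') cannot work as stated. Everything with support in $S$ restricts to zero on $U$, so $\lambda$ is invisible in any restriction of $\Delta$; and if you instead try to detect $\lambda$ through the restriction to $U$ of the lift $C_!(j_!\cF,j_!u)-{\rm Tr}(u)C_!(j_!\La_U)\in H^0(X,j_!\cK_U)$, you are relying on the first map $H^0(U,\La_U)\to H^0(U,\cK_U)$ of the triangle \eqref{exact} restricted to $U$, which is cup product with the self-intersection class of the diagonal and is far from injective (it is the zero map whenever $U$ is affine of positive dimension). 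The identification has to be carried out one level up, on $X\times X$: tensor the purity triangle $\delta_{X\ast}R\delta_X^!\La\to\La\to Rg_{X\ast}\La$ with $\bar{\cH}$ and run the $3\times 3$ localization diagram for the supports $\delta(S)\subset\delta(X)\supset\delta(U)$ \emph{before} applying $e\cdot\delta_X^\ast$. What makes this work is precisely Lemma \ref{lll1}: since $H^i_{\delta(U)}(U\times U,\cH_U\otimes Rg_{U\ast}\La)=0$ for all $i$, the map $H^0_{\delta(U)}(U\times U,\cH_U\otimes\delta_{U\ast}R\delta_U^!\La)\to H^0_{\delta(U)}(U\times U,\cH_U)$ is an isomorphism, so $u$ admits a canonical lift $\tilde{u}$, identified with $u\in{\rm End}_U(\cF)$, and the diagram chase gives $\partial\bigl(C_{S,!}^0(j_!\cF,j_!u)\bigr)=\pm\,\iota(e(\tilde{u}))=\pm\,\iota({\rm Tr}(u))$; subtracting ${\rm Tr}(u)$ times the same identity for $\La_U$ yields $\partial\Delta=0$. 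This is where the smoothness of $\cF$ actually enters (through Lemma \ref{lll1}), not through a computation at the generic point; without this piece your argument does not pin down $\lambda$ at all.
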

\begin{proof}This is proved in the same way as in \cite[Section 5, Lemma 5.2.4.1]{AS}.
\end{proof}
\begin{definition}\label{aho1}
Let the notation and the assumption be as in Lemma \ref{lem:1}.
We call the element
in $H_S^0(X,{j}_!\cK_U)$ lifting the difference
$C_{S,!}^0({j}_! \cF,j_!u)-{\rm Tr}(u) \cdot
C_{S,!}^0({j}_! \La_U) \in H_{S}^0(X, \juq \cK_U \otimes \delta_X^\ast {Rg_X}_\ast \La)$ by Lemma \ref{lem:1}
{\it the refined
localized characteristic class of ${j}_!\cF$ }
and denote it by $C_{S,!}^{00}({j}_! \cF,{j}_!u).$
We call the image
of the element $C_{S,!}^{00}({j}_!
\cF,{j}_!u)$ under the canonical map $H_{S}^0(X, {j}_!\\{\cK}_U)
\longrightarrow H_{S}^0(X,\cK_X)$ {\it the localized characteristic
class of ${j}_!\cF$} and denote it by $C_{S}^{00}({j}_! \cF,{j}_!u)$.
\end{definition}
\begin{remark}\label{aho2}
In the case where $X$
is smooth and $C=\delta_U(U),$
by the exact sequence similar 
to (\ref{exact}) and the purity theorem, we have an isomorphism
$H_S^0(X,\cK_X) \simeq H_S^0(X,\cK_X \otimes \delta_X^\ast {Rg_X}_\ast \La).$
Hence we obtain the localized characteristic class 
$C_S^0({j}_!\cF)$ in $H_S^0(X,\cK_X)$
without taking the difference. (c.f.\ \cite[Definition 5.2.1]{AS}.)
The class 
$C_{S,!}^{00}({j}_! \cF,j_!u) \in H_S^0(X,j_!\cK_U)$
in Definition \ref{aho1} goes to the difference
$C_S^0(j_!\cF,j_!u)-{\rm Tr}(u) \cdot C_S^0(j_!\La) \in H_S^0(X,\cK_X)$
by the canonical map $H_S^0(X,j_!\cK_U)
\longrightarrow H_S^0(X,\cK_X).$
\end{remark}

\subsection{Logarithmic localized characteristic class}
In this subsection, for a $\La$-sheaf, we will define a cohomology class with support
on its wild locus by killing its tame ramification,
which we call the logarithmic localized characteristic class. 
We defined the localized characteristic class as a cohomology class
with support on the boundary locus in subsection 3.1.
To kill the tame ramification, we use logarithmic blow-up.
For a smooth sheaf of rank 1, we introduce a more elementary definition of the logarithmic localized characteristic class
 in \cite[Definition 2.4]{T}.

Let $X$ be a smooth scheme of dimension $d$ over $k,$
$U \subset X$ an open subscheme. 
We assume that the complement $X \backslash U=\bigcup_{i \in I}D_i$
is a divisor with simple normal crossings.
Let
$$(X \times X) \widetilde{} \subset (X \times X)'$$
denote the log product and the log blow-up
with respect to divisors $\{D_i\}_{i \in I}$
defined in \cite[subsection 2.2]{AS}
and \cite[subsection 1.1]{KS}.
\begin{lemma}\label{geom}
Let the notation be as above.
We consider the following cartesian diagram
\[\xymatrix{
X' \ar[r]^{\!\!\!\!\!\!\!\!\!\!\!i'}\ar[d] &
(X \times X)'  \ar[d]^{f} \\
X \ar[r]^{\!\!\!\!\!\!\delta} &
X \times X
}
\]where $f:(X \times X)' \longrightarrow X \times X$
is the projection and $\delta:X \longrightarrow X \times X$
is the diagonal closed immersion.
Then $X'$ is the union of the diagonal $X$
and $(\mathbb{P}^1)^{\sharp{J}}$-bundles
for a subset $\phi \neq J \subset I$
over $D_J$
where
$D_J$
is the intersection of $\{D_i\}_{i \in J}$ in $X.$
\end{lemma}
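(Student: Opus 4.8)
The statement is étale-local on $X$ and of a purely toric nature, so the plan is to reduce it to an explicit computation in local coordinates adapted to the simple normal crossings divisor, and then to glue the resulting local pictures.

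First I would fix a closed point of $X$ lying on exactly the divisors $D_i$ with $i$ in a finite subset $J_0\subset I$, and choose étale coordinates $t_1,\dots,t_r,z_1,\dots,z_{d-r}$ with $r=\sharp J_0$ so that $D_i=\{t_i=0\}$ for $i\in J_0$. Writing $x_i=t_i\otimes 1$ and $y_i=1\otimes t_i$ for the two pullbacks on $X\times X$, one has $(D_i\times X)\cap(X\times D_i)=D_i\times_k D_i=V(x_i,y_i)$, and by the construction recalled from \cite{AS} and \cite{KS} the log blow-up $(X\times X)'$ is, locally, the blow-up of $X\times X$ along the union of the smooth codimension-two centres $D_i\times_k D_i$, $i\in J_0$. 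For distinct $i$ these ideals involve disjoint variables, so the blow-ups commute and in the toric coordinates $(X\times X)'$ is the product $\prod_{i\in J_0}\mathrm{Bl}_0\mathbb{A}^2_{(x_i,y_i)}$ times the affine $z$-space; in particular it is smooth, and the log product $(X\times X)^{\sim}$ is exactly the open locus where each exceptional coordinate $u_i=y_i/x_i$ is a unit (the complement of the strict transforms of $D_i\times X$ and $X\times D_i$).

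Next I would compute $X'=X\times_{X\times X}(X\times X)'$ by pulling back along the diagonal, which in these coordinates is the locus $\{x_i=y_i:i\in J_0\}$. Since everything factors as a product over $i$, the fibre product splits locally as $\prod_{i\in J_0}B_i$ (times the affine $z$-space), where $B_i=\{x_i=y_i\}\times_{\mathbb{A}^2}\mathrm{Bl}_0\mathbb{A}^2_{(x_i,y_i)}$. On the chart $y_i=u_ix_i$ this $B_i$ is cut out by the squarefree equation $x_i(u_i-1)=0$, hence is the reduced union of the strict transform $\tilde L_i=\{u_i=1\}\cong\mathbb{A}^1$ of the diagonal line and the exceptional fibre $\mathbb{P}^1_i=\{x_i=0\}$ over the origin $t_i=0$, meeting transversally at $u_i=1$. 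Expanding $\prod_i B_i$, its irreducible components are indexed by the subsets $J\subseteq J_0$ recording the factors for which one takes $\mathbb{P}^1_i$: the component attached to $J$ is a product of $\sharp J$ copies of $\mathbb{P}^1$ fibred over $\{t_i=0:i\in J\}=D_J$, i.e. a $(\mathbb{P}^1)^{\sharp J}$-bundle over $D_J$, while $J=\emptyset$ gives $\prod_i\tilde L_i$, which is the lift of the diagonal (the locus $u_i=1$ inside the log product).

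Finally, running over all charts and all points and gluing, the $J=\emptyset$ strata assemble into the diagonal $X$ and, for each fixed $\emptyset\neq J\subset I$, the exceptional $\mathbb{P}^1$-factors assemble into a $(\mathbb{P}^1)^{\sharp J}$-bundle over $D_J$, namely the fibrewise projectivization compactifying the $\mathbb{G}_m$-directions $u_i$. The coordinate computation and the reducedness check are routine; I expect the main obstacle to be this globalization step, where one must verify that the local product descriptions are independent of the chosen coordinates, that they glue to honest bundles over the smooth strata $D_J$, and that the lift of the diagonal through the log product is globally the $J=\emptyset$ stratum — for which the compatibility of the log blow-up with restriction to the strata $D_J$, as set up in \cite{AS} and \cite{KS}, is used.
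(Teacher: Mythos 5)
Your proposal is correct and follows essentially the same route as the paper: both exploit that $(X\times X)'$ is the fibre product over $X\times X$ of the single blow-ups $(X\times X)'_i$ along $D_i\times D_i$, so that $X'$ is the fibre product over $X$ of the pullbacks $X'_i$, each of which is the union of the diagonal and a $\mathbb{P}^1$-bundle over $D_i$, and then both expand this product over subsets $J\subset I$. The only difference is that you establish the one-divisor case and the product structure in explicit local coordinates and then glue, whereas the paper invokes the global fibre-product decomposition of the log blow-up directly, which renders your final globalization step unnecessary.
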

\begin{proof}
For $i \in I,$
we define $(X \times X)'_i$
to be the blow up of $X \times X$
along the closed subscheme
$D_i \times D_i \subset X \times X.$
Let $X'_i$
denote the inverse image of the diagonal $X$
by the projection $(X \times X)'_i 
\longrightarrow X \times X$ for $i \in I.$
By the definition of the log blow-up,
$X'_i$ is the union of the diagonal 
$X \subset (X \times X)'_i$
and a $\mathbb{P}^1$-bundle over $D_i$.
Since $(X \times X)'$
is the fiber product of $(X \times X)'_i$
($i \in I$)
over $X \times X$, $X'$
is the fiber product of the schemes $\{X'_i\}_{i \in I}$
($i \in I$) over $X.$
Therefore $X'$
is the union of the diagonal $X$ and $(\mathbb{P}^1)^{\sharp{J}}$-bundles
for $\phi \neq J \subset I$
over $D_J$
where
$D_J$
is the intersection of $\{D_i\}_{i \in J}$ in $X.$
Hence the assertion follows.
\end{proof}

We consider the following situation. Let $X$ be a scheme of
dimension $d$ over $k.$ Let $U
\subset V$ be
 open subschemes of $X,$ and $S=X \backslash V$
and $D \cup S=X \backslash U$ the complements respectively.
We assume that $V$ is smooth over $k,$
$D$ is a Cartier divisor and $D \cap V \subset V$ is 
a divisor with simple normal crossings. 
Let $j:U \longrightarrow X,$ $j_V:V \longrightarrow X$
and $j':U \longrightarrow V$
denote the open immersions. Let $U'$ be the complement
of $D:=\bigcup_{i \in I}D_i$ in $X.$
We have $U=V \cap U'.$
Let 
$(X \times X) \widetilde{} \subset (X \times X)'$
and $(V \times V) \widetilde{} \subset (V \times V)'$
denote the log products and the log blow-ups with respect to
$\{D_i\}_{i \in I}$ and $\{D_i \cap V\}_{i \in I}$
respectively.

We
consider the following commutative diagram
\[\xymatrix{
(V \times V)' \ar[d]_{f_V} & (U \times V)' \ar[l]_{\widetilde{j_1}} &
(V \times V) \widetilde{} \ar[l]_{\widetilde{k}_1} \\
V \times V & U \times V \ar[l]\ar[u] &
U \times U \ar[l]\ar[u]_{\widetilde{j}} 
\\}
\]
where $(U \times V)' \subset (V \times V)'$ is the open subscheme
which is the complement of the union of the proper transforms
of $(D_i \cap V) \times V$
 for all $i \in I$.(c.f.\ \cite[Section 2.2]{AS}.)

We consider the cartesian diagram
\begin{align}\label{f1}
\xymatrix{ (X \times X)' \ar[d]_{f} & (V \times X)'
\ar[l]_{\bar{j'_1}}\ar[d]^{f_1} &
(V \times V)' \ar[l]_{\bar{k'_1}}\ar[d]^{f_V} \\
X \times X & V \times X \ar[l]_{j_1} &
V \times V \ar[l]_{k_1}
\\}
\end{align}
where the horizontal arrows are the open immersions
and the vertical arrows are the projections.

Let $\cF$ be a smooth $\La$-sheaf on $U$ which is tamely
ramified along $V \backslash U=V \cap D$. 
We put $\cH_0:=\mathcal{H}om({\rm pr}_2^
\ast \cF,{\rm pr}_1^\ast \cF)$ on $U \times U$ and
$\bar{\cH}:=R\mathcal{H}om({\rm pr}_2 ^\ast j_!\cF,R{\rm pr}_1 ^! j_!\cF)$ on $X
\times X$ respectively.

We recall a construction defined in loc.\ cit. 
We put $\bar{\cH}_V:=R\mathcal{H}om({\rm pr}_2^\ast j'_!\cF,R{\rm pr}_1^!j'_!\cF)$
on $V \times V,$
$\widetilde{\cH}_V:=({\widetilde{j}}_\ast \cH_0)(d)[2d]$
on $(V \times V) \widetilde{}$ and
$\bar{\cH}'_{V}:={\widetilde{j_1}}_! {R\widetilde{k}_1}_\ast
({\widetilde{j}}_\ast \cH_0)(d)[2d]$ 
 on $(V \times V)'$ respectively.
There exists a unique
map 
\begin{align}\label{pul}
f_{V}^\ast \bar{\mathcal{H}}_V
\longrightarrow
\bar{\cH}'_{V}
\end{align}
inducing the canonical isomorphism
$R\mathcal{H}om({\rm pr}_2^\ast \cF,R{\rm pr}_1^!\cF)
\longrightarrow \cH_0(d)[2d]$ on $U \times U$
by \cite[the proof of Proposition 3.1.1.1]{S}.

We put $\bar{\cH'}:=\bar{j'_1}_! R\bar{k'_1}_\ast \bar{\cH}'_{V}$ on
$(X \times X)'.$
We define a map
\begin{equation}\label{11}
f^\ast \bar{\mathcal{H}} \longrightarrow \bar{\cH'}
\end{equation}
to be the composite of the following maps
$$f^\ast \bar{\mathcal{H}} \simeq
f^\ast {j_1}_!{R{k_1}}_\ast \bar{\cH}_V \simeq
{\bar{j'_1}}_! f_1^\ast {R{k_1}}_\ast \bar{\cH}_V
\longrightarrow
{\bar{j'_1}}_!{R{\bar{k'_1}}}_\ast f_{V}^\ast \bar{\mathcal{H}}_V
\longrightarrow
{\bar{j'_1}}_!{R{\bar{k'_1}}}_\ast \bar{\cH}'_{V}=\bar{\cH'}$$
where the first map
is induced by the Kunneth formula
and the second and third maps are induced by
the base change maps
$f^\ast {j_1}_! \simeq {\bar{j'_1}}_!f_1^\ast $ and 
$f_1^\ast {R{k_1}}_\ast \longrightarrow {R{\bar{k'_1}}}_\ast f_{V}^\ast $
and the fourth map is induced by the map (\ref{pul}).
\begin{lemma}\label{5}
Let the notation be as above. Then the adjunction of the map (\ref{11})
$\bar{\cH} \longrightarrow Rf_\ast \bar{\cH'}$ is an isomorphism.
\end{lemma}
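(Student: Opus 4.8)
The plan is to prove the asserted isomorphism $\bar{\cH} \longrightarrow Rf_\ast \bar{\cH'}$ by computing the right-hand side directly through the two squares of the cartesian diagram \eqref{f1}, thereby reducing the statement over $X \times X$ to the corresponding statement over $V \times V$, which is the content of \cite{S}. The guiding observation is that, although \eqref{11} is assembled from a chain of base-change maps together with the map \eqref{pul}, passing to $Rf_\ast$ converts each base-change map into a plain functoriality isomorphism, so that no delicate vanishing is needed away from the smooth locus $V$.

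First I would unwind $Rf_\ast \bar{\cH'}$ using $\bar{\cH'}={\bar{j'_1}}_! R{\bar{k'_1}}_\ast \bar{\cH}'_V$. Since $f$ is the proper log blow-up projection and $f_1$ is its base change along the open immersion $j_1$, both $f$ and $f_1$ are proper, so $Rf_\ast=Rf_!$ and $R{f_1}_\ast=R{f_1}_!$. Because $\bar{j'_1}$ and $j_1$ are open immersions and $f\circ\bar{j'_1}=j_1\circ f_1$ (the left square commutes), functoriality of $R(-)_!$ gives
$$Rf_\ast {\bar{j'_1}}_! \simeq {j_1}_!\, R{f_1}_\ast.$$
For the right square, $f_1\circ\bar{k'_1}=k_1\circ f_V$, so functoriality of $R(-)_\ast$ yields $R{f_1}_\ast R{\bar{k'_1}}_\ast \simeq R{k_1}_\ast R{f_V}_\ast$. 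Composing these identities,
$$Rf_\ast \bar{\cH'} \simeq {j_1}_! R{k_1}_\ast R{f_V}_\ast \bar{\cH}'_V.$$

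Next I would invoke the corresponding statement over $V \times V$: the adjunction $\bar{\cH}_V \longrightarrow R{f_V}_\ast \bar{\cH}'_V$ of the map \eqref{pul} is an isomorphism. This is the content of \cite[Proposition 3.1.1.1]{S}, and it is here that the hypothesis that $\cF$ is tamely ramified along $V \cap D$ enters: on the fibers of $f_V$ over the diagonal, which by Lemma \ref{geom} applied to $V$ are $(\mathbb{P}^1)^{\sharp J}$-bundles over the strata $D_J \cap V$, the higher direct images of $\bar{\cH}'_V$ are governed by tame cohomology along these bundles, and tameness makes them collapse onto $\bar{\cH}_V$. Combining this isomorphism with the Kunneth identification $\bar{\cH} \simeq {j_1}_! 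R{k_1}_\ast \bar{\cH}_V$ used in the very definition of \eqref{11}, and applying the exact functors ${j_1}_!$ and $R{k_1}_\ast$, I obtain the desired isomorphism $\bar{\cH} \simeq Rf_\ast \bar{\cH'}$.

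The remaining, and main, obstacle is to check that the composite isomorphism just produced is genuinely the adjunction of \eqref{11}, rather than merely some abstract isomorphism with the same source and target. Since \eqref{11} was built from the Kunneth isomorphism, the base-change maps $f^\ast {j_1}_! \simeq {\bar{j'_1}}_! f_1^\ast$ and $f_1^\ast R{k_1}_\ast \longrightarrow R{\bar{k'_1}}_\ast f_V^\ast$, and the map \eqref{pul}, I would verify by a diagram chase that the $(f^\ast,Rf_\ast)$-adjoint of this composite corresponds, under the two functoriality isomorphisms displayed above, to ${j_1}_! R{k_1}_\ast$ applied to the $(f_V^\ast,R{f_V}_\ast)$-adjoint of \eqref{pul}. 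The relevant inputs are the standard mate identities relating these base-change maps to the units and counits of the proper adjunctions. Once this identification is established, the fact that ${j_1}_!$ and $R{k_1}_\ast$ preserve isomorphisms, together with \cite[Proposition 3.1.1.1]{S}, completes the proof.
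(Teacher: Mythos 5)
Your proposal is correct and follows essentially the same route as the paper: reduce to the known isomorphism $\bar{\cH}_V \longrightarrow R{f_V}_\ast \bar{\cH}'_V$ over $V \times V$ and then transport it through ${j_1}_!R{k_1}_\ast$ using the properness of $f$, the cartesian diagram \eqref{f1}, and the definition $\bar{\cH'}=\bar{j'_1}_!R\bar{k'_1}_\ast\bar{\cH}'_V$ (you even supply the compatibility-of-adjoints check that the paper leaves implicit). The only quibble is attribution: the paper takes the isomorphism over $V\times V$ from \cite[Lemma 2.2.4]{AS}, while \cite[Proposition 3.1.1.1]{S} is cited only for the construction of the map \eqref{pul}.
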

\begin{proof}
By \cite[lemma 2.2.4]{AS}, the adjunction $
\bar{\cH}_V \longrightarrow
R{f_V}_\ast \bar{\cH}'_{V}$ is an isomorphism. Since $f$ is proper, the assertion follows
from the cartesian diagram (\ref{f1}) and the definition of $\bar{\cH'}$.
\end{proof}

We consider the following cartesian diagram
\[\xymatrix{
X' \ar[r]^{\!\!\!\!\!\!\!\!i'}\ar[d]^{f'} &
(X \times X)' \ar[d]^{f} & (X \times X)' \backslash X' \ar[l]_{g'}\ar[d]\\
X \ar[r]^{\!\!\!\!\!\!\!\!\delta_X} & X \times X & X \times X \backslash \delta_X(X) \ar[l]_{\!\!\!\!\!\!\!g_X}
\\}
\]where $f:(X \times X)' \longrightarrow X \times X$
is the projection,
$i':X'
\longrightarrow (X \times X)'$ is the closed immersion and
$g':(X \times X)' \backslash X' \longrightarrow (X
\times X)'$ is the open immersion. 
Let
$\delta':X \longrightarrow (X \times X)',$ 
$\widetilde{\delta}_{V}:V \longrightarrow (V \times V) \widetilde{}$
and $\delta'_V:V \longrightarrow (V \times V)'$
 be the
logarithmic diagonal closed immersions induced by the universality of blow-up. 
Let $V'$ denote the intersection $X' \cap (V \times V)'$ in $(X \times X)'.$
Let $g'_V:(V \times V)' \backslash V' \longrightarrow (V \times V)'$
be the open immersion.

We define an evaluation map. The composite of the canonical isomorphism
${\delta'}^\ast
\bar{\cH'} \simeq {j_V}_! {\widetilde{\delta}_{V}}^\ast
(\widetilde{j}_\ast \cH_0)(d)[2d]$ and an evaluation map
${j_V}_!e:{j_V}_!{\widetilde{\delta}_{V}}^\ast (\widetilde{j}_\ast
\cH_0)(d)[2d] \longrightarrow {j_V}_!\La_V(d)[2d]={j_V}_!
\cK_V$(\cite[(2.9)]{AS}) induces 
an evaluation map
\begin{align}\label{6.2}
e':{\delta'}^\ast \bar{\cH'} \longrightarrow {j_V}_!
\cK_V.
\end{align}
The map (\ref{11}) induces the pull-back
\begin{align}\label{zz}
f^\ast:H_X^0(X \times X,\bar{\cH}) \longrightarrow H_{X'}^0((X
\times X)',\bar{\cH'}).
\end{align}
The canonical map $\La \longrightarrow Rg'_\ast\La$
induces a map
$$H_{X'}^0((X \times X)',\bar{\cH'})
\longrightarrow H_{X'}^0((X \times X)',\bar{\cH'}
\otimes Rg'_\ast\La).$$

\begin{lemma}\label{7}
Let the notation be as above. Then the canonical map
$$H_{X' \backslash V'}^0((X \times X)',\bar{\cH'}
\otimes Rg'_\ast\La) \longrightarrow H_{X'}^0((X \times
X)',\bar{\cH'} \otimes Rg'_\ast\La)$$ is injective.
\end{lemma}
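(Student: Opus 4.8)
The plan is to imitate the proof of Lemma \ref{lll1}: I would use a localization sequence to reduce the injectivity to the vanishing of a single cohomology group supported on the open part $V'$, and then kill that group by combining the smoothness of the Hom-sheaf on the log product with the formal identity $Ri^!Rg_\ast=0$.

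First I set up the localization. Since $(V\times V)'$ is open in $(X\times X)'$ and $V'=X'\cap(V\times V)'$, the locus $V'$ is open in $X'$ and its complement $Z:=X'\backslash V'$ is closed in $X'$, hence closed in $(X\times X)'$. Put $W_0:=(X\times X)'\backslash Z$, so that $V'=X'\cap W_0$ is closed in $W_0$ and the restriction $g'_0\colon W_0\backslash V'\longrightarrow W_0$ of $g'$ is the open immersion of the complement of $V'$; open base change along $W_0\hookrightarrow(X\times X)'$ gives $(Rg'_\ast\La)|_{W_0}\simeq Rg'_{0\ast}\La$. The localization triangle for $Z\hookrightarrow X'$ then yields an exact sequence
\begin{equation*}
H^{-1}_{V'}(W_0,\bar{\cH'}\otimes Rg'_{0\ast}\La)\longrightarrow H^0_{Z}((X\times X)',\bar{\cH'}\otimes Rg'_\ast\La)\longrightarrow H^0_{X'}((X\times X)',\bar{\cH'}\otimes Rg'_\ast\La),
\end{equation*}
so it suffices to prove $H^{-1}_{V'}(W_0,\bar{\cH'}\otimes Rg'_{0\ast}\La)=0$, that is, the vanishing of $H^{-1}(V',Ri_{V'}^!(\bar{\cH'}\otimes Rg'_{0\ast}\La))$, where $i_{V'}\colon V'\longrightarrow W_0$ is the closed immersion.

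The essential input is the tameness of $\cF$ along $V\cap D$: by \cite[Lemma 2.2.4]{AS} the sheaf $\widetilde{j}_\ast\cH_0$ is smooth on the log product $(V\times V)\widetilde{}$, so $\bar{\cH'}$ restricts to the smooth complex $(\widetilde{j}_\ast\cH_0)(d)[2d]$ on $(V\times V)\widetilde{}$. Applying Lemma \ref{geom} to $V$ I would stratify $V'$ into the open stratum $V'_0:=V'\cap(V\times V)\widetilde{}$, consisting of the log diagonal together with the interiors of the exceptional $(\mathbb{P}^1)^{\sharp J}$-bundles over $D_J\cap V$, and the closed stratum $V'_1:=V'\backslash(V\times V)\widetilde{}$, lying on the proper transforms removed in forming the log product. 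On the open neighbourhood $\Omega_0:=(V\times V)\widetilde{}\cap W_0$ of $V'_0$ in $W_0$ the complex $\bar{\cH'}$ is smooth, and $V'_0=V'\cap\Omega_0$ is closed in $\Omega_0$; hence the projection formula gives $\bar{\cH'}\otimes Rg'_{0\ast}\La\simeq Rg'_{0\ast}(g'_0)^\ast\bar{\cH'}$ there, and since $Ri^!Rg'_{0\ast}=0$ for the complementary pair we obtain $Ri_{V'_0}^!(\bar{\cH'}\otimes Rg'_{0\ast}\La)=0$ in every degree. Feeding this into the localization triangle for $V'_1\hookrightarrow V'$ shows that the open stratum contributes nothing, so $H^\bullet(V',Ri_{V'}^!(\bar{\cH'}\otimes Rg'_{0\ast}\La))\simeq H^\bullet_{V'_1}(V',Ri_{V'}^!(\bar{\cH'}\otimes Rg'_{0\ast}\La))$, and it remains only to check that $H^{-1}_{V'_1}$ vanishes.

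The main obstacle is precisely this contribution of the closed stratum $V'_1$. There $\bar{\cH'}$ is governed by the interplay of the zero-extension $\widetilde{j_1}_!$ across the proper transforms of the $(D_i\cap V)\times V$ and the push-forward $R\widetilde{k}_{1\ast}$ across those of the $V\times(D_i\cap V)$, so it is in general neither smooth nor zero on a neighbourhood of $V'_1$ in $W_0$. Consequently the clean projection-formula argument cannot be run globally along $V'$, and one should expect only injectivity, not an isomorphism. To extract the required degree $-1$ vanishing I would compute $Ri_{V'}^!(\bar{\cH'}\otimes Rg'_{0\ast}\La)$ locally near $V'_1$ using the explicit $(\mathbb{P}^1)^{\sharp J}$-bundle description of Lemma \ref{geom} and the compatibility of $\widetilde{j_1}_!$ and $R\widetilde{k}_{1\ast}$ with the logarithmic diagonal, verifying that the boundary map $H^{-1}_{V'}(W_0,\bar{\cH'}\otimes Rg'_{0\ast}\La)\longrightarrow H^0_{Z}((X\times X)',\bar{\cH'}\otimes Rg'_\ast\La)$ is zero. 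This local analysis at the meeting of the exceptional bundles with the proper transforms is the delicate heart of the lemma, and is exactly the point at which the earlier argument had to be corrected.
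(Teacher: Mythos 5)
Your reduction is sound as far as it goes, and its first half genuinely overlaps with the paper's argument: the paper likewise uses the localization sequence to reduce injectivity to the vanishing $H_{V'}^{-1}((V\times V)',\bar{\cH'}_V\otimes R{g'_V}_\ast\La)=0$, and likewise disposes of an open stratum by the projection formula for a smooth Hom-complex together with $Ri^!Rg_\ast=0$ (the paper excises only the diagonal copy of $U$, where smoothness of $\cF$ suffices; your larger excision of $V'\cap(V\times V)\widetilde{}\,$ using tameness is a legitimate variant). Your observation that one can only expect injectivity, not an isomorphism, is also correct.

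The genuine gap is that you stop exactly where the lemma becomes hard. The remaining vanishing at the closed stratum $V'_1=V'\setminus(V\times V)\widetilde{}\,$ — the locus where the exceptional $(\mathbb{P}^1)^{\sharp J}$-bundles meet the proper transforms removed in forming the log product — is not an afterthought to be "verified by a local analysis"; it is the content of the lemma, and your proposal contains no argument for it, only a description of what such an argument would have to address. The paper's proof of this step is a specific chain of reductions with no analogue in your text: after reducing to $V=X$, it uses the distinguished triangle ${i'}_\ast({i'}^\ast\bar{\cH'}\otimes R{i'}^!\La)\to\bar{\cH'}\to\bar{\cH'}\otimes Rg'_\ast\La\to{}$ together with the vanishing $H_P^i((X\times X)',\bar{\cH'})\simeq H^i_{X\setminus U}(X\times X,\bar{\cH})=0$ (from $Rf_\ast\bar{\cH'}\simeq\bar{\cH}$) to convert the sought $H^{-1}$ with support into $H_P^0(X',{i'}^\ast\bar{\cH'}\otimes R{i'}^!\La)$; it then applies a Mayer--Vietoris sequence for the closed cover $X'=\delta'(X)\cup P$ with $\delta'(X)\cap P=D$, kills the terms on $D$ and on the log diagonal by the purity theorem and \cite[Corollary 2.21(3)]{S}, and finally proves acyclicity of $R{f_J}_\ast(\bar{\cH'}|_{P_J})$ by reducing \'etale-locally to a tensor product of rank-one factors, applying the K\"unneth formula to reach the smooth-divisor case, and invoking proper base change with $Rf_\ast\bar{\cH'}\simeq\bar{\cH}$. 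Without some version of this (or another complete argument), your proposal does not establish that the boundary map is zero, so the injectivity is not proved. As the acknowledgements indicate, this is precisely the step whose earlier treatment was erroneous, so a hand-wave here cannot be accepted.
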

\begin{proof}
By the localization sequence, it suffices to prove
$H_{V'}^{-1}((V \times V)',{\bar{\cH'}}_V \otimes {R{g'_V}}_\ast \La)=0.$
In the following, we may assume that $V=X$ and
$\mathcal{F}$ is tamely ramified along the boundary $X \backslash U=D$
which is a divisor with simple normal crossings.
We will prove the vanishing $H_{X'}^{-1}((X \times X)',\bar{\cH'} \otimes Rg'_\ast\La)=0.$
Let $P$ denote the closed subscheme $X' \backslash U$ of $(X \times X)'.$
By a similar argument to the proof of Lemma \ref{lll1},
we obtain an isomorphism
$H_{P}^{-1}((X \times X)',\bar{\cH'} \otimes Rg'_\ast\La)
\simeq 
H_{X'}^{-1}((X \times X)',{\bar{\cH'}} \otimes Rg'_\ast\La).$
We have a distinguished triangle
$${i'}_\ast ({i'}^\ast {\bar{\cH'}} \otimes {R{i'}}^!\La)
\longrightarrow
{\bar{\cH'}}
\longrightarrow
{\bar{\cH'}} \otimes Rg'_\ast\La
\longrightarrow
$$
and hence a long exact sequence
$$H_{P}^{-1}((X \times X)',{\bar{\cH'}})
\longrightarrow
H_{P}^{-1}((X \times X)',{\bar{\cH'}} \otimes Rg'_\ast\La)
\longrightarrow
H_{P}^0(X',{i'}^\ast {\bar{\cH'}} \otimes {R{i'}}^!\La)
\longrightarrow
$$
$H_{P}^{0}((X \times X)',{\bar{\cH'}}) \longrightarrow \cdots.$

Because we have
$H_{P}^{i}((X \times X)',{\bar{\cH'}}) \simeq
H_{X \backslash U}^{i}(X \times X,{\bar{\cH}}) \simeq 0$
for all $i$ by the isomorphism ${R{f}}_\ast {\bar{\cH'}} \simeq {\bar{\cH}}$ proved in \cite[Lemma 2.2.4]{AS},
we obtain an isomorphism
$H_{P}^{-1}((X \times X)',{\bar{\cH'}} \otimes Rg'_\ast\La)
\simeq
H_{P}^0(X',{i'}^\ast {\bar{\cH'}} \otimes {R{i'}}^!\La)$
by the above long exact sequence.

Since we have $X'={\delta'}(X) \cup P$
and $D={\delta'}(X) \cap P,$ 
we acquire the following long exact sequence by the excision
{\small
$$H^0(D,{i'}_{D}^\ast {\bar{\cH'}} \otimes R{i'}_{D}^!\La)
\longrightarrow
H_D^0(X,{\delta'}^\ast {\bar{\cH'}} \otimes {R{\delta'}}^!\La)
\oplus
H^0(P, i_{P}^\ast {\bar{\cH'}} \otimes Ri_{P}^!\La)
\longrightarrow
H^0_P(X',{i'}^\ast {\bar{\cH'}} \otimes {R{i'}}^!\La)
$$}
$\longrightarrow H^1(D,{i'}_{D}^\ast {\bar{\cH'}} \otimes R{i'}_{D}^!\La)
\longrightarrow \cdots$
where $i_P:P \longrightarrow (X \times X)'$
and $i'_D:D \longrightarrow (X \times X)'$
are the closed immersions.
By the purity theorem and \cite[Corollary 2.21(3)]{S}, we obtain isomorphisms
$H_D^0(X,{\delta'}^\ast {\bar{\cH'}} \otimes {R{\delta'}}^!\La)
\simeq H_D^0(X,\widetilde{\delta} ^\ast \widetilde{j}_\ast \cH_0)
\simeq
H_D^0(X,j_\ast \mathcal{E}nd(\cF))=0.$
Let $i_D:D \longrightarrow X$ be the closed immersion.
Since we have an isomorphism
$H^i(D,{i'}_{D}^\ast {\bar{\cH'}} \otimes R{i'}_{D}^!\La)
\simeq
H^i(D, j_\ast \mathcal{E}nd(\cF)|_D \otimes R{i}_{D}^!\La),$ we acquire $H^i(D,{i'}_{D}^\ast {\bar{\cH'}} \otimes R{i'}_{D}^!\La)=0$
for $i \leq 1$ again by the purity theorem.
Therefore we obtain an isomorphism
$H^0(P, i_{P}^\ast {\bar{\cH'}} \otimes Ri_{P}^!\La)
\simeq
H^0_P(X',{i'}^\ast {\bar{\cH'}} \otimes {R{i'}}^!\La).$

By lemma \ref{geom}, the purity theorem and the excision,
it suffices to show that 
${Rf_J}_\ast ({\bar{\cH'}}|_{P_J})$
is acyclic for each $\phi \neq J \subset I$
where $P_J$ is a $(\mathbb{P}^1)^{\sharp{J}}$-bundle
over $D_J$ and $f_J:P_J \longrightarrow D_J$ is the projection.
Since the assertion is \'{e}tale local,
we may assume that
$\cF$ is the tensor product $\bigotimes_{i \in I}\cF_i$
where $\cF_i$ is the extension by zero of a 
smooth sheaf on the complement $X \backslash D_i$ for
$i \in I$ in the same way as in \cite[the proof of Lemma 2.2.4]{AS}.
Since $(X \times X)'$ is the fiber product
of $(X \times X)'_i$ over $X \times X$
and $P_J$ is the fiber product of $P_i \subset (X \times X)'_i$
for $i \in J$ and the diagonal $X \subset (X \times X)'_i$
for $i \in I \backslash J$ over $X$
where $P_i$ is the $\mathbb{P}^1$-bundle over $D_i.$
Hence, by the Kunneth formula,
it is reduced to the case where $D$ is a smooth divisor.
By the cartesian diagram
\[\xymatrix{
P \ar[r]^{\!\!\!\!\!\!\!\!i_P}\ar[d]^{f_P}& (X \times X)' \ar[d]^{f} \\
D \ar[r]^{\!\!\!\!\!i'_D}& X \times X,
}
\]we obtain
${R{f_P}}_\ast (\bar{\cH'})|_P \simeq
({R{f}}_\ast \bar{\cH'})|_{D}$
by the proper base change theorem.
By the isomorphism ${R{f}}_\ast {\bar{\cH'}} \simeq {\bar{\cH}}$ proved in loc.\ cit., the assertion follows.
\end{proof}
We have a cohomological correspondence $j_!1={ \rm id}_{j_!\cF}$ in
$H_X^0(X \times X,\bar{\cH})$ and 
its pull-back $f^\ast {\rm id}_{j_!\cF} \in H_{X'}^0((X \times
X)',\bar{\cH'})$ by (\ref{zz}).
\begin{lemma}\label{existence}
Let the notation be as above. There exists 
a unique element $(f^\ast {\rm id}_{j_!\cF})'$
in $H_{X' \backslash V'}^0((X \times
X)',\bar{\cH'} \otimes Rg'_\ast\La)$
which is sent to $f^\ast {\rm id}_{j_!\cF}$
 by the canonical map
$H_{X' \backslash V'}^0((X \times
X)',\bar{\cH'} \otimes Rg'_\ast\La)
\longrightarrow
H_{X'}^0((X \times
X)',\bar{\cH'} \otimes Rg'_\ast\La).$
\end{lemma}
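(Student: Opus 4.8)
The plan is to read off uniqueness from the injectivity in Lemma \ref{7}, and to prove existence by a localization argument that isolates the tame part $V$ and then reduces to the tame situation already analyzed in that lemma. Uniqueness is immediate, since the canonical map in the statement is injective by Lemma \ref{7}, so there is at most one preimage of $f^\ast{\rm id}_{j_!\cF}$. For existence, note first that the horizontal arrows in (\ref{f1}) are open immersions, so $(V\times V)'$ is open in $(X\times X)'$, whence $V'=X'\cap(V\times V)'$ is open in $X'$ and $Z:=X'\backslash V'$ is closed in $(X\times X)'$. Writing $\mathcal{G}:=\bar{\cH'}\otimes Rg'_\ast\La$, the localization sequence for $Z\subset X'$, together with excision and the identifications $\bar{\cH'}|_{(V\times V)'}\simeq\bar{\cH}'_{V}$ and $(Rg'_\ast\La)|_{(V\times V)'}\simeq R{g'_V}_\ast\La$, yields an exact sequence
$$H_{Z}^0((X\times X)',\mathcal{G})\longrightarrow H_{X'}^0((X\times X)',\mathcal{G})\stackrel{\rho}{\longrightarrow}H_{V'}^0((V\times V)',\bar{\cH}'_{V}\otimes R{g'_V}_\ast\La).$$

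By exactness, the image of $f^\ast{\rm id}_{j_!\cF}$ in $H_{X'}^0((X\times X)',\mathcal{G})$ lifts to $H_{Z}^0=H_{X'\backslash V'}^0$ if and only if $\rho$ annihilates it. Because the pull-back map (\ref{11}) is assembled from its $V$-counterpart (\ref{pul}) via the Künneth and base-change isomorphisms, the restriction of $f^\ast{\rm id}_{j_!\cF}$ to $(V\times V)'$ is $f_V^\ast{\rm id}_{{j'}_!\cF}$; hence $\rho(f^\ast{\rm id}_{j_!\cF})$ is the image of $f_V^\ast{\rm id}_{{j'}_!\cF}$ under the canonical map into $H_{V'}^0((V\times V)',\bar{\cH}'_{V}\otimes R{g'_V}_\ast\La)$. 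This is exactly the assertion in the tame case $V=X$, so, just as in the proof of Lemma \ref{7}, I may assume $V=X$, $V'=X'$, and $\cF$ tamely ramified along the simple normal crossings divisor $D=X\backslash U$; the task becomes to show that the image of $f^\ast{\rm id}_{j_!\cF}$ in $H_{X'}^0((X\times X)',\bar{\cH'}\otimes Rg'_\ast\La)$ vanishes.

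For this I would reuse the distinguished triangle from the proof of Lemma \ref{7}, namely
$$i'_\ast(i'^\ast\bar{\cH'}\otimes Ri'^!\La)\longrightarrow\bar{\cH'}\longrightarrow\bar{\cH'}\otimes Rg'_\ast\La\longrightarrow,$$
obtained from the localization triangle $i'_\ast Ri'^!\La\to\La\to Rg'_\ast\La$ by tensoring with $\bar{\cH'}$ and applying the projection formula for the closed immersion $i'$. In the resulting long exact sequence of cohomology with support in $X'$, the canonical map $H_{X'}^0((X\times X)',\bar{\cH'})\to H_{X'}^0((X\times X)',\bar{\cH'}\otimes Rg'_\ast\La)$ has kernel equal to the image of $H_{X'}^0(X',i'^\ast\bar{\cH'}\otimes Ri'^!\La)$. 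It therefore suffices to show that $f^\ast{\rm id}_{j_!\cF}$ comes from this cycle-class factor $Ri'^!\La$.

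To establish this I would use that the identity correspondence is itself of cycle-class type: under the construction recalled in Subsection 2.1 it equals $u(\delta_X(X),{\rm id})$, the class built from the diagonal cycle $[\delta_X(X)]$ and the identity endomorphism of $\cF$, so it factors through $R\delta_X^!\La$. Since by Lemma \ref{geom} the scheme $X'$ is the full preimage $f^{-1}(\delta_X(X))$, pulling back along $f$ sends a class supported on the diagonal to one supported on $X'$; and the map (\ref{11}), which by \cite{S} is characterized by inducing the canonical isomorphism $R\mathcal{H}om({\rm pr}_2^\ast\cF,R{\rm pr}_1^!\cF)\to\cH_0(d)[2d]$ over $U\times U$, is compatible with the purity (Gysin) isomorphisms. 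Consequently $f^\ast{\rm id}_{j_!\cF}$ is the image of the log-diagonal cycle class on $X'$, lies in the $Ri'^!\La$-factor, and therefore dies in $\bar{\cH'}\otimes Rg'_\ast\La$. The principal obstacle is precisely this compatibility: one must verify, component by component over the diagonal and the several $(\mathbb{P}^1)^{\sharp J}$-bundles of Lemma \ref{geom}, that the log pull-back of the diagonal cycle class lifts through $i'_\ast(i'^\ast\bar{\cH'}\otimes Ri'^!\La)$, for which the purity theorem and the explicit description of (\ref{11}) and (\ref{pul}) in \cite{S} are the essential inputs.
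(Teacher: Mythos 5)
Your reduction steps are the same as the paper's: uniqueness follows from the injectivity in Lemma \ref{7}, and by the localization sequence existence is equivalent to the vanishing of the restriction of $f^\ast \mathrm{id}_{j_!\cF}$ to $H_{V'}^0((V\times V)',\bar{\cH'}_V\otimes R{g'_V}_\ast\La)$, which, since the map (\ref{11}) restricts to (\ref{pul}), is the class ${f_V}^\ast\mathrm{id}_{j'_!\cF}$. Up to this point you match the paper. The problem is the last step, which is the actual content of the lemma, and which you do not complete: you propose to show that $f^\ast\mathrm{id}_{j_!\cF}$ lifts through the Gysin factor $i'_\ast(i'^\ast\bar{\cH'}\otimes Ri'^!\La)$ of the localization triangle, but you yourself flag the required compatibility as ``the principal obstacle'' and leave it unverified. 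Moreover the justification you offer --- that the identity correspondence ``is of cycle-class type $u(\delta_X(X),\mathrm{id})$'' --- is not available as stated: the construction $u(\Gamma,\gamma)$ of Subsection 2.1 requires the target sheaf to be smooth, and $j_!\cF$ is not smooth on $X$. The identity is a cycle-class correspondence only on $U\times U$; whether its zero-extension, pulled back to the log blow-up, is still given by a cycle class supported on the log-diagonal is exactly the nontrivial point, and it is where tameness must enter (your argument so far never uses the tameness hypothesis, yet the statement is false without it).

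The missing input is \cite[Proposition 3.1.1.2]{S}: because $\cF$ is tamely ramified along $V\backslash U$, one has the equality ${f_V}^\ast\mathrm{id}_{j'_!\cF}=e\cup[V]$ in $H_{V'}^0((V\times V)',\bar{\cH'}_V)$, where $e\in\Gamma(V,\widetilde{j}_\ast\cH_0|_V)$ is the canonical extension of the identity and $[V]\in H_V^{2d}((V\times V)\widetilde{\ },\La(d))$ is the class of the log-diagonal. Once this is in hand the vanishing is immediate and elementary: the cup product factors through $H_V^{2d}((V\times V)\widetilde{\ },R{\widetilde{g}_V}_\ast\La(d))$, which is zero because the log-diagonal has been removed, so $e\cup[V]$ dies after tensoring with $R{g'_V}_\ast\La$. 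This is the route the paper takes; note that it works on the log product $(V\times V)\widetilde{\ }$, where the log-diagonal $V$ is a smooth closed subscheme and the cycle class is unambiguous, rather than on all of $X'$, which is singular (a union of the diagonal with the $(\mathbb{P}^1)^{\sharp J}$-bundles of Lemma \ref{geom}) so that your intended appeal to purity for $Ri'^!\La$ would itself need justification. You should replace your final two paragraphs by an appeal to \cite[Proposition 3.1.1.2]{S} and the vanishing $H_{V}^{2d}((V\times V)\widetilde{\ },R{\widetilde{g}_V}_\ast\La(d))=0$.
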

\begin{proof}By the localization sequence and Lemma \ref{7},
it suffices to show that
the element $f^\ast {\rm id}_{j_!\cF}$ goes to
zero under the restriction map $H_{X'}^0((X \times
X)',\bar{\cH'} \otimes Rg'_\ast\La) \longrightarrow
H_{V'}^0((V \times V)',\bar{\cH'}_V \otimes R{g'_V}_\ast \La).$
Namely we prove the following vanishing
${f_V}^\ast {\rm id}_{j'_!\cF}=0$
in $H_{V'}^0((V \times V)',\bar{\cH'}_V \otimes R{g'_V}_\ast \La).$
Since $\cF$ is tamely ramified along $V \backslash U,$
we have an equality ${f_V}^\ast {\rm id}_{j'_!\cF}=e \cup [V]$
in $H_{V'}^0((V \times V)',\bar{\cH'}_V)$
by \cite[Proposition 3.1.1.2]{S}.
We consider the following commutative diagram
\[\xymatrix{
e \cup [V] \in H_V^0((V \times V) \widetilde{},\widetilde{\mathcal{H}}_V) \ar[r] &
H_{V}^0((V \times V) \widetilde{},\widetilde{\mathcal{H}}_V \otimes {R\widetilde{g}_V}_\ast \La)\\
[V] \in H_{V}^{2d}((V \times V) \widetilde{},\La(d)) \ar[r]\ar[u]^{e \cup} &
H_V^{2d}((V \times V) \widetilde{},{R\widetilde{g}_V}_\ast \La(d)) \simeq 0 \ar[u]^{e \cup}\\
}
\]where $\widetilde{g}_V:(V \times V) \widetilde{} \backslash \widetilde{V}
\longrightarrow (V \times V) \widetilde{}$ is the open immersion and 
the horizontal arrows are induced by the canonical map $\La \longrightarrow {R\widetilde{g}_V}_\ast \La.$
Since we have $H_{V}^{2d}((V \times V) \widetilde{},{R\widetilde{g}_V}_\ast \La(d))=0,$
we acquire an equality $e \cup [V]=0$ in $H_{V}^0((V \times V) \widetilde{},\widetilde{\cH}_V
\otimes {R\widetilde{g}_V}_\ast \La) \simeq H_V^0((V \times V)',\bar{\cH'}_V \otimes {Rg'_V}_\ast \La)$ by the above commutative diagram.
Hence we obtain the vanishing ${f_V}^\ast {\rm id}_{j'_!\cF}=0$
in $H_{V'}^0((V \times V)',\bar{\cH'}_V \otimes R{g'_V}_\ast \La).$
\end{proof}

\begin{lemma}\label{ll}
Let the notation be as above. 
Further we assume that $X$ is smooth over $k.$
Then the canonical map induced by the map
$\La \longrightarrow {\delta'}^\ast Rg'_\ast
\La$
$$H_{S}^0(X,\cK_X) \longrightarrow
H_{S}^0(X,\cK_X \otimes {\delta'}^\ast Rg'_\ast
\La)$$ is an isomorphism.
\end{lemma}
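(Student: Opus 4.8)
The plan is to imitate the non-logarithmic computation indicated in Remark \ref{aho2}, replacing the diagonal $\delta_X(X) \subset X \times X$ by the diagonal component $\delta'(X) \subset X'$ and smooth purity for $\delta_X$ by purity for the regular immersion $i' : X' \longrightarrow (X \times X)'$. First I would write down the localization triangle
$$ i'_\ast R{i'}^! \La \longrightarrow \La \longrightarrow Rg'_\ast \La \longrightarrow $$
on $(X \times X)'$ attached to the closed immersion $i'$ and its open complement $g'$. By Lemma \ref{geom} the logarithmic diagonal factors as $\delta' = i' \circ a$, where $a : X \longrightarrow X'$ is the closed immersion onto the diagonal component $\delta'(X)$ of $X'$. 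Since $\delta'$ factors through the closed subscheme $X'$, one has $\delta'^{-1}(X') = X$, and the resulting cartesian square carries the identity on the pulled-back copy of $X$; proper base change then gives ${\delta'}^\ast i'_\ast R{i'}^! \La \simeq a^\ast R{i'}^! \La$. Applying ${\delta'}^\ast$ to the triangle and tensoring with the invertible complex $\cK_X \simeq \La(d)[2d]$ yields a distinguished triangle
$$ \cK_X \otimes a^\ast R{i'}^! \La \longrightarrow \cK_X \longrightarrow \cK_X \otimes {\delta'}^\ast Rg'_\ast \La \longrightarrow $$
on $X$, whose second arrow is the canonical map of the lemma. Taking the long exact sequence of $H^\bullet_S(X, -)$ reduces the assertion to the vanishing $H^i_S(X, \cK_X \otimes a^\ast R{i'}^! \La) = 0$ for $i = 0, 1$.

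The heart of the matter is to show $R{i'}^! \La \simeq \La(-d)[-2d]$ on $X'$; equivalently, that $i'$ is a regular immersion of codimension $d$. Since $X$ is smooth and $X \setminus U$ has simple normal crossings, $(X \times X)'$ is smooth of dimension $2d$, being obtained from $X \times X$ by blowing up along the smooth centres $D_i \times D_i$, and $X'$ is the scheme-theoretic inverse image of the diagonal $\delta$. Working in the local coordinates of the proof of Lemma \ref{geom}, the ideal $(t_i - s_i, y_k - z_k)$ of $\delta$ pulls back, in the chart $s_i = t_i u_i$, to $(t_i(1 - u_i), y_k - z_k)$; these $d$ elements cut out the pure-codimension-$d$ subscheme $X'$ in the regular ambient ring, hence form a regular sequence, so $i'$ is a regular immersion of codimension $d$. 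Purity for regular immersions then gives $R{i'}^! \La \simeq \La(-d)[-2d]$, whence $a^\ast R{i'}^! \La \simeq \La(-d)[-2d]$ and $\cK_X \otimes a^\ast R{i'}^! \La \simeq \La$.

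With this identification the triangle above reads $\La \longrightarrow \cK_X \longrightarrow \cK_X \otimes {\delta'}^\ast Rg'_\ast \La \longrightarrow$, so the long exact sequence of $H^\bullet_S(X, -)$ reduces the claim to $H^i_S(X, \La) = 0$ for $i = 0, 1$. As $V \supseteq U$ is dense, the closed subset $S = X \setminus V$ has codimension at least $1$ in the smooth scheme $X$, and cohomological purity gives $H^i_S(X, \La) = 0$ for $i < 2$; this finishes the proof.

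I expect the regular-immersion claim to be the main obstacle: one must check that purity $R{i'}^! \La \simeq \La(-d)[-2d]$ persists at the singular points of $X'$, namely along the loci $D_J$ where the diagonal component meets the $(\mathbb{P}^1)^{\sharp J}$-bundle components. The local coordinate computation above is what makes this transparent, but some care is needed to organise it for an arbitrary simple normal crossings divisor, using that $(X \times X)'$ is the fibre product of the one-step blow-ups $(X \times X)'_i$ as in Lemma \ref{geom}.
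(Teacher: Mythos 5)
The paper itself does not prove this lemma; it simply cites \cite[Lemma 2.3]{T}, so there is no internal argument to compare yours against. Your overall strategy---the localization triangle for $i':X'\longrightarrow (X\times X)'$, pulling back along $\delta'$, and reducing to a vanishing of cohomology supported in $S$---is a natural one, but it breaks at the key step: the claim that $R{i'}^!\La\simeq\La(-d)[-2d]$ because $i'$ is a regular immersion of codimension $d$. Being a regular (local complete intersection) immersion is \emph{not} sufficient for purity of $Ri^!\La$; absolute purity requires the closed subscheme itself to be regular, and $X'$ is not: by Lemma \ref{geom} it is the union of the diagonal $X$ and the $(\mathbb{P}^1)^{\sharp J}$-bundles over the $D_J$, and these components cross one another. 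Already the Cartier divisor $Z=\{xy=0\}\subset\mathbb{A}^2$---a regular immersion of codimension $1$ cut out by one equation---has $\mathcal{H}^2(Ri^!\La)=\La(-1)\oplus\La(-1)$ at the origin and a nonzero $\mathcal{H}^3(Ri^!\La)=\La(-2)$ supported at the origin, as one sees from the Mayer--Vietoris triangle $R\Gamma_{Z_1\cap Z_2}\rightarrow R\Gamma_{Z_1}\oplus R\Gamma_{Z_2}\rightarrow R\Gamma_{Z_1\cup Z_2}\rightarrow$. Exactly the same phenomenon occurs for $X'$ along the loci where the diagonal meets the exceptional components, i.e.\ over $D$. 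Your local computation does show that $X'$ is cut out by a regular sequence, but that only gives the Gysin map $\La(-d)[-2d]\rightarrow R{i'}^!\La$, not an isomorphism.

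Concretely, $a^\ast R{i'}^!\La$ differs from $\La(-d)[-2d]$ by terms supported on $D$, so $\cK_X\otimes a^\ast R{i'}^!\La$ is not $\La_X$ but $\La_X$ plus complexes supported on $D$ in degrees $\geq 0$. When $D\cap S\neq\emptyset$ (the case of interest in the applications) the vanishing you reduce to is simply false: for $X$ a curve and $D=\{p\}$ with $p\in S$, one finds $\mathcal{H}^0(\cK_X\otimes a^\ast R{i'}^!\La)=\La_X\oplus\La_p$ and $\mathcal{H}^1=\La_p(-1)$, so $H_S^0(X,\cK_X\otimes a^\ast R{i'}^!\La)\neq 0$. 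In fact, since $a^\ast R{i'}^!\La$ lies in degrees $\geq 2d$ by semipurity while $\cK_X$ sits in degree $-2d$, the first map of your triangle vanishes, the triangle splits, and the lemma is \emph{equivalent} to the vanishing of $H_S^1(X,\cK_X\otimes a^\ast R{i'}^!\La)$; establishing that requires an actual analysis of $R{i'}^!\La$ along the exceptional components over $D\cap S$ and of the differentials in the hypercohomology spectral sequence, which your argument does not supply---this is presumably the content of \cite[Lemma 2.3]{T}. A secondary point: in the setting of the lemma only $D\cap V$ is assumed to have simple normal crossings, so the smoothness of $(X\times X)'$ and your coordinate computation are only guaranteed over $V$, not near $S$ where the support condition lives.
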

\begin{proof}
This is proved in \cite[Lemma 2.3]{T}.
\end{proof}

The pull-back by $\delta'$ and the evaluation map (\ref{6.2})
induce a map 
$$e' \cdot {\delta'}^\ast:H_{X' \backslash V'}^0((X \times X)',\bar{\cH'} \otimes Rg'_\ast\La)
\longrightarrow H_{S}^0(X,{j_V}_!\cK_V \otimes
{\delta'}^\ast Rg'_\ast\La).$$ 
We have obtained the following maps
\[\xymatrix{
H_{X'}^0((X \times X)',\bar{\cH'}) \ar[r]^{\!\!\!\!\!\!\!\!\!\!\!\!\!\!\rm can.}&
H_{X'}^0((X \times X)',\bar{\cH'} \otimes Rg'_\ast\La) & \\
{\rm id}_{j_! \cF} \in H_X^0(X \times X,\bar{\cH}) \ar[u]^{f^\ast }_{(\ref{zz})} &H_{X' \backslash V}^0((X \times X)',\bar{\cH'} \otimes Rg'_\ast\La) \ar[u]^{\rm inj.}_{{\rm Lemma} \ref{7}}\ar[r]^{e' \cdot {\delta'}^\ast\!\!\!\!\!} &
H_{S}^0(X,{j_V}_!\cK_V \otimes
{\delta'}^\ast Rg'_\ast\La).
}
\]By the map $e' \cdot {\delta'}^\ast$ and Lemmas
\ref{7} and \ref{existence}, we obtain a class $e' \cdot {\delta'}^\ast (f^\ast {\rm
id}_{j_! \cF})'$ in $H_{S}^0(X,{j_V}_!\cK_V \otimes
{\delta'}^\ast Rg'_\ast\La).$ We put $C_{S,!}^{{\rm
log},0}(j_!\cF):=e' \cdot {\delta'}^\ast (f^\ast {\rm id}_{j_! \cF})'.$
We denote by $C_{S}^{{\rm log},0}(j_!\cF) \in
H_{S}^0(X,\cK_X)$ the image of the class $C_{S,!}^{{\rm
log},0}(j_!\cF)$ under the canonical map
$H_{S}^0(X,{j_V}_!\cK_V \otimes {\delta'}^\ast
Rg'_\ast\La) \longrightarrow H_{S}^0(X,\cK_X \otimes
{\delta'}^\ast Rg'_\ast\La) \simeq H_{S}^0(X,\cK_X).$
(by Lemma \ref{ll}.) 

\begin{definition}\label{bakayarou}
Let the notation and the assumption be as in Lemma \ref{ll}.
We call the class $C_S^{\text{log},0}(j_!\cF)$
in $H_S^0(X,\cK_X)$
{\it the logarithmic localized characteristic class of $j_!\cF.$}
We call the element $C_{S,!}^{\text{log},0}(j_!\cF)$
in $H_S^0(X,{j_V}_!\cK_V \otimes {\delta'}^\ast Rg'_\ast\La)$
{\it the refined logarithmic localized characteristic class
of $j_!\cF.$}
We put the difference
$C_S^{\text{log},00}(j_!\cF):=C_{S}^{\text{log},0}(j_!\cF)-\text{rk}(\cF) \cdot C_{S}^{\text{log},0}(j_!\La)
\in H_S^0(X,\cK_X).$
\end{definition}

Let the notation be as above.
In the following, we assume that
$V=X,$ $X \backslash U=D$ is a divisor with simple normal crossings 
and that $\cF$ is tamely ramified along the boundary $D.$
We will prove the vanishing of the localized
characteristic class, i.e.\ $C_D^{00}(j_!\cF)=0.$
This vanishing plays a key role in the proof
of the localized Abbes-Saito formula.
\begin{remark}
Let the notation be as in Definition \ref{bakayarou}.
We expect that the logarithmic localized characteristic class
$C_S^{\text{log},00}(j_!\cF)$
is sent to the localized characteristic class
$C_{S \cup D}^{00}(j_!\cF)$ by the canonical
map $H_S^0(X,\cK_X) \longrightarrow H_{S \cup D}^0(X,\cK_X).$
If we admit this,
the vanishing $C_D^{00}(j_!\cF)=0$
will follow from Definition \ref{bakayarou}
by putting
$X=V, S=\emptyset.$
However we do not know a proof.
We give a proof of the vanishing $C_D^{00}(j_!\cF)=0$
in the following.
\end{remark}
We write $\bar{\cH'}_{\cF}$ and $e_{\cF} \in \Gamma(X,{\widetilde{j}}_\ast \cH_0|_X)$
for the sheaf $\bar{\cH'}$ and the unique section
$e \in \Gamma(X,{\widetilde{j}}_\ast \cH_0|_X)$ lifting the identity
$\Gamma(U,\cH_0|_U)$ to emphasize that they are associated to
the sheaf $\cF.$

The canonical map $\La \longrightarrow f^\ast
{R{g_{X}}}_\ast \La$ induces a map $H_{X'}^0((X \times X)',
\bar{\cH'}_{\cF}) \longrightarrow H_{X'}^0((X \times
X)', \bar{\cH'}_{\cF} \otimes f^\ast {R{g_{X}}}_\ast \La).$
By the same argument as the proof of Lemma \ref{lll1},
the canonical map $H_{X' \backslash U}^0((X \times
X)', \bar{\cH'}_{\cF} \otimes f^\ast {R{g_{X}}}_\ast
\La) \longrightarrow
H_{X'}^0((X
\times X)', \bar{\cH'}_{\cF} \otimes f^\ast
{R{g_{X}}}_\ast \La)$ is an isomorphism.
  The image of $f^\ast {\rm id}_{{j}_!\cF}$
under the composite
$H_{X'}^0((X \times X)',
\bar{\cH'}_{\cF}) \longrightarrow H_{X'}^0((X \times
X)', \bar{\cH'}_{\cF} \otimes f^\ast {R{g_{X}}}_\ast \La)
\simeq
H_{X' \backslash U}^0((X \times
X)', \bar{\cH'}_{\cF} \otimes f^\ast {R{g_{X}}}_\ast
\La)$ denotes $(f^\ast {\rm id}_{{j}_!\cF})^{\rm loc}$.
The pull-back by ${\delta}'$ and the evaluation map
$e'$(\ref{6.2}) induce $e' \cdot {\delta'}^\ast:H_{X' \backslash U}^0((X \times X)',
\bar{\cH'}_{\cF} \otimes f^\ast {R{g_{X}}}_\ast \La)
\longrightarrow H_{D}^0(X,\cK_{X} \otimes {\delta}^\ast
{R{g_{X}}}_\ast \La)\simeq H_{D}^0(X,\cK_{X}).$(by Remark \ref{aho2}.) The image of the
element $(f^\ast{\rm id}_{{j}_!\cF})^{\text{loc}} \in H_{X' \backslash U}^0((X \times
X)', \bar{\cH'}_{\cF} \otimes f^\ast {Rg_X}_\ast \La)$ under this map
denotes $e' \cdot {\delta'}^\ast (f^\ast {\rm
id}_{{j}_!\cF})^{\rm loc} \in H_{D}^0(X,\cK_{X}).$

\begin{lemma}\label{GOS-lemma4}
Let the notation be as above.
Then we have the following vanishing
$$C_{D}^{00}({j}_!\cF)=0$$
in $H_{D}^0(X,\cK_X).$
\end{lemma}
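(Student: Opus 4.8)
The plan is to compute the ordinary localized characteristic class on the logarithmic blow-up $(X \times X)'$, where the tameness hypothesis pins down the pulled-back identity correspondence, and then to extract a factor of $\mathrm{rk}(\cF)$ by means of the trace.

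First I would reduce the assertion to a comparison of two genuine localized characteristic classes. By Remark \ref{aho2} (applied with $S = \emptyset$ and $V = X$), the class $C_D^{00}(\juq\cF)$ is the image in $H_D^0(X,\cK_X)$ of the difference $C_D^0(\juq\cF) - \mathrm{rk}(\cF)\cdot C_D^0(\juq\La)$, so it suffices to prove $C_D^0(\juq\cF) = \mathrm{rk}(\cF)\cdot C_D^0(\juq\La)$ in $H_D^0(X,\cK_X)$. I would then identify $C_D^0(\juq\cF)$ with the element $e'\cdot {\delta'}^\ast (f^\ast \mathrm{id}_{\juq\cF})^{\mathrm{loc}}$ constructed just before the statement. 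This identification amounts to the compatibility of the (refined) localized characteristic class with the pull-back along the proper map $f\colon (X \times X)' \to X \times X$: since $Rf_\ast \bar{\cH'} \isom \bar{\cH}$ by Lemma \ref{5} and $f$ is an isomorphism over $U \times U$, the evaluation $e'$ and the localization by $f^\ast R{g_X}_\ast\La$ on $(X\times X)'$ compute, via $\delta'$, the same class in $H_D^0(X,\cK_X)$ that the evaluation $e$ and the localization by $R{g_X}_\ast\La$ compute via $\delta_X$ on $X \times X$.

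Next I would exploit the tame ramification. By \cite[Proposition 3.1.1.2]{S} the pulled-back identity has the explicit form $f^\ast \mathrm{id}_{\juq\cF} = e_{\cF}\cup [X']$ in $H_{X'}^0((X\times X)',\bar{\cH'}_{\cF})$, where $e_\cF$ is the section of ${\widetilde{j}}_\ast \cH_0|_X$ lifting the identity and $[X']$ is the cycle class of the logarithmic diagonal locus; the same formula holds for the constant sheaf $\La$, giving a section $e_\La$. By Lemma \ref{geom} the support decomposes as $X' = \delta'(X)\cup P$ with $\delta'(X)\cap P = D$, so the cycle $[X']$, its image under the localization by $f^\ast R{g_X}_\ast\La$, the restriction ${\delta'}^\ast$, and the contribution of the exceptional locus $P$ over $D$ are all independent of the sheaf; only the endomorphism factor $e_\cF$ carries the sheaf dependence.

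Finally I would extract the rank. Because the evaluation map $e'$ acts on the ${\cH}om$-factor as the trace of endomorphisms, applying $e'\cdot{\delta'}^\ast$ to the localization of $e_\cF\cup[X']$ yields $\mathrm{Tr}(\mathrm{id}_\cF) = \mathrm{rk}(\cF)$ times the common geometric class built from $[X']$, which for the constant sheaf is exactly $C_D^0(\juq\La)$ (where the trace of the identity is $1$). This gives $C_D^0(\juq\cF) = \mathrm{rk}(\cF)\cdot C_D^0(\juq\La)$, whence $C_D^{00}(\juq\cF) = 0$. I expect the main obstacle to be precisely this last step: one must verify that, after the cup product with $[X']$, the localization, and the restriction ${\delta'}^\ast$, the evaluation still factors through the trace of $e_\cF$, and that the exceptional contributions over $D$ coming from $P$ agree for $\cF$ and for $\La$ up to the rank factor. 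Here the structure of $X'$ from Lemma \ref{geom}, together with the projection-formula and purity arguments already used in Lemmas \ref{7} and \ref{existence}, should permit a reduction—\'{e}tale-locally and by the Künneth formula—to the case of a smooth divisor, and then to the $\mathbb{P}^1$-bundle computation in which the boundary term is controlled.
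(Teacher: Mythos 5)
Your proposal follows essentially the same route as the paper's proof: reduce via Remark \ref{aho2} to comparing $C_D^0({j}_!\cF)$ with $\mathrm{rk}(\cF)\cdot C_D^0({j}_!\La_U)$, identify $C_D^0({j}_!\cF)$ with $e'\cdot{\delta'}^\ast(f^\ast\mathrm{id}_{{j}_!\cF})^{\mathrm{loc}}$ by the commutative diagram comparing the localizations on $X\times X$ and $(X\times X)'$, invoke \cite[Proposition 3.1.1.2]{S} to write $f^\ast\mathrm{id}_{{j}_!\cF}=e_{\cF}\cup[X]$, and extract $\mathrm{rk}(\cF)$ as the trace of $e_{\cF}$. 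One small correction: the cycle appearing in Saito's formula is the class of the logarithmic diagonal $\delta'(X)$ rather than of the whole fibre $X'$, so the ``main obstacle'' you anticipate --- controlling separate contributions of the exceptional locus $P$ over $D$ --- does not actually arise, and the computation collapses immediately to $e'(e_{\cF})\cdot{\delta'}^\ast[X]=\mathrm{rk}(\cF)\cdot{\delta'}^\ast[X]$.
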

\begin{proof}
We prove an equality
$C_{D}^0({j}_!\cF)=e' \cdot {\delta'}^\ast (f^\ast {\rm
id}_{{j}_!\cF})^{\rm loc}$ in $H_{D}^0(X,\cK_{X}).$
This follows from Definition \ref{aho1}, Remark \ref{aho2} and the following commutative diagram
\[\xymatrix{
{\rm
id}_{{j}_!\cF} \in H_{X}^0(X \times X,{\bar{\cH}}_{\cF}) \ar[r]\ar[d]^{f^\ast} &
H_{X}^0(X \times X,{\bar{\cH}}_{\cF} \otimes {R{g_{X}}}_\ast \La)
\ar[d]^{f^\ast}
\\
f^\ast {\rm
id}_{{j}_!\cF} \in H_{X'}^0((X \times X)', \bar{\cH'}_{\cF}) \ar[r] &
H_{X'}^0((X \times X)', \bar{\cH'}_{\cF} \otimes
f^\ast {R{g_{X}}}_\ast \La) }
\]
\[\xymatrix{
 & H_{D}^0(X \times X,{\bar{\cH}}_{\cF} \otimes {R{g_{X}}}_\ast
\La) \ar[d]^{f^\ast}\ar[r]^{e \cdot {\delta}^\ast
\!\!\!\!\!\!}\ar[l]^{\!\!\!\!\!\!\!\!\!\!\!\!\!\!\!\!\!\!\!\!\!\!\!\!\!\!\!\!\!\!\!\!\!\!\!\!
 \rm can.}_{\!\!\!\!\!\!\!\!\!\!\!\!\!\!\!\!\!\!\!\!\!\!\!\!\!\!\!\!\!\!\!\!\!\!\!\! \simeq} & H_{D}^0(X,\cK_{X} \otimes {\delta}^\ast {R{g_{X}}}_\ast \La)\ar[d]^{\rm id} \\
 & H_{X' \backslash U}^0((X \times X)',
\bar{\cH'}_{\cF} \otimes f^\ast {R{g_{X}}}_\ast \La)
\ar[r]^{\!\!\!\!\!\!\!e' \cdot {\delta'}^\ast
}\ar[l]^{\!\!\!\!\!\!\!\!\!\!\!\!\!\!\!\!\!\!\!\!\!\!\!\!\!\!\!\!\!\!\!\!\!\!\!\!\!\!\!\!\!\!\!\!\!\!\!\!\!\!\!\!
\rm
can.}_{\!\!\!\!\!\!\!\!\!\!\!\!\!\!\!\!\!\!\!\!\!\!\!\!\!\!\!\!\!\!\!\!\!\!\!\!\!\!\!\!\!\!\!\!\!\!\!\!\!\!\!\!\!\!
\simeq}&
H_{D}^0(X,\cK_{X} \otimes {\delta}^\ast {R{g_{X}}}_\ast \La) \simeq H_D^0(X,\cK_X)\\
}
\]where the vertical arrows are induced by the map (\ref{11}) $f^\ast \bar{\cH} \longrightarrow \bar{\cH'}_{\cF}.$ 
By this diagram and $f^\ast {\rm id}_{{j}_!\cF}=e_{\cF} \cup [X]$ by \cite[Proposition 3.1.1.2]{S}, we acquire equalities
$C_{D}^0({j}_!\cF)=e' \cdot {\delta'}^\ast
(f^\ast {\rm id}_{{j}_!\cF})^{\rm loc}=e'(e_{\cF})
\cdot {\delta'}^\ast [X]={\rm rk}(\cF) \cdot
{\delta'}^\ast [X]$ and $C_{D}^0({j}_!\La_U)=e'(e_{\La_U}) \cdot {\delta'}^\ast [X]={\delta'}^\ast
[X]$. Hence the assertion follows.
\end{proof}

In the following, we calculate the localized characteristic class by the localized Chern class
using Lemma \ref{GOS-lemma4} in tamely ramified case. We will not use the results in the following sections.
We recall the definition of the localized Chern
class from \cite[Section 3.4]{KS}. Let $X$ be a scheme of finite type over $k$
and $Z \subset X$ be a closed subscheme. Let $\mathcal{E}$ and $\cF$
be locally free $\mathcal{O}_X$-modules of rank $d$ and
$f:\mathcal{E} \longrightarrow \cF$ be an $\mathcal{O}_X$-linear
map. We assume that $f:\mathcal{E} \longrightarrow \cF$ is an
isomorphism on $X \backslash Z.$ We consider the complex
$\mathcal{K}=[\mathcal{E} \longrightarrow \cF]$ of
$\mathcal{O}_X$-modules by putting $\cF$ on degree 0. Then, the
localized Chern class $c_Z^X(\mathcal{K})-1$ is defined as an
element of $CH^\ast(Z \longrightarrow X)$ in \cite[Chapter
18.1]{Fu}. We define an element
$c(\cF-\mathcal{E})_Z^X=(c_i(\cF-\mathcal{E})_Z^X)_{i>0}$ of
$CH^\ast(Z \longrightarrow X)$ by
$$c(\cF-\mathcal{E})_Z^X=c(\mathcal{E}) \cap (c_Z^X(\mathcal{K})-1).$$
In other words,
we put $c_i(\cF-\mathcal{E})_Z^X=\Sigma_{j=0}^{{\rm {\rm min}}(d,i-1)}c_j(\mathcal{E})
 \cap {c_{i-j}}_Z^X(\mathcal{K})$
for $i>0.$
The image of $c(\cF-\mathcal{E})_Z^X$
in
$CH^\ast(X)$
is the difference
$c(\cF)-c(\mathcal{E})$ of
Chern classes.

\begin{lemma}\label{44}
Let $X$ be a smooth scheme over $k$ of dimension $d$, $U$ an open dense subscheme and
$D$ the complement $X \backslash U$.
We assume that $D$ is a divisor with simple normal crossings of $X$.
Let $j:U \longrightarrow X$
be the open immersion.
Let $\{D_i\}_{i \in I}$ be the irreducible components of $D.$
For a subset $J \subset I$, we put $D_J:=\bigcap_{i \in J} D_i$ and
$B_J:=\bigcup_{i \notin J} (D_J \cap D_i).$ 
Let $j_J:D_J - B_J \longrightarrow D_J$ be the open immersion.\\
1. Then, we have
$$
C_D^0(j_! \La_U)=-\Sigma_{r=1}^{{\rm min}(d,n)} \Sigma_{|J|=r, J \subseteq I} C({j_J}_! \La_{D_J - B_J})$$
in $H_D^0(X,\cK_X)$ where $|I|=n.$\\
2. We have
$$c_d(\Omega^1_{X \slash k}({\rm log} D)-\Omega^1_{X \slash k})_D^X \cap[X]
=-\Sigma_{r=1}^{{\rm min}(d,n)} \Sigma_{|J|=r, J \subseteq I}(-1)^{r}c_{d-r}(\Omega^1_{D_J \slash k}({\rm log} B_J)) \cap [D_J]$$
in $CH_0(D).$
\end{lemma}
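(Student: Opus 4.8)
The two assertions share the same combinatorial shape, an inclusion-exclusion over the locally closed strata $D_J^\circ := D_J - B_J$ of $D$, and I would prove Part 1 by a sheaf-theoretic additivity argument and Part 2 by a Chern-class induction, linking the two afterwards through a logarithmic Grothendieck-Ogg-Shafarevich formula that identifies each $C({j_J}_!\La_{D_J-B_J})$ with a Chern class of $\Omega^1_{D_J/k}(\log B_J)$.

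For Part 1, I would start from the additivity of the characteristic class in the distinguished triangle $j_!\La_U \to \La_X \to i_\ast\La_D \to$, where $i:D \to X$ is the closed immersion. The localized class $C_D^0(j_!\La_U)$ is designed to refine, over the support $D$, the difference $C(j_!\La_U)-C(\La_X)$ of global characteristic classes, and by additivity this difference equals $-C(i_\ast\La_D)$. The task is then to compute $C(i_\ast\La_D)$ refined over $D$. I would stratify $D=\bigsqcup_{\emptyset\neq J}D_J^\circ$ and use the additivity $[\La_D]=\sum_{\emptyset\neq J}[(i_J^\circ)_!\La_{D_J^\circ}]$ in the Grothendieck group of constructible sheaves to reduce to the individual strata. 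For each nonempty $J$ the closed stratum $D_J$ is smooth of dimension $d-|J|$, and the localized Lefschetz-Verdier trace formula (Proposition \ref{prop:2}) applied to the closed immersion $D_J \hookrightarrow X$ identifies the contribution of $D_J^\circ$ with the push-forward of the ordinary characteristic class $C({j_J}_!\La_{D_J-B_J})$ computed on the smooth scheme $D_J$. Since the constant sheaf is tame along $B_J$, Lemma \ref{GOS-lemma4} guarantees that no extra localized (wild) correction appears, and the global sign $-1$ is produced by the localization triangle $c_\ast\La(-d)[-2d]\to\La\to Rg_{C\ast}\La$ used to set up the localized class. Summing over $J$ with $1\le|J|\le\min(d,n)$ yields the stated formula.

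For Part 2, I would argue by induction on $n=|I|$ using the residue description of logarithmic differentials. The inclusion $\Omega^1_{X/k}\hookrightarrow\Omega^1_{X/k}(\log D)$ is injective with cokernel $\bigoplus_i(i_i)_\ast\mathcal{O}_{D_i}$ via the residue maps, so the two-term complex $\mathcal{K}=[\Omega^1_{X/k}\to\Omega^1_{X/k}(\log D)]$ is quasi-isomorphic to this torsion sheaf, and its localized Chern class $c(\Omega^1_{X/k}(\log D)-\Omega^1_{X/k})_D^X$ is computed by Fulton's theory from the normal-cone geometry of the $D_i$. On each component the residue exact sequence
\begin{equation}\label{resseq}
0 \longrightarrow \Omega^1_{D_i/k}(\log B_i) \longrightarrow \Omega^1_{X/k}(\log D)|_{D_i} \longrightarrow \mathcal{O}_{D_i} \longrightarrow 0,
\end{equation}
together with $c(\mathcal{O}_{D_i})=1$, reduces the top localized Chern class along $D_i$ to $c_{d-1}(\Omega^1_{D_i/k}(\log B_i))\cap[D_i]$; this is the base case $n=1$, where $B_i=\emptyset$. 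Adding one component at a time, the higher intersections $D_J$ are counted repeatedly by the direct sum of residues and must be corrected, and iterating \eqref{resseq} on the smooth strata $D_J$ while performing this inclusion-exclusion produces exactly the alternating signs $(-1)^r$ and the logarithmic poles along $B_J$ in the stated sum.

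The main obstacle in Part 1 is not the bare numerical identity, which already follows in $H^0(X,\cK_X)$ by comparing Euler characteristics through $\chi(X)=\chi(U)+\chi(D)$ and additivity over strata, but its refinement to cohomology with support in $H_D^0(X,\cK_X)$: one must run the additivity and the passage to strata entirely at the localized level and verify that Proposition \ref{prop:2} matches the localized contribution of each $D_J$ with the honest characteristic class on $D_J$, all support conditions being compatible. In Part 2 the delicate point is the bookkeeping of the localized (bivariant) Chern class of the torsion sheaf $\bigoplus_i\mathcal{O}_{D_i}$ across the mutually intersecting components, that is, making the inductive step rigorous so that the contributions of the strata $D_J$ accumulate with the correct signs and logarithmic structure.
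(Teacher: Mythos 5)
For Part 2 your route is genuinely different from the paper's. The paper does not induct on the number of components: it works with total classes, writes $c(\Omega^1_{X/k}(\log D)-\Omega^1_{X/k})^X_D\cap[X]=c(\Omega^1_{X/k})\cap c^X_D(\mathcal{K})\cap(1-c^X_D(\mathcal{K})^{-1})\cap[X]$ for $\mathcal{K}=\bigoplus_i\mathcal{O}_{D_i}$, expands $(1-c^X_D(\mathcal{K})^{-1})\cap[X]=-\sum_r(-1)^r\sum_{|J|=r}[D_J]$ in closed form (since $c(\mathcal{K})^{-1}=\prod_i(1-[D_i])$), substitutes $c(\Omega^1_{X/k})=c(\Omega^1_{X/k}(\log D))\cap c^X_D(\mathcal{K})^{-1}$, and finishes with the restriction identity $c(\Omega^1_{X/k}(\log D))\cap[D_J]=c(\Omega^1_{D_J/k}(\log B_J))\cap[D_J]$. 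This one-step expansion is exactly the inclusion--exclusion you propose to generate by adding one component at a time; your inductive bookkeeping is workable but is left at the level of ``iterating produces exactly the alternating signs,'' which is the entire content of the step, whereas the paper's identity delivers the signs for free. You lose nothing by doing it globally, and you avoid having to control how the strata $D_J$ are recounted at each stage.

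For Part 1 the paper offers nothing to compare against (``the assertion 1 is easy, we omit a proof''), and your outline --- additivity in the triangle $j_!\La_U\to\La_X\to i_\ast\La_D$, stratification of $D$ by the $D_J-B_J$, and push-forward compatibility of the characteristic class along the closed immersions $D_J\to X$, all carried out with supports in $D$ --- is the natural argument. However, two of your three supporting citations do not do the work you assign them. Proposition \ref{prop:2} is stated for a cartesian square in which $f:V\to U$ is a proper \emph{smooth} morphism between the dense open subschemes; for the closed stratum $D_J\subset D=X\setminus U$ the fibre product $U\times_X D_J$ is empty, so that proposition does not apply, and what you actually need is the compatibility of the \emph{ordinary} characteristic class with proper push-forward along $i_J:D_J\to X$ (an \cite{AS} fact, not a localized one). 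Likewise Lemma \ref{GOS-lemma4} asserts $C_D^{00}(j_!\cF)=0$ where $C_D^{00}(j_!\cF)=C_D^0(j_!\cF)-{\rm rk}(\cF)\cdot C_D^0(j_!\La_U)$; for $\cF=\La_U$ this is the tautology $0=0$ and cannot ``guarantee that no extra localized correction appears.'' Neither slip is fatal --- the facts you actually need are available --- but as written those two references would not carry the refinement to $H_D^0(X,\cK_X)$, which you correctly identify as the only nontrivial point of Part 1.
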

\begin{proof}The assertion 1 is easy. We omit a proof.
We prove 2. We have an exact sequence
$$0 \longrightarrow \Omega^1_{X \slash k} \longrightarrow \Omega^1_{X \slash k}({\rm log}D) \longrightarrow \bigoplus_{i \in I} \mathcal{O}_{D_i} \longrightarrow 0.$$ 
We put $\mathcal{K}:=\bigoplus_{i \in I} \mathcal{O}_{D_i}.$
The above sequence induces equalities
$c(\Omega^1_{X \slash k}({\rm log}D)-\Omega^1_{X \slash k})_D^X \cap [X]
=
c(\Omega^1_{X \slash k}) \cap (c_D^X(\mathcal{K})-1) \cap [X]
=c(\Omega^1_{X \slash k}) \cap c_D^X(\mathcal{K}) \cap (1-c_{D}^X(\mathcal{K})^{-1})
\cap [X]$ and
$(1-c_{D}^X(\mathcal{K})^{-1})
\cap [X]=-\Sigma_{r=1}^{{\rm min}(n,d)} \Sigma_{J \subset I, |J|=r} (-1)^r[D_J]. $
Therefore we obtain an equality
$c(\Omega^1_{X \slash k}({\rm log}D)-\Omega^1_{X \slash k})_D^X \cap [X]
=-\Sigma_{r=1}^{{\rm min}(n,d)}(-1)^r \Sigma_{J \subset I, |J|=r} c(\Omega^1_{X \slash k}) \cap c_D^X(\mathcal{K})
\cap [D_J].$
On the other hand, the following equality holds
$c(\Omega_{X \slash k})=c(\Omega^1_{X \slash k}({\rm log}D)) \cap c_d^X(\mathcal{K})^{-1}.$
Hence we acquire
$c(\Omega^1_{X \slash k}({\rm log}D)-\Omega^1_{X \slash k})_D^X \cap [X]
=-\Sigma_{r=1}^{{\rm min}(n,d)}(-1)^r \Sigma_{J \subset I, |J|=r} c(\Omega^1_{X \slash k}({\rm log}D)) \cap [D_J].$
The assertion follows from an equality $c(\Omega^1_{X \slash k}({\rm log}D)) \cap [D_J]
=c(\Omega^1_{D_J \slash k}({\rm log}B_J)) \cap [D_J].$
\end{proof}

\begin{corollary}\label{45}
Let the notation be as in Lemma \ref{44} and $\cF$ be a smooth $\La$-sheaf on $U$
which is tamely ramified along $D$.
Then, we have
$$C_D^0(j_!\cF)=(-1)^d \cdot {\rm rk}(\cF) \cdot c_d(
\Omega^1_{X \slash k}( {\rm log} D)-\Omega^1_{X \slash k})_D^X \cap [X]$$
in $H_{D}^0(X,\cK_X).$
\end{corollary}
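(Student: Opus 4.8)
The plan is to reduce to the rank-one case via Lemma~\ref{GOS-lemma4}, and then to run everything through the combinatorial identities of Lemma~\ref{44}. First I would invoke Lemma~\ref{GOS-lemma4}, which gives $C_D^{00}(j_!\cF)=0$ in $H_D^0(X,\cK_X)$. By Definition~\ref{aho1} together with Remark~\ref{aho2}, the class $C_D^{00}(j_!\cF)$ is exactly the difference $C_D^0(j_!\cF)-{\rm rk}(\cF)\cdot C_D^0(j_!\La_U)$ (taking $u$ to be the identity, so that ${\rm Tr}(u)={\rm rk}(\cF)$). Hence the vanishing translates into the equality $C_D^0(j_!\cF)={\rm rk}(\cF)\cdot C_D^0(j_!\La_U)$, and it suffices to prove the corollary for the constant sheaf $\cF=\La_U$.

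Next I would apply Lemma~\ref{44}.1 to write $C_D^0(j_!\La_U)=-\sum_{r=1}^{\min(d,n)}\sum_{|J|=r}C({j_J}_!\La_{D_J-B_J})$, where each summand is the (non-localized) characteristic class of the extension by zero of the constant sheaf on the open stratum $D_J-B_J$, pushed into $H_D^0(X,\cK_X)$. Each $D_J$ is smooth of dimension $d-r$ and $B_J$ is a simple normal crossings divisor in it, while $\La_{D_J-B_J}$ is unramified, hence in particular tame along $B_J$. The essential input is then the logarithmic Gauss--Bonnet formula for the constant sheaf, identifying $C({j_J}_!\La_{D_J-B_J})=(-1)^{d-r}c_{d-r}(\Omega^1_{D_J/k}(\log B_J))\cap[D_J]$; this is the tame, rank-one instance of the characteristic-class computation, and it is where the geometry genuinely enters.

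Finally I would substitute and match signs against Lemma~\ref{44}.2. Since $D_J$ has dimension $d-r$ we have $(-1)^{d+r}=(-1)^{d-r}$, so multiplying the identity of Lemma~\ref{44}.2 by $(-1)^d$ produces precisely $-\sum_{r}\sum_{|J|=r}(-1)^{d-r}c_{d-r}(\Omega^1_{D_J/k}(\log B_J))\cap[D_J]$, which agrees term by term with the expression for $C_D^0(j_!\La_U)$ obtained above. Multiplying through by ${\rm rk}(\cF)$ then yields the stated formula. I expect the main obstacle to be the middle step: establishing (or correctly citing) the logarithmic Gauss--Bonnet formula for the characteristic class of the constant sheaf on each stratum, and keeping the Gysin pushforwards $H^0(D_J,\cK_{D_J})\to H_D^0(X,\cK_X)$ compatible with the Chow-group identity $[D_J]\in CH_0(D)$ used in Lemma~\ref{44}.2, together with the careful sign bookkeeping that this entails.
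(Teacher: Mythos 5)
Your proposal is correct and follows essentially the same route as the paper: reduce to the constant sheaf via the vanishing $C_D^{00}(j_!\cF)=0$ of Lemma \ref{GOS-lemma4}, expand $C_D^0(j_!\La_U)$ over the strata by Lemma \ref{44}.1, identify each term $C({j_J}_!\La_{D_J-B_J})=(-1)^{d-r}c_{d-r}(\Omega^1_{D_J/k}(\log B_J))\cap[D_J]$ (the paper cites \cite[Corollary 2.2.5.1]{AS} for exactly this logarithmic Gauss--Bonnet input you flag as the essential step), and conclude with the sign-matching of Lemma \ref{44}.2.
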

\begin{proof}
By Lemma \ref{GOS-lemma4} and the assumption that $\cF$ is tamely ramified
along $D$,
we obtain an equality
$C_D^0(j_! \cF)-{\rm rk}(\cF) \cdot C_D^0(j_! \La_U)=0.$
Therefore the assertion is reduced to an equality
$ C_D^0(j_! \La_U)=(-1)^d \cdot c_d(\Omega^1_{X \slash k}({\rm log} D)-\Omega^1_{X \slash k})_D^X \cap [X].$
By Lemma \ref{44}.1, the following equality holds
$C_D^0({j_!} \La_U)=-\Sigma_{r=1}^{{\rm min}(d,n)} \Sigma_{|J|=r, J \subseteq I} C({j_J}_! \La_{D_J - B_J})$ in $H_D^0(X,\cK_X).$
By \cite[Crollary 2.2.5.1]{AS}, we acquire an equality
$C({j_J}_! \La_{D_J-B_J})=(-1)^{d-r}c_{d-r}(\Omega^1_{D_J \slash k}({\rm log}B_J)) \cap [D_J].$
Hence the assertion follows from Lemma \ref{44}.2.
\end{proof}

\subsection{Pull-back}
\label{section:pullback}
\noindent
In this subsection,
we will prove the compatibility
of the refined localized characteristic class with pull-back.
Let
$X$ and $Y$ be schemes over $k$, $U \subseteq X$ and $V \subseteq Y$ open dense subschemes smooth of dimension $d$ over $k,$ 
and $S:=X \backslash U$ and $T:=Y \backslash V$ the complements
respectively.
We consider a cartesian diagram
\[\xymatrix{
V \ar[r]^{j_V} \ar[d]^{f} & Y \ar[d]^{\bar{f}} & T \ar[l] \ar[d] \\
U \ar[r]^{j} & X & S \ar[l] \\
}
\]where 
$\bar{f}:Y \longrightarrow X$
is a proper morphism and $f:V \longrightarrow U$ is a finite flat morphism.

Let $C \subset U \times U$ be a closed subscheme purely of dimension $d.$ Let $\bar{C}$ be the closure of 
$C$ in $X \times X$, $C' \subset V \times V$ the inverse image of $C \subset U \times U$ by $f \times f:V \times V \longrightarrow
U \times U$ and
$\bar{C}'$ the closure of $C'$ in $Y \times Y$. 
We also assume $C=\bar{C} \cap (X \times U).$ Let $j_C:C \longrightarrow \bar{C}$ and $j_{C'}:C' \longrightarrow
\bar{C'}$ denote the open immersions.
We consider the following cartesian diagram
\[\xymatrix{
Y \times Y \backslash \bar{C'} \ar[r]^{\bar{g'}\!\!\!\!\!}\ar[d] &
Y \times Y \ar[d]^{\bar{f} \times \bar{f}} \\
X \times X \backslash \bar{C} \ar[r]^{\bar{g}\!\!\!\!\!} &
X \times X \\
}
\]where ${\bar{g}:X \times X} \backslash \bar{C} \longrightarrow {X \times X}$ and
$\bar{g}':{Y \times Y} \backslash \bar{C'} \longrightarrow {Y \times Y}$ are the open immersions.

Let $\cal{F}$ be a smooth  $\La$-sheaf on $U$ and $u$ a cohomological correspondence on $C$.
We put $\cF_V$ =$f^\ast \cal{F}$ on $V,$
$\bar{\cH}:=R{\cH}om({\rm pr}_2^\ast {j}_! \cF,R{\rm pr}_1^!{j}_! \cF)$ on $X \times X$ and
$\bar{\cH'}:=R{\cH}om({\rm pr}_2^\ast {j_V}_! \cF_V,R{\rm pr}_1^!{j_V}_! \cF_V)$
on $Y \times Y$ respectively.

We define a map
\begin{align}\label{pull}
{\bar{f}}^\ast ({j}_!\cK_U \otimes \delta_X^\ast R{\bar{g}}_\ast \La)
\longrightarrow
{j_V}_!\cK_V \otimes \delta_Y^\ast R{\bar{g'}}_\ast \La\end{align}
to be the composition of the following maps
$${\bar{f}}^\ast ({j}_!\cK_U \otimes \delta_X^\ast R{\bar{g}}_\ast \La)
\longrightarrow
{\bar{f}}^\ast {j}_!\cK_U \otimes \delta_Y^\ast R{\bar{g'}}_\ast \La
\longrightarrow
{j_V}_!\cK_V \otimes \delta_Y^\ast R{\bar{g'}}_\ast \La$$
where the first map is induced by
the base change map
$(\bar{f} \times \bar{f})^\ast R{\bar{g}}_\ast \La \longrightarrow R{\bar{g'}}_\ast \La$ and the second map is induced by an isomorphism $f^\ast \cK_U \simeq \cK_V$ by the assumption that $U,V$ are smooth schemes of the same dimension.
The map (\ref{pull}) induces the pull-back
\begin{align}\label{pp}
\bar{f}^\ast:H_{\bar{C} \cap S}^0(X,{j}_!\cK_U \otimes \delta_X^\ast R{\bar{g}}_\ast \La)
\longrightarrow
H_{\bar{C'} \cap T}^0(Y,{j_V}_!\cK_V \otimes \delta_Y^\ast R{\bar{g'}}_\ast \La).
\end{align}

\begin{proposition}(Pull-back)\label{29}
Let the notation be as above. 
Then we have an equality
$$ C_{T,!}^0({j_V}_!{\cal{F}}_V,\bar{C}',{j_{C'}}_!(f \times f) ^\ast u)=
\bar{f} ^\ast
C_{S,!}^0(j_!{\cal{F}},\bar{C},{j_C}_!u)$$
in $H_{{\bar{C}'} \cap T}^0(Y,{j_V}_!{\cK_V} \otimes \delta_Y ^\ast R\bar{g}'_\ast \La)$.

\end{proposition}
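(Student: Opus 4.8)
The plan is to factor the refined localized characteristic class of Definition \ref{refined} into the three elementary operations from which it is built and to verify compatibility of the pull-back $\bar{f}^\ast$ with each of them separately. Recall that $C_{S,!}^0(j_!\cF,\bar{C},{j_C}_!u)$ is produced from the zero-extension ${j_C}_!u \in H_{\bar{C}}^0(X \times X,\bar{\cH})$ by applying first the localization map ${\rm loc}_{X,C,\cF}$ of \eqref{loc map} and then the composite $e\cdot\delta_X^\ast$ of the diagonal pull-back and the evaluation map \eqref{delta2}. I would establish a commutative ladder whose three rungs are these operations and whose vertical maps are the relevant pull-backs, so that tracing ${j_C}_!u$ across the top and then down agrees with tracing it down and then across the bottom.

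First I would construct the pull-back at the level of $\bar{\cH}$-cohomology. Since $\cF_V=f^\ast\cF$ and $f$ is finite flat between smooth schemes of the same dimension $d$, there is a canonical map $(\bar{f}\times\bar{f})^\ast\bar{\cH}\longrightarrow\bar{\cH'}$, constructed in the manner of the pull-back of a cohomological correspondence recalled in subsection 2.1 (using $f^\ast\longrightarrow Rf^!$ and the identification ${\bf D}\cF_V\simeq f^\ast{\bf D}\cF$). As $(\bar{f}\times\bar{f})(\bar{C'})\subseteq\bar{C}$, this induces $(\bar{f}\times\bar{f})^\ast:H_{\bar{C}}^0(X\times X,\bar{\cH})\longrightarrow H_{\bar{C'}}^0(Y\times Y,\bar{\cH'})$. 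The key point is the identity
$$(\bar{f}\times\bar{f})^\ast\,{j_C}_!u={j_{C'}}_!(f\times f)^\ast u$$
in $H_{\bar{C'}}^0(Y\times Y,\bar{\cH'})$. I would prove it by restriction to the open part: by Corollary \ref{zero} the right-hand side restricts on $C'$ to $(f\times f)^\ast u$, while the left-hand side restricts to the pull-back of $j_C^\ast({j_C}_!u)=u$, which is $(f\times f)^\ast u$ by the very definition of the pull-back of a cohomological correspondence. Both sides are then zero-extensions of the same correspondence on $C'$ — the required cartesian condition on the $Y$-side being supplied by Corollary \ref{zero}.1, as $\bar{f}$ is proper — so the uniqueness in Corollary \ref{zero} forces them to agree.

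Next I would verify that ${\rm loc}$ commutes with these pull-backs. The map ${\rm loc}_{X,C,\cF}$ is the composite of the canonical map induced by $\La\longrightarrow R\bar{g}_\ast\La$ with the isomorphism of Lemma \ref{lll1}. The base change map $(\bar{f}\times\bar{f})^\ast R\bar{g}_\ast\La\longrightarrow R\bar{g}'_\ast\La$ attached to the cartesian square relating $\bar{g}$ and $\bar{g}'$ makes the two canonical maps compatible with $(\bar{f}\times\bar{f})^\ast$, and the isomorphisms of Lemma \ref{lll1} for $(X,C,\cF)$ and $(Y,C',\cF_V)$ arise from the same localization-sequence and projection-formula mechanism (both $\cF$ and $\cF_V$ being smooth), hence are themselves compatible with pull-back. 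Thus ${\rm loc}_{Y,C',\cF_V}\circ(\bar{f}\times\bar{f})^\ast=(\bar{f}\times\bar{f})^\ast\circ{\rm loc}_{X,C,\cF}$. Finally, for $e\cdot\delta_X^\ast$, the diagonals $\delta_X,\delta_Y$ sit in the cartesian square with vertical maps $\bar{f}\times\bar{f}$ and $\bar{f}$, so $\delta_X^\ast$ and $\delta_Y^\ast$ are exchanged by base change, and the evaluation map \eqref{delta2} over $X$ is compatible with its analogue over $Y$ because both descend from the canonical evaluation $\cF\otimes^L{\bf D}\cF\longrightarrow\cK_U$ and its pull-back $\cF_V\otimes^L{\bf D}\cF_V\longrightarrow\cK_V$, under the identification $f^\ast\cK_U\simeq\cK_V$ already built into the map \eqref{pull}. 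Assembling the three commutative squares and evaluating them on ${j_C}_!u$ yields the asserted equality.

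I expect the main obstacle to be the first step, the identity $(\bar{f}\times\bar{f})^\ast{j_C}_!u={j_{C'}}_!(f\times f)^\ast u$. The delicate point is that two a priori different pull-backs must be reconciled: the pull-back of a cohomological correspondence, defined for the finite flat map $f:V\longrightarrow U$ of smooth schemes, and the pull-back on cohomology with support in $\bar{C}$, defined for the proper map $\bar{f}:Y\longrightarrow X$ with $X$ and $Y$ possibly singular. Controlling the interaction of $Rf^!$ with the zero-extension ${j_C}_!$ across the boundary, and checking that the restriction of $(\bar{f}\times\bar{f})^\ast{j_C}_!u$ to $V\times V$ is genuinely $(f\times f)^\ast u$, is where the argument demands the most care; once that is in hand, the uniqueness of zero-extension closes the step cleanly.
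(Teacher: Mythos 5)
Your proposal is correct and follows essentially the same route as the paper: the paper's proof is precisely the commutative ladder you describe (pull-back on $\bar{\cH}$-cohomology, then ${\rm loc}$, then $e\cdot\delta^\ast$), combined with the identity $(\bar{f}\times\bar{f})^\ast\,{j_C}_!u={j_{C'}}_!(f\times f)^\ast u$, which the paper simply cites from the proof of Proposition 2.1.9 of \cite{AS} while you sketch its verification via the uniqueness of the zero-extension in Corollary \ref{zero}. The only point to watch in that sketch is that the pulled-back class is a priori supported on $(\bar{f}\times\bar{f})^{-1}(\bar{C})\supseteq\bar{C'}$, so one must check it actually lies in $H^0_{\bar{C'}}$ before invoking uniqueness, but this is exactly the content of the cited result in \cite{AS}.
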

\begin{proof}
We consider the commutative diagram
\[\xymatrix{
H_{\bar{C'}}^0(Y \times Y,\bar{\cH'}) \ar[r]^{\!\!\!\!\!\!\!\!\!\!\!\!\!\!\!\!\!\!{\rm loc}_{Y,C',\cF_V}}
&
H_{\bar{C'} \backslash C'}^0(Y \times Y,\bar{\cH'} \otimes {R\bar{g'}}_\ast \La) \ar[r]^{\!\!\!\!\!e \cdot \delta_Y ^\ast}&
H_{\bar{C'} \cap T}^0(Y,{j_V}_! \cK_V \otimes \delta_Y ^\ast {R\bar{g'}}_\ast \La) \\
H_{\bar{C}}^0(X \times X,\bar{\cH}) \ar[r]^{\!\!\!\!\!\!\!\!\!\!\!\!\!\!{\rm loc}_{X,C,\cF}}\ar[u]^{(\bar{f} \times \bar{f})^\ast}
&
H_{\bar{C} \backslash C}^0(X \times X,\bar{\mathcal{H}} \otimes {R\bar{g}}_\ast \La)
\ar[u]^{(\bar{f} \times \bar{f})^\ast}\ar[r]^{\!\!\!\!\!e \cdot \delta_X ^\ast}&
H_{\bar{C} \cap S}^0(X,{j}_! \cK_U \otimes \delta_X ^\ast {R\bar{g}}_\ast \La). \ar[u]^{\bar{f} ^\ast}\\
}
\] The assertion follows from this commutative diagram, 
${j_{C'}}_!(f \times f)^\ast u=(\bar{f} \times \bar{f})^\ast
({j_C}_!u)$ by \cite[in the proof of Proposition 2.1.9]{AS} and Definition \ref{refined}.
\end{proof}

We keep the same notation as above. 
Let $\delta_U:U \longrightarrow U \times U$
denote the diagonal map.
In the following, 
we consider the case where $C$ is the diagonal $\delta_U(U).$ 
Further, we assume that $f:V
\longrightarrow U$ is a finite Galois \'{e}tale  morphism of
Galois group $G$. Let $u$ be an endomorphism of $\cF$. Given $\sigma
\in G$, let $\Gamma_\sigma \subset V \times V$ be the graph of $\sigma:V \longrightarrow V.$
Then we have $V \times_{U} V= \coprod_{\sigma \in G}
{\Gamma_\sigma}, $ since $f$ is a finite  Galois \'{e}tale morphism.
Let $\bar{\Gamma}_\sigma$  be the closure of $\Gamma_\sigma$ in
$Y \times Y.$ Let $j_{\sigma}:\Gamma_{\sigma} \longrightarrow
\bar{\Gamma}_\sigma$ denote the open immersion. For $\sigma \in G$,
let $\sigma^\ast:\sigma^\ast f^\ast \cF \longrightarrow f^\ast \cF$
be the canonical map. We consider the composite $f^\ast(u) \circ
\sigma ^\ast:\sigma^\ast f^\ast \cF \longrightarrow f^\ast \cF$ as a
cohomological correspondence of $f^\ast \cF$ on the graph
$\Gamma_\sigma \subset V \times V.$ We have the pull-back 
$$\bar{f}^\ast:H_S^0(X,{j}_! \cK_U)
\longrightarrow
H_T^0(Y,{j_V}_! \cK_V).$$

We assume that $\sigma \not=1.$
Let ${\bar{g'}}_{\sigma}:Y \times Y \backslash \bar{\Gamma}_\sigma \longrightarrow Y \times Y$ be the open immersion.
Since the graph $\Gamma_{\sigma}$ is smooth over $k$ and the intersection ${\Gamma_\sigma} \cap V$
in $V \times V$ is empty, we acquire
the following isomorphism by the long exact sequence (\ref{exact})
in the case where $C=\Gamma_{\sigma}$
$$H_{\bar{\Gamma}_\sigma \cap T}^0(Y,{j_V}_! \cK_V) \simeq
 H_{ \bar{\Gamma}_\sigma \cap T}^0(Y,{j_V}_! \cK_V \otimes \delta_Y ^\ast 
{R{{\bar{g'}}_{\sigma}}}_\ast \La).$$ 
By this isomorphism, we obtain a class
{\small
$$C_{T,!}^{00}({j_V}_! \cF_V,
\bar{\Gamma}_\sigma,{j_\sigma}_!(f ^\ast(u) \circ \sigma ^\ast))
:=C_{T,!}^{0}({j_V}_! \cF_V,
\bar{\Gamma}_\sigma,{j_\sigma}_!(f ^\ast(u) \circ \sigma ^\ast))
-{\rm rk}(\cF_V) \cdot C_{T,!}^{0}({j_V}_! \La_V,
\bar{\Gamma}_\sigma,{j_\sigma}_!\sigma ^\ast)$$}
in $H_{ \bar{\Gamma}_\sigma \cap T}^0(Y,{j_V}_! \cK_V) \simeq
 H_{ \bar{\Gamma}_\sigma \cap T}^0(Y,{j_V}_! \cK_V \otimes \delta_Y ^\ast 
{R\bar{g'}_{\sigma}}_\ast \La).$
Note that 
the class $C_{T,!}^{0}({j_V}_! \cF_V,
\bar{\Gamma}_\sigma,{j_\sigma}_!\\(f ^\ast(u) \circ \sigma ^\ast))$
is equal to the refined characteristic class
$C_{!}^{0}({j_V}_! \cF_V,
\bar{\Gamma}_\sigma,{j_\sigma}_!(f ^\ast(u) \circ \sigma ^\ast))$
recalled in subsection 2.2 by Definition \ref{refined}.
\begin{corollary}\label{30}
Let the notation be as above. Further we assume that $X, Y$ are smooth over $k.$
 Then, we have an equality
$$
\bar{f} ^\ast
C_{S,!}^{00}({j}_! \mathcal {F},{j}_!u)
=\sum_{\sigma \in G}C_{T,!}^{00}({j_V}_! \cF_V,
\bar{\Gamma}_\sigma,{j_\sigma}_!(f ^\ast(u) \circ \sigma ^\ast))$$
in
$H_T^0(Y,{j_V}_!\mathcal {K}_V).$
\end{corollary}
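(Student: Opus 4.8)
The plan is to derive the identity from the pull‑back compatibility of Proposition \ref{29}, applied to the diagonal correspondence $C=\delta_U(U)$. First I would note that, because $f$ is finite Galois \'etale, the inverse image $(f\times f)^{-1}(\delta_U(U))$ is exactly $V\times_U V=\coprod_{\sigma\in G}\Gamma_\sigma$, so that $\bar C'=\overline{V\times_U V}=\bigcup_\sigma\bar\Gamma_\sigma$, and that the restriction of $(f\times f)^\ast u$ to the component $\Gamma_\sigma$ is the correspondence $f^\ast(u)\circ\sigma^\ast$ recorded before the statement. Proposition \ref{29}, together with the same identity for $\La_U$, then gives
$$\bar f^\ast C_{S,!}^0(j_!\mathcal{F},\delta_U(U),{j_C}_!u)=C_{T,!}^0({j_V}_!\cF_V,\bar C',{j_{C'}}_!(f\times f)^\ast u)$$
as an equality in the group twisted by $R\bar g'_\ast\La$ supported on $\bar C'$.

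Next I would split this right‑hand class over the components. Since $C'=\coprod_\sigma\Gamma_\sigma$ is a disjoint union of closed subschemes, Corollary \ref{zero} shows that the zero‑extension decomposes as ${j_{C'}}_!(f\times f)^\ast u=\sum_\sigma {j_\sigma}_!(f^\ast(u)\circ\sigma^\ast)$ in $H^0_{\bar C'}(Y\times Y,\bar\cH')$; as the maps ${\rm loc}_{Y,C',\cF_V}$ and $e\cdot\delta_Y^\ast$ defining the refined localized characteristic class are additive, this yields
$$C_{T,!}^0({j_V}_!\cF_V,\bar C',{j_{C'}}_!(f\times f)^\ast u)=\sum_{\sigma\in G}C_{T,!}^0({j_V}_!\cF_V,\bar\Gamma_\sigma,{j_\sigma}_!(f^\ast(u)\circ\sigma^\ast))$$
as images in the $\bar C'$‑twisted group, and likewise for $\La_V$. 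Combining the two steps reduces the corollary, at the level of the twisted groups, to Proposition \ref{29} and this additivity.

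To descend to $H_T^0(Y,{j_V}_!\cK_V)$ and to the classes $C^{00}$, I would treat $\sigma=1$ and $\sigma\neq1$ separately. For $\sigma=1$ the component is $\bar\Gamma_1=\delta_Y(Y)$, and by Lemma \ref{lem:1} and Definition \ref{aho1} the term $C_{T,!}^{00}({j_V}_!\cF_V,\bar\Gamma_1,{j_\sigma}_!f^\ast(u))$ is the unique lift through the injection into the diagonal‑twisted group of $C_{T,!}^0(\cF_V,f^\ast u)-{\rm Tr}(f^\ast u)\,C_{T,!}^0(\La_V)$, with ${\rm Tr}(f^\ast u)={\rm Tr}(u)$ since $f$ is \'etale. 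For $\sigma\neq1$ the freeness of the $G$‑action gives $\Gamma_\sigma\cap\delta_V(V)=\emptyset$, so the sequence \eqref{exact} provides the isomorphism used in the statement, under which $C_{T,!}^{00}(\bar\Gamma_\sigma,\ldots)$ is identified with $C_{T,!}^0(\cF_V,\bar\Gamma_\sigma,\ldots)-{\rm rk}(\cF_V)\,C_{T,!}^0(\La_V,\bar\Gamma_\sigma,\sigma^\ast)$. Forming the difference of the $\cF$‑ and $\La$‑identities above and matching it against $\sum_\sigma C_{T,!}^{00}(\bar\Gamma_\sigma,\ldots)$, the claimed equality in $H_T^0(Y,{j_V}_!\cK_V)$ will follow from the injectivity of Lemma \ref{lem:1} on $Y$, provided the trace and rank normalizations cohere.

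The hard part is precisely this reconciliation. Carrying out the subtraction leaves a residual term equal to $({\rm rk}(\cF_V)-{\rm Tr}(u))\sum_{\sigma\neq1}C_{T,!}^0(\La_V,\bar\Gamma_\sigma,\sigma^\ast)$, which vanishes automatically when $u={\rm id}$ (then ${\rm Tr}(u)={\rm rk}(\cF_V)$) but for general $u$ forces the vanishing $\sum_{\sigma\neq1}C_{T,!}^0(\La_V,\bar\Gamma_\sigma,\sigma^\ast)=0$. I would prove this using that, for the constant sheaf, $C_{T,!}^0(\La_V,\bar\Gamma_\sigma,\sigma^\ast)$ agrees with the refined characteristic class $C_!^0$ (as noted before the statement) and is computed by the boundary‑localized self‑intersection $\delta_Y^![\bar\Gamma_\sigma]$; since $\sum_\sigma[\bar\Gamma_\sigma]=[\bar C']$ pulls back from $[\delta_X(X)]$ along $\bar f\times\bar f$, compatibility of the Gysin map $\delta_Y^!$ with the proper base change forces $\sum_\sigma\delta_Y^![\bar\Gamma_\sigma]=\bar f^\ast\delta_X^![\delta_X(X)]=\delta_Y^![\bar\Gamma_1]$, whence the off‑diagonal sum vanishes. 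This is the step where the freeness of the Galois action and the purely geometric nature of the constant‑sheaf class are indispensable.
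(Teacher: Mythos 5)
Your first two steps --- applying Proposition \ref{29} to the diagonal, decomposing $(f\times f)^\ast u=\sum_{\sigma\in G}f^\ast(u)\circ\sigma^\ast$ over the components $\Gamma_\sigma$ of $V\times_U V$, and descending through the injection into the twisted group --- are exactly the paper's (very terse) argument. The problem is your last step. You correctly observe that reconciling the two $C^{00}$-normalizations leaves the residual term $(\mathrm{rk}(\cF_V)-\mathrm{Tr}(u))\sum_{\sigma\neq1}C^0_{T,!}({j_V}_!\La_V,\bar\Gamma_\sigma,{j_\sigma}_!\sigma^\ast)$, but the vanishing you then try to prove, $\sum_{\sigma\neq1}C^0_{T,!}({j_V}_!\La_V,\bar\Gamma_\sigma,{j_\sigma}_!\sigma^\ast)=0$, is false in general. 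For $\sigma\neq1$ these classes are the Lefschetz--Verdier local terms of $\sigma$ acting on ${j_V}_!\La_V$; by the Abbes--Saito computation invoked in the proof of Theorem \ref{43}, their images in $H^0(Y,\cK_Y)$ equal $-s_{V/U}(\sigma)$, and since $\sum_{\sigma\in G}s_{V/U}(\sigma)=0$ the off-diagonal sum equals $s_{V/U}(1)$, which is nonzero whenever $V\to U$ is ramified along the boundary.

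Two things go wrong in your argument for that vanishing. First, $C^0_{T,!}({j_V}_!\La_V,\bar\Gamma_\sigma,{j_\sigma}_!\sigma^\ast)$ is not the self-intersection class $\delta_Y^![\bar\Gamma_\sigma]$: the correspondence is the zero-extension ${j_\sigma}_!\sigma^\ast$ acting on ${j_V}_!\La_V$, not on $\La_Y$, and it is precisely this difference that turns the class into a nontrivial boundary invariant rather than a cycle intersection. Second, even at the cycle level the identity $\bar f^\ast\delta_X^![\delta_X(X)]=\delta_Y^![\delta_Y(Y)]$ fails: Euler classes are not compatible with pullback along a map that is ramified on the boundary, and the defect is exactly the wild discriminant $\mathrm{d}^{\mathrm{log}}_{V/U}$ appearing in the final corollary of Section 5. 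So your argument establishes the corollary only when $\mathrm{Tr}(u)=\mathrm{rk}(\cF)$ (in particular for $u=\mathrm{id}$, which is the only case used in Theorem \ref{43}). What your bookkeeping in fact exposes is that the paper's own one-line deduction ``the assertion follows from Definition \ref{aho1}'' silently uses $\mathrm{rk}(\cF_V)$ on the off-diagonal components where the pulled-back left-hand side produces $\mathrm{Tr}(u)$; that discrepancy is real, but it cannot be removed by the vanishing you propose.
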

\begin{proof}
By Proposition \ref{29} and $(f \times f)^\ast u=\sum_{\sigma \in G}f^\ast (u) \circ \sigma^\ast$ by \cite[the proof of Corollary 2.1.11]{AS}, we obtain an equality
$$
\bar{f}^\ast C_{S,!}^{00}({j}_! \cF,{j}_!u)=
\sum_{\sigma \in G}C_{T,!}^{00}({j_V}_! \cF_V,\bar{\Gamma}_\sigma,{j_\sigma}_!(f ^\ast(u) \circ \sigma ^\ast))
$$
in $H_{T}^{0}(Y,{j_V}_!\cK_V \otimes \delta_Y^\ast R\bar{g'}_\ast
\La).$
Since the canonical map $H_T^0(Y,{j_V}_!\cK_V)
\longrightarrow H_{T}^{0}(Y,{j_V}_!\cK_V \otimes \delta_Y^\ast
R\bar{g'}_\ast \La)$ is injective, the assertion follows from Definition \ref{aho1}.
\end{proof}

\section{Proof of the localized Abbes-Saito formula}

In this section, we give a proof of the localized Abbes-Saito
 formula assuming
the strong resolution of singularities.
Let $X$ be a smooth scheme of dimension $d$ over a perfect field $k$ and $j:U
\longrightarrow X$ be an open immersion with dense image. 
Let $S$ denote the complement $X \backslash U.$
We assume that $l$ denotes a prime number invertible in $k$ and
$E$ denotes a finite extension of $\mathbb{Q}_l$.
Let $\cF$ be a smooth $E$-sheaf on
$U$. 
Let $E_0$
denote $E \cap \mathbb{Q}(\mu_{p^{\infty}}).$
The naive Swan class
${\rm Sw}^{{\rm naive}}(\cF) \in CH_0(S)_{E_0}$
is defined in
\cite[Definition 4.2.2]{KS} and recalled in \cite[subsection 3.2]{AS}.
\begin{theorem}(the localized Abbes-Saito formula)\label{43}
Let the notation and the assumption be as above. Further, we assume the strong resolution of singularities. Then we have
$$C_{S}^{00}({j}_! \cF)
=-{\rm cl}({\rm Sw}^{{\rm naive}}(\cF))
$$ in $H_S^0(X,\cK_X)$
where ${\rm cl}:CH_0(S)_{E_0} \longrightarrow H_S^0(X,\cK_X)$
denotes the cycle class map.
\end{theorem}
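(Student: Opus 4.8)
The plan is to reduce the computation of $C_S^{00}(j_!\cF)$ to the tamely ramified case by pulling back along a Galois cover, and then to descend. First I would invoke the strong resolution of singularities to choose a finite Galois \'etale cover $f:V\longrightarrow U$ with group $G$ trivializing $\cF$ (after reduction to finite coefficients, so that the monodromy is finite), and extend it to a proper morphism $\bar f:Y\longrightarrow X$ with $Y$ smooth of dimension $d$ and $T:=Y\backslash V$ a divisor with simple normal crossings. Writing $\cF_V:=f^\ast\cF$, the sheaf $\cF_V$ is then constant, in particular tamely ramified along $T$, and a geometric stalk carries the monodromy representation $M$ of $G$. This is precisely the situation of subsection \ref{section:pullback} and of Corollary \ref{30}, taken with $C=\delta_U(U)$ the diagonal and $u={\rm id}_\cF$.

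Next I would apply the pull-back compatibility. Corollary \ref{30} for $u={\rm id}_\cF$ gives
\[
\bar f^\ast C_{S,!}^{00}(j_!\cF)=\sum_{\sigma\in G}C_{T,!}^{00}(\jvq\cF_V,\bar\Gamma_\sigma,{j_\sigma}_!\sigma^\ast)
\]
in $H_T^0(Y,\jvq\cK_V)$. Taking images in $H_T^0(Y,\cK_Y)$, the term $\sigma=1$ is supported on the diagonal and equals $C_T^{00}(\jvq\cF_V)$; since $Y$ is smooth, $T$ is a divisor with simple normal crossings, and $\cF_V$ is tamely ramified along $T$, this term vanishes by Lemma \ref{GOS-lemma4}. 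Hence
\[
\bar f^\ast C_S^{00}(j_!\cF)=\sum_{1\neq\sigma\in G}C_T^{00}(\jvq\cF_V,\bar\Gamma_\sigma,{j_\sigma}_!\sigma^\ast),
\]
so everything is concentrated on the off-diagonal graphs $\bar\Gamma_\sigma$.

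Finally I would descend and match with the Swan class. As $\bar f$ is proper and generically finite of degree $|G|$ between smooth varieties of the same dimension, the projection formula gives $\bar f_\ast\bar f^\ast=|G|$ on $H_S^0(X,\cK_X)$, so applying $\bar f_\ast$ yields
\[
|G|\cdot C_S^{00}(j_!\cF)=\bar f_\ast\Bigl(\sum_{1\neq\sigma\in G}C_T^{00}(\jvq\cF_V,\bar\Gamma_\sigma,{j_\sigma}_!\sigma^\ast)\Bigr).
\]
The right-hand side is evaluated by the localized Lefschetz-Verdier trace formula (Proposition \ref{prop:2}): the local term along each $\bar\Gamma_\sigma$ is weighted by ${\rm Tr}(\sigma\mid M)$, and by the definition of the naive Swan class of Kato-Saito recalled in \cite[subsection 3.2]{AS} (see also \cite{KS}) the resulting push-forward equals $-|G|\cdot{\rm cl}({\rm Sw}^{{\rm naive}}(\cF))$. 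Dividing by $|G|$, which is permitted over the coefficient ring $E_0$, gives $C_S^{00}(j_!\cF)=-{\rm cl}({\rm Sw}^{{\rm naive}}(\cF))$.

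The hard part will be this last identification, namely matching the graph-supported localized characteristic classes $C_T^{00}(\jvq\cF_V,\bar\Gamma_\sigma,{j_\sigma}_!\sigma^\ast)$, produced by the cohomological-correspondence formalism of Abbes-Saito, with the Kato-Saito naive Swan class defined through alterations and logarithmic blow-ups. This demands a careful comparison of the two constructions together with their sign and normalization conventions, a check that the rank-correction distinguishing $C_S^{00}$ from $C_S^0$ corresponds to the one built into ${\rm Sw}^{{\rm naive}}$, and a verification that the resulting class is independent of the chosen cover $f$; the proper push-forward compatibility of Proposition \ref{prop:2} is the technical mechanism that makes this comparison run.
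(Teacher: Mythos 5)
Your overall architecture coincides with the paper's: reduce modulo $\lambda$, trivialize the reduction by a finite Galois \'etale cover $f:V\to U$ of group $G$, compactify to a smooth $Y$ with simple normal crossing boundary using strong resolution, apply Corollary \ref{30} with $u={\rm id}_{\cF}$, kill the $\sigma=1$ term by the tame vanishing of Lemma \ref{GOS-lemma4}, then push forward and divide by $|G|$. (One small imprecision: the cover only trivializes the reduction $\bar{\cF}$ modulo $\lambda$, so $\cF_V$ is not constant; but since wild inertia is pro-$p$ and the kernel of ${\rm GL}_n(\mathcal{O})\to {\rm GL}_n(\mathcal{O}/\lambda)$ is pro-$l$, $\cF_V$ is tamely ramified along the boundary, which is all that Lemma \ref{GOS-lemma4} requires.)

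The genuine gap sits exactly where you flag ``the hard part'': the evaluation of the off-diagonal terms $C_{T,!}^{00}({j_V}_!\cF_V,\bar{\Gamma}_\sigma,{j_\sigma}_!\sigma^\ast)$ for $\sigma\neq 1$. You propose to extract these from the localized Lefschetz--Verdier trace formula (Proposition \ref{prop:2}), but that proposition only expresses compatibility of the localized class with proper push-forward; it neither computes local terms on graphs nor enters the paper's proof of Theorem \ref{43} at all. The paper's mechanism is more direct: since $\sigma\neq 1$ acts freely on $V$, one has $\Gamma_\sigma\cap\delta_V(V)=\emptyset$, hence $\bar{\Gamma}_\sigma\cap Y=\bar{\Gamma}_\sigma\cap D$, and by the exact sequence \eqref{exact} the localized class supported in the boundary coincides with the ordinary refined characteristic class $C({j_V}_!\cF_V,\bar{\Gamma}_\sigma,{j_\sigma}_!\sigma^\ast)$. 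This is then evaluated by quoting the already-established global Abbes--Saito formula \cite[Theorem 3.3.1]{AS}, which gives $-s_{V/U}(\sigma)\,{\rm Tr}^{\rm Br}(\sigma:\bar{M})$ (a $0$-cycle class on the fixed locus times a Brauer trace, not merely a weight ${\rm Tr}(\sigma\mid M)$); the identity $\sum_{\sigma\in G}s_{V/U}(\sigma)=0$ then absorbs the rank-correction terms ${\rm rk}(\cF)\cdot s_{V/U}(\sigma)$ coming from the difference defining $C^{00}$, and the very definition of ${\rm Sw}^{\rm naive}(\cF)$ as $|G|^{-1}\sum_{\sigma}\bar{f}_\ast s_{V/U}(\sigma){\rm Tr}^{\rm Br}(\sigma:\bar{M})$ (up to sign) closes the computation. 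Without the reduction of the localized graph terms to ordinary characteristic classes and the appeal to \cite[Theorem 3.3.1]{AS}, your argument does not terminate; with them, it becomes the paper's proof.
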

\begin{proof}
Let $\mathcal{O}$ be the integer ring of $E$ 
and $\lambda$ the
maximal ideal of $\mathcal{O}.$
For a constructible $E$-sheaf $\cF$ on $X$, 
$\cF_{\mathcal{O}}$ denotes an $\mathcal{O}$-lattice 
and
$\cF_n$ denotes the reduction $\cF_{\mathcal{O}} \otimes_{\mathcal{O}} \mathcal{O}/\lambda^n.$ 
We put $\bar{\cF}=\cF_1$.

We take the
following cartesian diagram
 \[\xymatrix{
V \ar[r]^{j_V} \ar[d]^{f} & Y \ar[d]^{\bar{f}} & D \ar[l] \ar[d] \\
U \ar[r]^{j}& X & S \ar[l]\\
}
\]
where $f$ is a finite Galois \'{e}tale morphism of Galois group $G$
that trivializes the reduction $\bar{\cF} $ and $\bar{f}:Y \longrightarrow X$ is a proper
morphism. Since we assume the strong resolution of singularities, we may assume that $Y$ is smooth over $k$ and 
$D=\bigcup_{i \in I}D_i \subset Y$
is a divisor with simple normal crossings. We put $\cF_V:=f^\ast
\cF$ on $V.$

By Corollary \ref{30}, we have
\begin{equation}\label{eq0}
\bar{f}^\ast C_{S}^{00}({j}_!\cF)=\sum_{\sigma \in G}
C_{D}^{00}({j_V}_! \cF_V, \bar{\Gamma}_\sigma,{j_\sigma}_!\sigma ^\ast)
\end{equation}
in $H_D^0(Y,\cK_Y).$ Since we assume the strong resolution of singularities,
the condition in \cite[Theorem 3.3.1]{AS} is satisfied. Therefore we obtain an equality
 $$C({j_V}_!\cF_V,\bar{\Gamma}_\sigma,{j_\sigma}_!\sigma^\ast)
=-s_{V/U}(\sigma){\rm {\rm Tr}}^{\rm Br}(\sigma:\bar{M})$$ in
$H_{\bar{\Gamma}_\sigma \cap Y}^0(Y,\cK_Y)$ for $\sigma \not=1$
by loc.\ cit.
Since we have $\bar{\Gamma}_\sigma \cap Y
 =\bar{\Gamma}_\sigma \cap D$
for
$\sigma \not=1$, the canonical map
$H_{\bar{\Gamma}_\sigma \cap D}^0(Y,\cK_Y) \longrightarrow
H_{\bar{\Gamma}_\sigma \cap Y}^0(Y,\cK_Y)$
is an isomorphism. By this isomorphism and Definition \ref{refined},
we understand the following equalities
\begin{equation}\label{eq1}
C_{D}^0 ({j_V}_! \cF_V,\bar{\Gamma}_\sigma,{j_{\sigma}}_!\sigma
^\ast) =
C({j_V}_!\cF_V,\bar{\Gamma}_\sigma,{j_{\sigma}}_!\sigma^\ast) =
-s_{V/U}(\sigma){\rm {\rm Tr}}^{\rm Br}(\sigma:\bar{M})
\end{equation}
in $ H_{\bar{\Gamma}_\sigma \cap Y}^0(Y,\cK_Y)=H_{\bar{\Gamma}_\sigma \cap D}^0(Y,\cK_Y)$ where $C_{D}^0 ({j_V}_!
\cF_V,\bar{\Gamma}_\sigma,{j_\sigma}_!\sigma ^\ast)$ denotes the image of
the class
$C_{D,!}^0 ({j_V}_! \cF_V,\bar{\Gamma}_\sigma,{j_\sigma}_!\sigma ^\ast)$
under the canonical map $H_{\bar{\Gamma}_\sigma \cap D}^0(Y,{j_V}_!\cK_V) \longrightarrow H_{\bar{\Gamma}_\sigma \cap D}^0(Y,\cK_Y).$ 
By (\ref{eq0}), Lemma
\ref{GOS-lemma4}
(Here we use the strong resolution of singularities.)
 and (\ref{eq1}), we acquire equalities
$$\bar{f}^\ast C_{S}^{00}({j}_!\cF)=\sum_{\sigma \in G}
C_{D}^{00}({j_V}_! \cF_V, \bar{\Gamma}_\sigma,{j_\sigma}_!\sigma
^\ast)=\sum_{\sigma \in G,\sigma \not= 1} -s_{V/U}(\sigma)({\rm {\rm
Tr}}^{\rm Br}(\sigma:\bar{M})-{\rm rk}(\cF))
$$ in $H_{D}^{0}(Y, \cK_Y).$ Since we have $\sum_{\sigma \in G}s_{V/U}(\sigma)=0$, the following equality
holds
$$\bar{f}^\ast C_{S}^{00}({j}_!\cF)
=\sum_{\sigma \in G}-s_{V/U}(\sigma){\rm {\rm Tr}}^{\rm Br}(\sigma:\bar{M}).$$
Applying the functor $\bar{f}_\ast$, we obtain
$$|G| \cdot C_{S}^{00}({j}_!\cF)=\sum_{\sigma \in G}-\bar{f}_\ast s_{V/U}(\sigma){\rm {\rm Tr}}^{\rm Br}(\sigma:\bar{M})=-|G| \cdot {\rm Sw}^{{\rm naive}}(\cF).$$
Hence we have proved the required assertion.
\end{proof}
\begin{remark}Let the notation be as in Definition \ref{bakayarou}.
We expect that the following equality
$C_S^{{\rm log},00}(j_!\cF)=-{\rm cl}({\rm Sw}^{{\rm naive}}(\cF))$ holds
in $H_S^0(X,\cK_X).$
However we do not know a proof.
In the case where $\cF$ is a sheaf of rank 1
which is clean with respect to the boundary, we prove this equality
in \cite[Corollary 3.10]{T}.
\end{remark}
\begin{remark}Without assuming that $X$ is smooth over $k,$
we will define the localized characteristic class
$C_{S}^{0}({j}_! \cF) \in H_S^0(X,\cK_X)$ in Definition \ref{fin}
and prove the equality
$C_{S}^{00}({j}_! \cF)
=-{\rm cl}({\rm Sw}^{{\rm naive}}(\cF))$ 
in Corollary \ref{ten}.
\end{remark}

\section{Kato-Saito conductor formula in characteristic $p>0$}
In this section, we will prove the compatibility of the (logarithmic) localized
characteristic class with proper push-forward. This is a localized version of the Lefschetz-Verdier trace formula. As a corollary, we will prove the Kato-Saito conductor formula
 in characteristic $p>0.$ Originally the Kato-Saito conductor formula calculates the Swan
conductor of a Galois representation which
appears when we consider an $\ell$-adic sheaf on a proper smooth curve over a discrete valuation field
by the 0-cycle class (Kato 0-cycle class defined in \cite{K2} for a sheaf of rank 1
or Swan class)
on the boundary
which is produced by the wild ramification of 
the $\ell$-adic sheaf. 

We prove the compatibility of the logarithmic localized characteristic class
of a smooth $\La$-sheaf with proper push-forward.
 Let the notation be as in 
Lemma \ref{ll}. We write $\phi$
for the projection $f:(X \times X)' \longrightarrow X \times X$
in this section.
Moreover let $Z$ be a smooth scheme of dimension
$e$, $W$ an open subscheme of $Z$.  Let $\delta_Z:Z \longrightarrow Z \times Z$
and $\delta_W:W \longrightarrow W \times W$ be the diagonal closed immersions, and
$g_Z:Z \times Z \backslash \delta_Z(Z)
\longrightarrow Z \times Z$ and $g_W:W \times W \backslash \delta_W(W)
\longrightarrow W \times W$
the open immersions.
We consider a commutative diagram
\[\xymatrix{
U \ar[r]^{j'}\ar[dr]_{f_U}& V \ar[r]^{j_V}\ar[d]^{f} & X \ar[d]^{\bar{f}} & S \ar[d]\ar[l]\\
 & W \ar[r]^{j_W} & Z & T \ar[l] 
\\}
\]where the squares are cartesian,
$f:V \longrightarrow W$ is a
proper smooth morphism and $\bar{f}:X \longrightarrow Z$ is a
proper morphism.

We consider the following cartesian diagram
\[\xymatrix{
X'' \ar[r]\ar[d] &
(X \times X)' \ar[d]^{\phi}  &
(X \times X)' \backslash X'' \ar[l]_{g''}\ar[d]\\
X \times_{Z} X \ar[r]\ar[d] &
X \times X \ar[d]^{\bar{f} \times \bar{f}}
&
X \times X \backslash X \times _Z X \ar[l]\ar[d]\\
Z \ar[r]^{\!\!\!\!\delta_Z} &
Z \times Z & Z \times Z \backslash \delta_Z(Z) \ar[l]_{\!\!\!\!g_Z}. \\
}
\]
where ${g''}:(X \times X)' \backslash {X''} \longrightarrow
(X \times X)'$ is the open immersion.
Let $V''$ be the intersection $X'' \cap (V \times V)'$ in $(X \times X)',$
and $i''_{V}:V'' \longrightarrow (V \times V)'$
the closed immersion.
Let $\bar{h}:(X \times X)' \longrightarrow Z \times Z$
denote the projection.
We have $X' \subset X''$
and $V''$ is a smooth scheme of codimension $e$ in $(V \times V)'$
since the projection $h:(V \times V)' \longrightarrow
W \times W$ is a smooth morphism.
We consider the cartesian diagram
\begin{align}\label{chu}
\xymatrix{
V'' \ar[r]^{\!\!\!\!\!\!\!\!\!\!\!\!i''_V}\ar[d]^{h_W} &
(V \times V)' \ar[d]^{h} & (V \times V)' \backslash V'' \ar[l]_{g''_V}\ar[d]
\\
W \ar[r]^{\!\!\!\!\!\!\!\!\!\!\!\!\delta_W} &
W \times W & W \times W \backslash W \ar[l]_{g_W}\\}
\end{align}
where $g''_V$ and $g_W$
are the open immersions,
and $h$ and $h_W$ are the projections.

We define a map
\begin{align}\label{7.0}
R{\bar{f}}_\ast ({j_V}_! \cK_V \otimes {\delta'}^\ast Rg'_\ast\La)
\longrightarrow {j_W}_! \cK_W \otimes \delta_{Z}^\ast {Rg_{Z}}_\ast \La.
\end{align}
By the smooth base change theorem and the projection formula, we acquire isomorphisms
$${j_W}_!{Rf}_\ast (\cK_V \otimes {\delta'}_{V}^\ast {Rg''_V}_\ast \La)
\simeq {j_W}_!{Rf}_\ast (\cK_V \otimes f^\ast \delta_{W}^\ast {Rg_W}_\ast \La)
\simeq {j_W}_!{Rf}_\ast \cK_V \otimes \delta_{Z}^\ast {Rg_Z}_\ast \La.$$
Therefore we obtain an isomorphism
\begin{align}\label{7.1}
R{\bar{f}}_\ast {j_V}_! \cK_V \otimes \delta_{Z}^\ast R{g_{Z}}_\ast \La
\simeq
R{\bar{f}}_\ast ({j_V}_! \cK_V \otimes {\delta'}^\ast Rg''_\ast \La).
\end{align}
We define the map (\ref{7.0}) to be the composite of the following maps
{\small
$$
R{\bar{f}}_\ast ({j_V}_! \cK_V \otimes {\delta'}^\ast Rg'_\ast\La)
\longrightarrow
R{\bar{f}}_\ast ({j_V}_! \cK_V \otimes {\delta'}^\ast Rg''_\ast \La) 
\simeq
R{\bar{f}}_\ast {j_V}_! \cK_V \otimes \delta_{Z}^\ast R{g_{Z}}_\ast \La 
\longrightarrow
{j_W}_! \cK_W \otimes \delta_{Z}^\ast R{g_{Z}}_\ast \La$$}
where the first map is induced by the canonical map $Rg'_\ast\La \longrightarrow Rg''_\ast \La$ and the second isomorphism is (\ref{7.1})
and the third map is induced by the proper push-forward ${Rf}_\ast \cK_V
\longrightarrow \cK_W$.
Then the map (\ref{7.0}) induces the proper push-forward
\begin{equation}\label{ppush}
\bar{f}_\ast :H_{S}^0(X,{j_V}_! \cK_V \otimes {\delta'}^\ast Rg'_\ast\La)
\longrightarrow
H_{T}^0(Z,{j_W}_! \cK_W \otimes \delta_{Z}^\ast {Rg_{Z}}_\ast \La).
\end{equation}
\begin{lemma}\label{oopai}
The canonical map 
$$H_{X'' \backslash V''}^0((X \times
X)',\bar{\cH'} \otimes Rg''_\ast \La)
\longrightarrow
H_{X''}^0((X \times X)',\bar{\cH'} \otimes Rg''_\ast \La)$$
is an isomorphism.
\end{lemma}
\begin{proof}
It suffices to show that 
$H_{V''}^i((V \times V)',{\bar{\cH'}}_V \otimes {R{g''_V}}_\ast \La)=0$
for all $i$
by the localization sequence.
By the proper base change theorem and the cartesian diagram (\ref{chu}),
we acquire an isomorphism
$H_{V''}^i((V \times V)',{\bar{\cH'}}_V \otimes {R{g''_V}}_\ast \La)
\simeq
H^i(W,R\delta_{W}^!{R{h}}_\ast ({\bar{\cH'}}_V \otimes {R{g''_V}}_\ast \La)).$
We put $\cH_W:=
R\mathcal{H}om({\rm pr}_2^\ast R{f_U}_! \cF,R{\rm pr}_1^! R{f_U}_! \cF)$
on $W \times W.$
We write $\phi_V$ for the projection $f_V:(V \times V)' \longrightarrow V \times V$
in subsection 3.3.
The isomorphism ${R{\phi_V}}_\ast {\bar{\cH'}}_V \simeq \bar{\cH}_V$
by \cite[Lemma 2.2.4]{AS} and the Kunneth formula
induce an isomorphism $Rh_\ast {\bar{\cH'}}_V \simeq \cH_W.$
Since we have an isomorphism
$h^\ast {Rg_W}_\ast \La \simeq {Rg''_V}_\ast \La$
by the smooth base change theorem,
we acquire an isomorphism 
${R{h}}_\ast ({\bar{\cH'}}_V \otimes {R{g''_V}}_\ast \La) \simeq
\cH_W \otimes {Rg_W}_\ast \La$ by the projection formula.
Since $R^q{f_U}_!\cF$ is a smooth sheaf on $W$
for all $q$,
we obtain the following vanishing
$H^i(W,R\delta_{W}^!{R{h}}_\ast ({\bar{\cH'}}_V \otimes {R{g''_V}}_\ast \La))
\simeq
H^i(W,R\delta_{W}^!(\cH_W \otimes {Rg_W}_\ast \La))=0$
again by the projection formula.
Hence we have proved the required assertion.
\end{proof}

\begin{theorem}(localized Lefschetz-Verdier trace formula)\label{tth}
Let the notation and the assumption be as above. 
Then we have an equality
$$\bar{f}_\ast C_{S}^{\rm log,0}({j}_! \cF)=C_{T}^{0}({j_W}_! {Rf_U}_! \cF)$$
in $H_{T}^0(Z,\cK_Z).$
\end{theorem}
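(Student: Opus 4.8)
The plan is to establish first the refined identity
\begin{equation}\label{eqn:refpush}
\bar{f}_\ast C_{S,!}^{\rm log,0}({j}_!\cF)=C_{T,!}^{0}({j_W}_!{Rf_U}_!\cF)
\end{equation}
in $H_{T}^0(Z,{j_W}_!\cK_W \otimes \delta_{Z}^\ast {Rg_{Z}}_\ast \La)$, where $\bar{f}_\ast$ is the push-forward map (\ref{ppush}), and then to deduce the stated equality by applying the canonical maps ${j_V}_!\cK_V \longrightarrow \cK_X$ and ${j_W}_!\cK_W \longrightarrow \cK_Z$ together with the isomorphisms $H_S^0(X,\cK_X)\simeq H_S^0(X,\cK_X\otimes{\delta'}^\ast Rg'_\ast\La)$ and $H_T^0(Z,\cK_Z)\simeq H_T^0(Z,\cK_Z\otimes\delta_Z^\ast {Rg_Z}_\ast\La)$ of Lemma \ref{ll} (and Remark \ref{aho2}), which are compatible with proper push-forward.

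The main sheaf-level input for (\ref{eqn:refpush}) is an isomorphism on $Z \times Z$. Writing $\bar{h}:(X \times X)' \longrightarrow Z \times Z$ for the projection and $\bar{\cH}_Z:=R\mathcal{H}om({\rm pr}_2^\ast {j_W}_!{Rf_U}_!\cF,R{\rm pr}_1^!{j_W}_!{Rf_U}_!\cF)$, I would combine the isomorphism $R\phi_\ast \bar{\cH'}\simeq\bar{\cH}$ of Lemma \ref{5} with Grothendieck's formula for $R(\bar{f}\times\bar{f})_\ast$ in \cite[(3.3.1)]{Gr} and the proper base change identity $R\bar{f}_\ast {j}_!\cF\simeq {j_W}_!{Rf_U}_!\cF$ (coming from the cartesian square $V=X\times_Z W$ and the relation $f\circ j'=f_U$) to obtain $R\bar{h}_\ast\bar{\cH'}\simeq\bar{\cH}_Z$. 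Under this isomorphism the push-forward $\bar{h}_\ast(f^\ast{\rm id}_{{j}_!\cF})$ of the pulled-back identity correspondence is identified with the identity correspondence ${\rm id}_{{j_W}_!{Rf_U}_!\cF}$ on the diagonal: pushing forward along $\phi$ recovers ${\rm id}_{{j}_!\cF}$ by Lemma \ref{5} and the construction of the pull-back (\ref{zz}), and pushing forward along $\bar{f}\times\bar{f}$ sends ${\rm id}_{{j}_!\cF}$ to ${\rm id}_{R\bar{f}_\ast {j}_!\cF}$ by the push-forward of cohomological correspondences recalled in subsection 2.1 (the Lefschetz--Verdier principle that the direct image of the identity correspondence is the identity correspondence of the direct image).

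It remains to match the supports and the twists by $Rg_\ast\La$. Since $X' \subset X''$ and $X''$ is the pull-back of $\delta_Z(Z)$ to $(X \times X)'$, the canonical map $Rg'_\ast\La\longrightarrow Rg''_\ast\La$ carries the lift $(f^\ast{\rm id}_{{j}_!\cF})'\in H_{X'\backslash V'}^0((X \times X)',\bar{\cH'}\otimes Rg'_\ast\La)$ furnished by Lemmas \ref{7} and \ref{existence} into a class supported on $X''\backslash V''$, and Lemma \ref{oopai} guarantees that this twisted class is again determined by its restriction away from $V''$. Using the cartesian diagram (\ref{chu}) and the smoothness of $f$ (whence $V''$ is smooth of codimension $e$ in $(V \times V)'$, the sheaves $R^q{f_U}_!\cF$ are smooth on $W$, and $h^\ast {Rg_W}_\ast\La\simeq{Rg''_V}_\ast\La$ by smooth base change, exactly as in the proof of Lemma \ref{oopai}), one checks that this push-forward carries the lift defining $C_{S,!}^{\rm log,0}$ to the lift defining $C_{T,!}^{0}$. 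Finally, the compatibility of the evaluation map $e'$ of (\ref{6.2}) with the evaluation map $e$ on $Z \times Z$ under $\bar{h}_\ast$, together with the trace map ${Rf}_\ast\cK_V\longrightarrow\cK_W$ built into (\ref{7.0}), yields $\bar{f}_\ast(e'\cdot{\delta'}^\ast(-))=e\cdot\delta_Z^\ast(\bar{h}_\ast(-))$. Assembling these compatibilities into one commutative diagram, in the spirit of Proposition \ref{29}, gives (\ref{eqn:refpush}).

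The step I expect to be the main obstacle is this last compatibility of the evaluation map with proper push-forward in the localized setting: one must verify that evaluation commutes with $\bar{h}_\ast$ after twisting by $Rg_\ast\La$ and restricting to the diagonal, which is the genuine Lefschetz--Verdier content and requires that the Grothendieck--Kunneth isomorphisms of subsection 2.1 be compatible with restriction to the diagonal and with the trace. Because $\bar{f}$ is only assumed proper, smoothness can be exploited solely on the open part $f:V\longrightarrow W$; reconciling the supports $X'\backslash V'$ and $X''\backslash V''$ and the $Rg_\ast\La$-twists across the diagram (\ref{chu}) and the diagram defining $X''$ is the principal technical difficulty.
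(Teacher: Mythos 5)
Your proposal is correct and follows essentially the same route as the paper: pass from the $Rg'_\ast\La$-twist to the $Rg''_\ast\La$-twist supported on $X''\backslash V''$ (Lemma \ref{oopai}), use $R\bar{h}_\ast\bar{\cH'}\simeq\bar{\cH}_Z$ from Lemma \ref{5} and the K\"unneth formula, identify $\bar{h}_\ast\phi^\ast{\rm id}_{j_!\cF}$ with ${\rm id}_{{j_W}_!{Rf_U}_!\cF}$ via $\phi_\ast\phi^\ast={\rm id}$ and the push-forward of correspondences, and invoke the compatibility of evaluation maps with proper push-forward from \cite[Th\'eor\`eme 4.4 (4.4.4)]{Gr} — exactly the content of the paper's commutative diagrams (\ref{dd}), (\ref{cc}) and (\ref{hentai}). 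You also correctly isolate the evaluation/push-forward compatibility as the genuine Lefschetz--Verdier input, which is where the paper defers to Grothendieck--Illusie.
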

\begin{proof}
We prove the assertion by a similar method to the one in $\cite[\rm Th\acute{e}or\grave{e}me\ 4.4]{Gr}.$
We put $\bar{\cH}_{Z}:=R{\cH}om({\rm pr}_2^\ast{j_W}_! R{f_U}_! \cF,R{\rm pr}_1^! {j_W}_! R{f_U}_! \cF)$ on $Z \times Z$. By Lemma \ref{5} and the Kunneth formula,
we have an isomorphism $R{\bar{h}}_\ast \bar{\cH'} \simeq \bar{\cH}_{Z}.$
We consider the following commutative diagram
\begin{align}\label{dd}
\xymatrix{
{\phi}^\ast {\rm id}_{j_!\cF} \in H_{X'}^0((X \times X)',\bar{\cH'}) \ar[r]\ar[d]^{{\rm can.}} &
H_{X'}^0((X \times X)',\bar{\cH'} \otimes Rg'_\ast\La) \ar[d]^{{\rm can.}}
\\
H_{X''}^0((X \times X)',\bar{\cH'}) \ar[r] &
H_{X''}^0((X \times X)',\bar{\cH'} \otimes Rg''_\ast \La)\\}
\end{align}
\[\xymatrix{
 & H_{X' \backslash V'}^0((X \times X)',\bar{\cH'} \otimes Rg'_\ast\La)
\ar[l]_{\!\!\!\!\!\!\!\!\!\!\!\!\!\!\!\!\!\!\!\!\!\!\!\!\!\!\!\!\!\!\!\!\!\!\!\!\!\!\!\!\!\!\!\!\!\!\!\!\!\!\!\!\!\!\!\!\rm Lemma \ref{7}}^{\!\!\!\!\!\!\!\!\!\!\!\!\!\!\!\!\!\!\!\!\!\!\!\!\!\!\!\!\!\!\!\!\!\!\!\!\!\!\!\!\!\!\!\!\!\!\!\!\!\!\!\!\!\rm inj.}\ar[d]^{{\rm can.}}\ar[r]^{\!\!\!\!\!\!\!\!\!\!\!\!\!\!\!\!\!e' \cdot {\delta'}^\ast } & H_{S}^0(X,{j_V}_!\cK_V \otimes {\delta'}^\ast Rg'_\ast\La) \ni C_{S}^{\rm log,0}(j_!\cF)
\ar[d]^{{\rm can.}} &&&&&&&&&  \\
 & H_{X'' \backslash V''}^0((X \times X)',\bar{\cH'} \otimes Rg''_\ast \La) \ar[l]^{\!\!\!\!\!\!\!\!\!\!\!\!\!\!\!\!\!\!\!\!\!\!\!\!\!\!\!\!\!\!\!\!\!\!\!\!\!\!\!\!\!\!\!\!\!\!\!\!\!\!\!\!\!\simeq}_{\!\!\!\!\!\!\!\!\!\!\!\!\!\!\!\!\!\!\!\!\!\!\!\!\!\!\!\!\!\!\!\!\!\!\!\!\!\!\!\!\!\!\!\!\!\!\!\!\!\!\!\!\!\!\!\rm Lemma \ref{oopai}}\ar[r]^{\!\!\!\!\!\!\!\!\!\!\!\!\!\!\!\!\!\!\!\!\!e' \cdot {\delta'}^\ast} & H_{S}^0(X,{j_V}_!\cK_V \otimes {\delta'}^\ast Rg''_\ast \La) \ni e'{\delta'}^\ast \phi ^\ast ({\rm id}_{j_!\cF})''.&&&&&&&&
}\]
We denote by $e'{\delta'}^\ast \phi ^\ast ({\rm id}_{j_!\cF})''$ the image of the element
$\phi ^\ast {\rm id}_{j_!\cF}$ in $H_{X''}^0((X \times X)',\bar{\cH'})$
by the composite of the maps in the lower line in the above diagram.
By the above commutative diagram, Lemma \ref{existence} and Definition \ref{bakayarou},
 we obtain an equality 
 \begin{equation}\label{chinko}
 C_{S}^{\rm log,0}(j_!\cF)=e'{\delta'}^\ast \phi ^\ast ({\rm id}_{j_!\cF})''
\end{equation} 
in
$H_{S}^0(X,{j_V}_!\cK_V \otimes {\delta'}^\ast Rg''_\ast \La)$
where we denote 
by the same letter
$C_{S}^{\rm log,0}(j_!\cF)$ the image
of $C_{S}^{\rm log,0}(j_!\cF) \in H_{S}^0(X,{j_V}_!\cK_V \otimes {\delta'}^\ast Rg'_\ast\La)$
by the canonical map
$H_{S}^0(X,{j_V}_!\cK_V \otimes {\delta'}^\ast Rg'_\ast\La)
\longrightarrow
H_{S}^0(X,{j_V}_!\cK_V \otimes {\delta'}^\ast Rg''_\ast \La).$

We consider the following commutative diagram
\begin{align}\label{cc}
{\small
\xymatrix{
H_{X''}^0((X \times X)',\bar{\cH'}) \ar[r]^{\!\!\!\!\!\!\!\!\!\!\!\!\!\!\!\!\!\!\!\!\!\!\!\!(0)}\ar[d]^{{\bar{h}}_\ast}_{\simeq}  &
H_{X'' \backslash V''}^0((X \times X)',\bar{\cH'} \otimes Rg''_\ast \La) \ar[d]^{{\bar{h}}_\ast}_{\simeq}
\ar[r]^{e' \cdot {\delta'}^\ast\!\!\!\!\!\!\!\!} &
H_{S}^0(X,{j_V}_!\cK_V \otimes {\delta'}^\ast Rg''_\ast \La)\ar[d]^{{\bar{f}}_\ast}_{\simeq} \\
H_Z^0(Z\times Z,\bar{\cH}_Z)\ar[r]^{\!\!\!\!\!\!\!\!\!\!\!\!\!\!\!\!\!\!\!\!\!\!\!\!\!\!\!\!\!\!(1)}\ar[dr]_{\!\!\!\!\!\!\!\!\!\!\!\!\!(2)} &
H_{Z \backslash W}^0(Z \times Z,R{\bar{h}}_\ast (\bar{\cH'} \otimes Rg''_\ast \La)) \ar[r]^{(1)'}&
H_T^0(Z,R{\bar{f}}_\ast ({j_V}_!\cK_V \otimes {\delta'}^\ast Rg''_\ast \La)) \\
 &
H_{Z \backslash W}^0(Z \times Z,\bar{\cH}_Z \otimes R{g_Z}_\ast \La) \ar[u]_{(2)''}\ar[r]^{\!\!\!\!\!\!(2)'} \ar[dr]_{(3)'} &
H_T^0(Z,R{\bar{f}}_\ast {j_V}_!\cK_V \otimes \delta_{Z}^\ast R{g_Z}_\ast \La)\ar[u]_{\simeq(\ref{7.1})} \ar[d]^{\rm can.} \\
 &  &
H_T^0(Z,{j_W}_!\cK_W \otimes \delta_{Z}^\ast R{g_Z}_\ast \La).
\\}}
\end{align}
We explain the maps and the commutativities in the above diagram.
The commutativities except for the bottom
one follow from definitions of the maps immediately.
\\(0): This map is the composite of the first two maps in the lower line in the diagram (\ref{dd}). 
\\(1):
The adjoint map ${{\bar{h}}}^\ast \bar{\cH}_Z \longrightarrow \bar{\cH'}$
of the isomorphism $\bar{\cH}_Z \longrightarrow {R{\bar{h}}}_\ast \bar{\cH'}$
and the canonical map $\La \longrightarrow Rg''_\ast \La$
induce a map ${{\bar{h}}}^\ast \bar{\cH}_Z \longrightarrow \bar{\cH'}
\otimes Rg''_\ast \La$.
The adjoint $\bar{\cH}_Z \longrightarrow R{\bar{h}}_\ast ({\bar{\cH'}}
\otimes Rg''_\ast \La)$ of this map
induces a map $H_Z^0(Z \times Z,\bar{\cH}_Z) \longrightarrow
 H_{Z}^0(Z \times Z,R{\bar{h}}_\ast ({\bar{\cH'}} \otimes Rg''_\ast \La)).$
By Lemma \ref{oopai}, the canonical map
$H_{Z \backslash W}^0(Z \times Z,R{\bar{h}}_\ast ({\bar{\cH'}}
\otimes Rg''_\ast \La)) \longrightarrow
H_{Z}^0(Z \times Z,R{\bar{h}}_\ast ({\bar{\cH'}} \otimes Rg''_\ast \La)) 
$ is an isomorphism. The map $(1)$ is the composition of these maps.
\\$(1)'$:
The base change map 
$\delta_{Z}^\ast R{\bar{h}}_\ast \longrightarrow 
R{\bar{f}}_\ast {\delta'}^\ast$ induces a map
$\delta_{Z}^\ast R{\bar{h}}_\ast (\bar{\cH'} \otimes Rg''_\ast \La)
\longrightarrow
R{\bar{f}}_\ast ({\delta'}^\ast \bar{\cH'} \otimes {\delta'}^\ast Rg''_\ast \La).$ 
The evaluation map (\ref{6.2}) induces a map
$R{\bar{f}}_\ast ({\delta'}^\ast \bar{\cH'} \otimes {\delta'}^\ast Rg''_\ast \La)
\longrightarrow
R{\bar{f}}_\ast ({j_V}_!\cK_V \otimes {\delta'}^\ast Rg''_\ast \La)$. 
We define $(1)'$ to be the composite of these two maps.
By these definitions, the commutativities in the first line in the diagram are clear.

(2):
This map is the map ${\rm loc}_{Z,W,{R{f_U}}_!\cF}:H_Z^0(Z \times Z,{\bar{\cH}}_Z) \longrightarrow
H_{Z}^0(Z \times Z,{\bar{\cH}}_Z \otimes {Rg_Z}_\ast \La)
\simeq 
H_{Z \backslash W}^0(Z \times Z,{\bar{\cH}}_Z \otimes {Rg_Z}_\ast \La).$
(c.f.\ (\ref{loc map}).)
\\$(2)'$: 
The isomorphism ${R{\bar{h}}}_\ast \bar{\mathcal{H'}} \simeq {\bar{\cH}}_Z$
and
the base change map 
$\delta_{Z}^\ast R{\bar{h}}_\ast \longrightarrow R{\bar{f}}_\ast {\delta'}^\ast$ 
induce a map
$\delta_{Z}^\ast \bar{\cH}_Z \otimes \delta_{Z}^\ast R{g_Z}_\ast \La
\longrightarrow
 R{\bar{f}}_\ast {\delta'}^\ast \bar{\cH'} \otimes \delta_{Z}^\ast R{g_Z}_\ast \La.$
The evaluation map (\ref{6.2}) induces a map
$R{\bar{f}}_\ast {\delta'}^\ast \bar{\cH'} \otimes \delta_{Z}^\ast R{g_Z}_\ast \La
\longrightarrow
R{\bar{f}}_\ast {j_V}_!\cK_V \otimes R{g_Z}_\ast \La.$
We define $(2)'$ to be the composite of these maps.
\\$(2)''$: 
The map ${{\bar{h}}}^\ast {\bar{\cH}}_{Z} \longrightarrow {\bar{\cH'}}$ and the base change map 
${{\bar{h}}}^\ast {R{g_Z}}_\ast \La \longrightarrow
{Rg''}_\ast \La$ induce a map
${{\bar{h}}}^\ast ({\bar{\cH}}_{Z} \otimes {R{g_Z}}_\ast \La)
\longrightarrow
{\bar{\cH'}} \otimes {Rg''}_\ast \La$. The map $(2)''$ is induced by the adjoint of this map.
By these descriptions and the definition of the map (\ref{7.1}),
the commutativities in the second line in the diagram (\ref{cc})
follow.

$(3)'$: This map is induced by the pull-back by $\delta_Z$
and the usual evaluation map
$\delta_{Z}^\ast {\bar{\cH}}_{Z} \longrightarrow {j_W}_!\cK_W$ for ${Rf_U}_!\cF.$
The right bottom commutativity is a consequence of the compatibility
of evaluation maps with proper push-forward which is proved in $\cite[\rm Th\acute{e}or\grave{e}me\ 4.4.\ (4.4.4)]{Gr}.$

We consider the following commutative diagram
\[\xymatrix{
\phi^\ast {\rm id}_{j_!\cF} \in H_{X''}^0((X \times X)',\bar{\cH'}) \ar[r]^{\phi_\ast\!\!\!\!\!}_{\simeq\!\!\!\!}\ar[d]^{\simeq}_{\bar{h}_\ast} & 
H_{X \times _Z X}^0(X \times X,\bar{\cH}) \ni {\rm id}_{j_!\cF} \ar[dl]_{\simeq}^{(\bar{f} \times \bar{f})_\ast} \\
{\rm id}_{{j_W}_!{Rf_U}_!\cF} \in H_{Z}^0(Z \times Z,\bar{\cH}_Z). & \\
}
\]
By this diagram, an equality
$(\bar{f} \times \bar{f})_\ast {\rm id}_{j_!\cF}={\rm id}_{{j_W}_!{Rf_U}_!\cF}$
which is a consequence of the compatibility of the cohomological correspondence with proper
push-forward
and ${\phi}_\ast {\phi}^\ast ={\rm id},$
we obtain an equality 
\begin{equation}\label{kuso}
\bar{h}_\ast {\phi}^\ast {\rm id}_{j_!\cF}={\rm id}_{{j_W}_!{Rf_U}_!\cF}.
\end{equation}
We consider the following commutative diagram
\begin{align}\label{hentai}
\xymatrix{
H_{S}^0(X,{j_V}_! \cK_V \otimes {\delta'}^\ast Rg'_\ast\La)
\ar[r]^{\!\!\!\!\!\!\!\!\!\!\!\!\!\!\!\!\!\!\!\rm can.}\ar[d]^{{\bar{f}}_\ast} &
H_{S}^0(X,\cK_X \otimes {\delta'}^\ast Rg'_\ast\La)
\simeq H_S^0(X,\cK_X)
\ar[d]^{{\bar{f}}_\ast} \\
H_{T}^0(Z,{j_W}_! \cK_W \otimes  \delta_{Z}^\ast {Rg_{Z}}_\ast \La)
\ar[r]^{\!\!\!\!\!\!\!\!\!\!\!\!\!\!\!\rm can.} &
H_{T}^0(Z,\cK_Z \otimes \delta_{Z}^\ast {Rg_{Z}}_\ast \La)
\simeq H_T^0(Z,\cK_Z)\\
}
\end{align}
where the left vertical arrow is the proper push-forward (\ref{ppush})
and the right vertical arrow
${\bar{f}}_\ast:H_S^0(X,\cK_X) \longrightarrow H_T^0(Z,\cK_Z)$
is the usual proper push-forward. 
Clearly the composite $H_{S}^0(X,{j_V}_! \cK_V \otimes {\delta'}^\ast Rg'_\ast\La)
\longrightarrow
H_{T}^0(Z,{j_W}_! \cK_W \otimes \delta_{Z}^\ast {Rg_{Z}}_\ast \La)$
of the right vertical arrows in the diagram (\ref{cc})
is equal to the map ${\bar{f}}_\ast:H_{S}^0(X,{j_V}_! \cK_V \otimes {\delta'}^\ast Rg'_\ast\La)
\longrightarrow
H_{T}^0(Z,{j_W}_! \cK_W \otimes \delta_{Z}^\ast {Rg_{Z}}_\ast \La)$
in the diagram (\ref{hentai}).
The localized characteristic class
$C_{T}^{0}({j_W}_! {Rf_U}_! \cF)$
is the image of the cohomological correspondence
${\rm id}_{{j_W}_!{Rf_U}_!\cF}$
by the composite of the maps $(2)$ and $(3)'$ in the diagram (\ref{cc}) by Definition \ref{refined}.
Hence the assertion follows from the equalities (\ref{chinko}) and (\ref{kuso}),
and the commutative diagrams (\ref{cc}) and (\ref{hentai}).
\end{proof}
\begin{corollary}\label{Coo}
Let the notation and the assumption be as in Theorem \ref{tth}. 
Then we have an equality
$$\bar{f}_\ast C_{S}^{\rm log,00}({j}_! \cF)=C_{T}^{00}({j_W}_! {Rf_U}_! \cF)
-{\rm rk}(\cF) \cdot C_{T}^{00}({j_W}_!{Rf_U}_!\La_{U})$$
in $H_{T}^0(Z,\cK_Z).$
\end{corollary}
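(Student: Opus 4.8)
The plan is to deduce the corollary formally from Theorem \ref{tth} together with the defining property of the doubly-reduced classes $C^{00}$, the only substantial input being a multiplicativity of ranks forced by tameness. First I would expand the left-hand side by Definition \ref{bakayarou}:
$$C_{S}^{\rm log,00}({j}_!\cF)=C_{S}^{\rm log,0}({j}_!\cF)-{\rm rk}(\cF)\cdot C_{S}^{\rm log,0}({j}_!\La_U).$$
Since the constant sheaf $\La_U$ is smooth and (trivially) tamely ramified along $V\cap D$, Theorem \ref{tth} applies both to $\cF$ and to $\La_U$; applying the additive map $\bar{f}_\ast$ then gives
$$\bar{f}_\ast C_{S}^{\rm log,00}({j}_!\cF)=C_{T}^{0}({j_W}_!{Rf_U}_!\cF)-{\rm rk}(\cF)\cdot C_{T}^{0}({j_W}_!{Rf_U}_!\La_U).$$

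Next I would unfold the right-hand side of the corollary. The cohomology sheaves of the complexes ${Rf_U}_!\cF$ and ${Rf_U}_!\La_U$ on $W$ are smooth (this is used already in Lemma \ref{oopai}), so the localized classes $C_T^{0}$ of Theorem \ref{tth} are defined for them, and the difference construction of Definition \ref{aho1} and Remark \ref{aho2} gives
$$C_{T}^{00}({j_W}_!{Rf_U}_!\cF)=C_{T}^{0}({j_W}_!{Rf_U}_!\cF)-\chi_\cF\cdot C_{T}^{0}({j_W}_!\La_W),$$
$$C_{T}^{00}({j_W}_!{Rf_U}_!\La_U)=C_{T}^{0}({j_W}_!{Rf_U}_!\La_U)-\chi\cdot C_{T}^{0}({j_W}_!\La_W),$$
where $\chi_\cF={\rm Tr}({\rm id}_{{Rf_U}_!\cF})$ and $\chi={\rm Tr}({\rm id}_{{Rf_U}_!\La_U})$ denote the generic ranks. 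Forming $C_{T}^{00}({j_W}_!{Rf_U}_!\cF)-{\rm rk}(\cF)\cdot C_{T}^{00}({j_W}_!{Rf_U}_!\La_U)$ and subtracting the expression obtained above, everything cancels except the residual term $-(\chi_\cF-{\rm rk}(\cF)\cdot\chi)\cdot C_{T}^{0}({j_W}_!\La_W)$.

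The crux is therefore to prove $\chi_\cF={\rm rk}(\cF)\cdot\chi$, so that this residual term disappears; this is the step where tameness is indispensable, since $C_{T}^{0}({j_W}_!\La_W)$ does not vanish in general (cf.\ Corollary \ref{45}). By proper base change the generic rank $\chi_\cF$ equals the Euler--Poincar\'e characteristic $\chi_c(U_w,\cF|_{U_w})$ of a fiber $U_w=f_U^{-1}(w)$; as $f$ is proper and smooth, $U_w=V_w\setminus(V_w\cap D)$ is an open subscheme of the proper smooth fiber $V_w$. Because $\cF$ is tamely ramified along $V\cap D$, its pullback $\cF|_{U_w}$ is a tamely ramified smooth sheaf, and for such a sheaf the Euler--Poincar\'e characteristic is multiplicative in the rank, $\chi_c(U_w,\cF|_{U_w})={\rm rk}(\cF)\cdot\chi_c(U_w,\La)$, by the tame case of the Grothendieck--Ogg--Shafarevich formula and its higher-dimensional counterpart. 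Hence $\chi_\cF={\rm rk}(\cF)\cdot\chi$, the residual term vanishes, and the two expressions coincide, which is the assertion. I expect this rank identity to be the only non-formal point; the rest is bookkeeping with the definitions and the additivity of $\bar{f}_\ast$.
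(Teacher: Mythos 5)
Your proof is correct and follows essentially the same route as the paper: expand $C_{S}^{\rm log,00}$ via Definition \ref{bakayarou}, apply Theorem \ref{tth} to both $\cF$ and $\La_U$, and reduce the remaining discrepancy to the rank identity ${\rm rk}({Rf_U}_!\cF)={\rm rk}(\cF)\cdot{\rm rk}({Rf_U}_!\La_U)$. The only difference is that the paper simply asserts this identity, whereas you supply the (valid) justification via proper base change and the tame case of the Grothendieck--Ogg--Shafarevich formula on the fibers.
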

\begin{proof}
By Theorem \ref{tth},
we have equalities 
$\bar{f}_\ast C_{S}^{\rm log,0}({j}_! \cF)=C_{T}^{0}({j_W}_! {Rf_U}_! \cF)$ and 
$\bar{f}_\ast C_{S}^{\rm log,0}({j}_! \La_U)=C_{T}^{0}({j_W}_! {Rf_U}_! \La_U).$
Hence the assertion follows from an equality ${\rm rk}({Rf_U}_! \cF)={\rm rk}(\cF) \cdot {\rm rk}({Rf_U}_! \La_U).$
\end{proof}
\begin{corollary}\label{rank1}
Let the notation be as in Theorem \ref{tth}.
Further we assume that $k$ is a perfect field, that $D \cup S$
is a divisor with simple normal crossings, that dim $Z \leq 2$
and that $\cF$ is a smooth $E$-sheaf
of rank 1 which is clean with respect to the boundary
where $E$ is a finite extension of $\mathbb{Q}_l$
and $l$ is invertible in $k.$
Then we have
$$-{\bar{f}}_\ast c_{\cF}={\rm Sw}^{\rm naive} (R{f_U}_! \cF)-{\rm rk}(\cF) \cdot {\rm Sw}^{\rm naive} (R{f_U}_! \La_U)$$
in $H_T^0(Z,\cK_Z)$
where $c_{\cF} \in CH_0(S)$ is the Kato 0-cycle class defined by K.\ Kato
in \cite{K1} and \cite{K2}, and recalled in \cite[Section 4]{AS}.
\end{corollary}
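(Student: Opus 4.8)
The plan is to read the desired equality off three earlier results: the localized Lefschetz--Verdier trace formula (Corollary \ref{Coo}), the localized Abbes--Saito formula (Theorem \ref{43}) applied to the pushed-forward complex $R{f_U}_!\cF$ on $W$, and the rank one refinement of the logarithmic localized characteristic class proved in \cite{T}. The left-hand side $-\bar{f}_\ast c_\cF$ will come from the rank one computation, and the right-hand side from the localized Abbes--Saito formula on $Z$.

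First I would invoke Corollary \ref{Coo}, giving
$$\bar{f}_\ast C_{S}^{{\rm log},00}({j}_!\cF)=C_{T}^{00}({j_W}_! R{f_U}_!\cF)-{\rm rk}(\cF)\cdot C_{T}^{00}({j_W}_! R{f_U}_!\La_U)$$
in $H_T^0(Z,\cK_Z)$. Since $f:V \longrightarrow W$ is proper and smooth, each cohomology sheaf $R^q{f_U}_!\cF$ is a smooth sheaf on $W$ (exactly as used in the proof of Lemma \ref{oopai}), so $R{f_U}_!\cF$ is a bounded complex with smooth cohomology. As $\dim Z \leq 2$, resolution of singularities is available in the relevant dimension, so the hypothesis of Theorem \ref{43} is satisfied unconditionally on $Z$. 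Applying Theorem \ref{43} to each $R^q{f_U}_!\cF$ and using the additivity of both $C_T^{00}$ and ${\rm Sw}^{\rm naive}$ in distinguished triangles, I obtain $C_{T}^{00}({j_W}_! R{f_U}_!\cF)=-{\rm cl}({\rm Sw}^{\rm naive}(R{f_U}_!\cF))$ and likewise for $\La_U$. Substituting these into the displayed identity yields
$$\bar{f}_\ast C_{S}^{{\rm log},00}({j}_!\cF)=-{\rm cl}\bigl({\rm Sw}^{\rm naive}(R{f_U}_!\cF)-{\rm rk}(\cF)\cdot{\rm Sw}^{\rm naive}(R{f_U}_!\La_U)\bigr).$$

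To identify the left-hand side, I would use the rank one refinement of \cite{T}: for a smooth $E$-sheaf $\cF$ of rank one which is clean along the boundary, the refined logarithmic localized characteristic class is computed by the Kato $0$-cycle class, so that $C_{S}^{{\rm log},00}({j}_!\cF)={\rm cl}(c_\cF)$ in $H_S^0(X,\cK_X)$. Combining this with the compatibility of the cycle class map with proper pushforward, $\bar{f}_\ast {\rm cl}(c_\cF)={\rm cl}(\bar{f}_\ast c_\cF)$, and comparing with the previous display gives the asserted equality
$$-\bar{f}_\ast c_\cF={\rm Sw}^{\rm naive}(R{f_U}_!\cF)-{\rm rk}(\cF)\cdot {\rm Sw}^{\rm naive}(R{f_U}_!\La_U)$$
in $H_T^0(Z,\cK_Z)$.

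The genuinely substantial inputs are external: the rank one clean computation $C_{S}^{{\rm log},00}({j}_!\cF)={\rm cl}(c_\cF)$ from \cite{T}, which rests on T. Saito's cleanliness techniques in \cite{S}, and the unconditional availability of the localized Abbes--Saito formula in dimension at most two. Inside this proof the only real care is bookkeeping: one must check that $R{f_U}_!\cF$ has smooth cohomology sheaves so that Theorem \ref{43} applies term by term, and track the signs through the additivity and the two cycle class identifications so that the final sign $-\bar{f}_\ast c_\cF$ (rather than $+\bar{f}_\ast c_\cF$) emerges correctly.
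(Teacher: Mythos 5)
Your proof follows exactly the paper's route: it combines Corollary \ref{Coo}, Theorem \ref{43} applied over $Z$ (where $\dim Z\leq 2$ makes the needed resolution of singularities available, and where the reduction from the complex $R{f_U}_!\cF$ to its smooth cohomology sheaves is the same implicit step the paper takes), and the rank-one identification of $C_S^{\rm log,00}(j_!\cF)$ with the Kato $0$-cycle class from \cite[Corollary 3.10]{T}. The only divergence is a sign in that last input: the paper quotes $-c_{\cF}=C_S^{\rm log,00}(j_!\cF)$, whereas you use $C_S^{\rm log,00}(j_!\cF)={\rm cl}(c_{\cF})$, and it is in fact your sign that makes the three ingredients combine to give the displayed equality $-\bar{f}_\ast c_{\cF}={\rm Sw}^{\rm naive}(R{f_U}_!\cF)-{\rm rk}(\cF)\cdot{\rm Sw}^{\rm naive}(R{f_U}_!\La_U)$, so the discrepancy appears to lie in the paper's quotation of \cite{T} rather than in your argument.
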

\begin{proof}
We prove an equality $-c_{\cF}=C_S^{\rm log,00}(j_!\cF)$ in
$H_S^0(X,\cK_X)$ in \cite[Corollary 3.10]{T}.
By this equality, Theorem \ref{43} and Corollary \ref{Coo}, the assertion follows.
\end{proof}

\begin{remark}Let the notation be as in Corollary \ref{rank1}.
If we assume the strong resolution of singularities, the equality
$$-{\bar{f}}_\ast c_{\cF}={\rm Sw}^{\rm naive} (R{f_U}_! \cF)-{\rm rk}(\cF) \cdot {\rm Sw}^{\rm naive} (R{f_U}_! \La_U)$$
in $H_T^0(Z,\cK_Z)$ holds for {\it any} dimensional scheme $Z.$
\end{remark}

We prove the compatibility of the localized characteristic class of a smooth $\La
$-adic sheaf with a cohomological correspondence
with proper push-forward.
Let $X$ and $Y$ be schemes over $k$ and $U \subseteq X$ and $V \subseteq
Y$ open dense subschemes smooth over $k$ respectively. Let $j_V:V
\longrightarrow Y$ and $j:U \longrightarrow X$ denote the open
immersions, and $T:=Y \backslash V$ and $S:=X \backslash U$ the complements respectively. 
Let $\delta_{X}:X \longrightarrow X \times X,$
$\delta_{Y}:Y \longrightarrow Y \times Y$, $\delta_V:V
\longrightarrow V \times V$ and $\delta_U:U \longrightarrow U \times
U$ be the diagonal closed immersions.  We consider a cartesian diagram
\[\xymatrix{
V \ar[r]^{j_V} \ar[d]^{f} & Y \ar[d]^{\bar{f}} & T \ar[l] \ar[d] \\
U \ar[r]^{j} & X & S \ar[l] \\
}
\]where
 $\bar{f}:Y \longrightarrow
X$ is a proper morphism and $f:V \longrightarrow U$ is a proper
smooth morphism.

Let $C$ and $C'$ be closed subschemes
of $U \times U$ and $V \times V$ respectively.
Let $\bar{C}$ be the closure of $C$ in $X \times X$ and
$\bar{C'}$ the closure of $C'$ in $Y \times Y$ respectively.
We assume that $(f \times f)(C') \subset C$ and
$C=(X \times U) \cap \bar{C}$ and $C'=(Y \times V) \cap \bar{C'}$.
Let $C''$ denote the inverse $(f \times f)^{-1}(C)$ and $\bar{C}''$
 the closure of $C''$ in $Y \times Y$.
 Let $j_C:C \longrightarrow \bar{C}$ and $j_{C'}:C' \longrightarrow \bar{C'}$ be the open immersions.
We consider the following cartesian diagram
\[\xymatrix{
\bar{C'} \ar[r]\ar[dr] & \bar{C}'' \ar[r]^{\!\!\!\!\!\!\!\!\!\bar{c}''}\ar[d]&
Y \times Y \ar[d]^{\bar{f} \times \bar{f}} &
Y \times Y \backslash \bar{C}'' \ar[l]_{g_{\bar{C}''}}\ar[d] \\
 & \bar{C} \ar[r]^{\!\!\!\!\!\!\!\!\!\!\!\!\bar{c}}&
X \times X &
X \times X \backslash \bar{C} \ar[l]_{g_{\bar{C}}}
}
\]where $g_{\bar{C}}:X \times X \backslash \bar{C} \longrightarrow X \times X$ and 
$g_{\bar{C}''}:Y \times Y \backslash \bar{C}'' \longrightarrow Y \times Y$ are the open immersions
and the squares are cartesian.
Similarly
$g_{C}:U \times U \backslash C \longrightarrow U \times U,$
$g_{C'}:V \times V \backslash C' \longrightarrow V \times V$
and $g_{\bar{C'}}:Y \times Y \backslash \bar{C'} \longrightarrow Y \times Y$ 
denote the open immersions.
Let $h:C' \longrightarrow C$ be the projection.

Let $\mathcal{F}$ be a smooth $\Lambda$-sheaf on $V$. Since $f$ is a proper smooth morphism, the sheaves $R^q f_\ast \cF$ are smooth
for all $q$.
Let $u'$ be a cohomological correspondence of $\cF$ on $C'$.
We put $\bar{\cH}:=R\mathcal{H}om({\rm pr}_2^\ast{j}_!Rf_\ast \cF,R{\rm pr}_1^!{j}_!Rf_\ast \cF)$ on $X \times X$ and
$\bar{\cH'}:=R\mathcal{H}om({\rm pr}_2^\ast{j_V}_!\cF,R{\rm pr}_1^!{j_V}_!\cF)$ on $Y \times Y$ respectively.

We define a map in the same way as (\ref{7.0})
\begin{equation}\label{ppush3}
R\bar{f}_\ast ({j_V}_!\cK_V \otimes \delta_{Y}^\ast {Rg_{\bar{C'}}}_\ast \La)
\longrightarrow
{j}_!\cK_U \otimes \delta_{X}^\ast
{Rg_{\bar{C}}}_\ast \La
\end{equation}
to be the composite of the following maps
$$
R{\bar{f}}_\ast ({j_V}_! \cK_V \otimes \delta_{Y}^\ast {Rg_{\bar{C'}}}_\ast \La)\longrightarrow
R{\bar{f}}_\ast ({j_V}_! \cK_V \otimes \delta_{Y}^\ast {Rg_{\bar{C}''}}_\ast \La) \simeq 
R{\bar{f}}_\ast {j_V}_! (\cK_V \otimes \delta_{V}^\ast R{g_{C''}}_\ast \La) $$
$$\simeq R{\bar{f}}_\ast {j_V}_! \cK_V \otimes \delta_{X}^\ast R{g_{\bar{C}}}_\ast \La \longrightarrow
{j}_! \cK_U \otimes \delta_{X}^\ast R{g_{\bar{C}}}_\ast \La.$$
The first map is induced by the canonical map ${Rg_{\bar{C'}}}_\ast \La
\longrightarrow {Rg_{\bar{C}''}}_\ast \La.$
The second isomorphism is induced by the projection formula.
The third isomorphism follows from the smooth base change theorem.
The fourth map is induced by the proper push-forward $R{\bar{f}}_\ast {j_V}_! \cK_V
\longrightarrow {j}_! \cK_U.$
The map (\ref{ppush3}) induces the proper push-forward 
\begin{equation}\label{pus}
\bar{f}_\ast :H_{\bar{C'} \cap T}^0(Y,{j_V}_!\cK_V \otimes \delta_{Y}^\ast {Rg_{\bar{C'}}}_\ast \La) 
\longrightarrow 
H_{\bar{C} \cap S}^0(X,{j}_!\cK_U \otimes \delta_{X}^\ast
{Rg_{\bar{C}}}_\ast \La).
\end{equation}
\begin{proposition}\label{prop:2}
Let the notation and the assumption be as above. Then we have
$$\bar{f}_\ast C_{T,!}^0({j_V}_!\cF,\bar{C'},{j_{C'}}_!u')
=C_{S,!}^0({j}_!Rf_\ast \cF,\bar{C},{j_C}_!h_\ast u')$$
in
$H_{\bar{C} \cap S}^0(X,{j}_!\cK_U \otimes \delta_{X}^\ast
{Rg_{\bar{C}}}_\ast \La).$
\end{proposition}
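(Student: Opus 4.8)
The plan is to imitate the proof of Theorem~\ref{tth} and of \cite[Th\'eor\`eme 4.4]{Gr}, organizing everything into a single commutative diagram whose two horizontal composites compute the two refined localized characteristic classes in the statement and whose vertical arrows are the proper push-forward maps (\ref{ppush3}) and (\ref{pus}). Since both classes are, by Definition~\ref{refined}, the image of a zero-extended cohomological correspondence under the composite of the localization map (\ref{loc map}) and the map $e\cdot\delta^\ast$, it suffices to prove the compatibility of each of these ingredients with push-forward. The basic input is an isomorphism $R(\bar{f}\times\bar{f})_\ast\bar{\cH'}\simeq\bar{\cH}$, which is the analogue here of Lemma~\ref{5}: it comes from $\cite[(3.3.1)]{Gr}$ together with the identity $R\bar{f}_\ast{j_V}_!\cF={j}_!Rf_\ast\cF$ furnished by proper base change (valid because $\bar{f}$ and $f$ are proper), which identifies the target of the Grothendieck isomorphism with $\bar{\cH}$.

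First I would record the compatibility of the cohomological correspondences themselves. Using the above isomorphism, the push-forward of cohomological correspondences recalled in subsection~2.1, applied to the proper map $h:C'\longrightarrow C$, produces $h_\ast u'$ as a cohomological correspondence of $Rf_\ast\cF={Rf}_!\cF$ on $C$. Combining this with the compatibility of zero-extension and proper push-forward, proved just as the analogous pull-back statement in the proof of Proposition~\ref{29} (and as used for the identity correspondence in the proof of Theorem~\ref{tth}), yields the key identity $(\bar{f}\times\bar{f})_\ast({j_{C'}}_!u')={j_C}_!h_\ast u'$. Because $(f\times f)(C')\subset C$, this push-forward must be factored through the support $\bar{C}''=$ closure of $(f\times f)^{-1}(C)$, so the identity is read off from the cartesian square $\bar{C}''\longrightarrow X\times X$ defining $\bar{c}''$.

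Next I would construct the main diagram, threading it through $\bar{C}''$ exactly as the proof of Theorem~\ref{tth} threads through $X''$: the upper rows carry the $Y\times Y$ data with supports $\bar{C'}$ and $\bar{C}''$, and a vertical isomorphism $R(\bar{f}\times\bar{f})_\ast$ identifies the $\bar{C}''$-supported groups on $Y\times Y$ with the $\bar{C}$-supported groups on $X\times X$ by the proper base change theorem applied to the cartesian square defining $\bar{c}''$. That the localization maps commute with these push-forwards rests on two vanishing statements of the type of Lemmas~\ref{lll1} and~\ref{oopai}: since $f$ is proper and smooth, each $R^qf_\ast\cF$ is a smooth sheaf on $U$, so the relevant $R\mathcal{H}om$-sheaf is built from smooth sheaves and the projection-formula argument of Lemma~\ref{lll1} applies verbatim to show that the supports may be shrunk from $\bar{C}''$ to $\bar{C}''\setminus C''$ and from $\bar{C}$ to $\bar{C}\setminus C$. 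This is precisely what lets the canonical maps $\La\longrightarrow Rg_\ast\La$ be transported across $\bar{f}_\ast$, and it forces the definition (\ref{ppush3}) of the push-forward map.

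Finally, the commutativity of the evaluation squares $e\cdot\delta^\ast$ is the compatibility of the evaluation map with proper push-forward, i.e.\ the sheaf-level Lefschetz--Verdier formula $\cite[\mathrm{Th\acute{e}or\grave{e}me}\ 4.4,\ (4.4.4)]{Gr}$, used together with the base-change identification of $\delta_X^\ast R(\bar{f}\times\bar{f})_\ast$ with $R\bar{f}_\ast\delta_Y^\ast$. This is the substance of the argument, and the hardest part will be verifying that all the auxiliary squares commute coherently: the base-change squares relating $\delta^\ast$ and push-forward, the K\"unneth identifications underlying $R(\bar{f}\times\bar{f})_\ast\bar{\cH'}\simeq\bar{\cH}$, and the canonical maps $\La\longrightarrow Rg_\ast\La$ at the three support levels $\bar{C'}$, $\bar{C}''$, $\bar{C}$. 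Granting these commutativities together with the correspondence identity $(\bar{f}\times\bar{f})_\ast({j_{C'}}_!u')={j_C}_!h_\ast u'$, the two horizontal composites of the diagram compute $\bar{f}_\ast C_{T,!}^0({j_V}_!\cF,\bar{C'},{j_{C'}}_!u')$ and $C_{S,!}^0({j}_!Rf_\ast\cF,\bar{C},{j_C}_!h_\ast u')$ respectively, and the asserted equality follows by chasing ${j_{C'}}_!u'$ around the diagram.
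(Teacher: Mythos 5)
Your proposal is correct and follows exactly the route the paper intends: the paper's own ``proof'' of Proposition \ref{prop:2} is a one-line remark that the argument is the same as that of the localized Lefschetz--Verdier trace formula (Theorem \ref{tth}), and your diagram threaded through the intermediate support $\bar{C}''$, with the K\"unneth/proper-base-change isomorphism $R(\bar{f}\times\bar{f})_\ast\bar{\cH'}\simeq\bar{\cH}$, the support-shrinking lemmas in the style of Lemmas \ref{lll1} and \ref{oopai}, the identity $(\bar{f}\times\bar{f})_\ast({j_{C'}}_!u')={j_C}_!h_\ast u'$, and the evaluation-map compatibility from \cite[Th\'eor\`eme 4.4 (4.4.4)]{Gr}, is precisely the intended adaptation. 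The only cosmetic caveat is that $\delta_X^\ast R(\bar{f}\times\bar{f})_\ast\longrightarrow R\bar{f}_\ast\delta_Y^\ast$ is a base-change map rather than an identification, as in the maps $(1)'$ and $(2)'$ of the proof of Theorem \ref{tth}.
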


\begin{proof} We prove this formula in the same way as Theorem \ref{Coo}.
We omit a proof.
\end{proof}

We keep the same notation as above. In the following, we consider the case where
$C=\delta_U(U)$ and $C'=\delta_V(V)$ are the diagonals and $u'={\rm id}_{\cF}$.
We assume that $X, Y$ are smooth over $k.$
We have the proper push-forward 
$\bar{f}_\ast:H_T^0(Y,{j_V}_!\cK_V) \longrightarrow H_{S}^0(X,{j}_!\cK_U).$
\begin{corollary}
\label{cor:push-forward}
Let the notation and the assumption be as above. 
\\1. We have
$$\bar{f}_\ast C_{T,!}^{00}({j_V}_!\cF)=
C_{S,!}^{00}({j}_!Rf_\ast {\cal{F}})-{\rm {\rm rk}}(\cF) \cdot C_{S,!}^{00}({j}_!Rf_\ast {\La}_V) $$
in $H_{S}^0(X,{j}_!{\cK_U}).$
\\2. We keep the same notation as in 1. Then we have an equality
$$\bar{f}_\ast C_{T}^{00}({j_V}_!\cF)=
C_{S}^{00}({j}_!Rf_\ast {\cal{F}})-{\rm {\rm rk}}(\cF) \cdot C_{S}^{00}({j}_!Rf_\ast {\La}_V) $$
in $H_{S}^0(X,\cK_X).$
\end{corollary}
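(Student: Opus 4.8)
The plan is to reduce both assertions to Proposition \ref{prop:2} by specializing it to the diagonal and then comparing classes through the injection of Lemma \ref{lem:1}. First I would apply Proposition \ref{prop:2} with $C=\delta_U(U)$, $C'=\delta_V(V)$ and $u'={\rm id}_{\cF}$. In this situation the projection $h:C' \longrightarrow C$ is identified with $f:V \longrightarrow U$, and the push-forward of the identity cohomological correspondence is again the identity, $h_\ast {\rm id}_{\cF}={\rm id}_{Rf_\ast \cF}$, by the same compatibility of the cohomological correspondence with proper push-forward that yields (\ref{kuso}) in the proof of Theorem \ref{tth}. Proposition \ref{prop:2} then gives
$$\bar{f}_\ast C_{T,!}^0({j_V}_!\cF)=C_{S,!}^0({j}_!Rf_\ast \cF,{\rm id}), \qquad \bar{f}_\ast C_{T,!}^0({j_V}_!\La_V)=C_{S,!}^0({j}_!Rf_\ast \La_V,{\rm id})$$
in $H_S^0(X,{j}_!\cK_U \otimes \delta_X^\ast R{g_X}_\ast \La)$. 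Since $R^qf_\ast \cF$ is smooth for every $q$, the projection-formula argument of Lemma \ref{lll1}, and hence Lemma \ref{lem:1}, applies verbatim to the complexes $Rf_\ast \cF$ and $Rf_\ast \La_V$, so the classes $C_{S,!}^{0}$ and $C_{S,!}^{00}$ are defined for them, with ${\rm Tr}({\rm id})={\rm rk}(Rf_\ast \cF)=\Sigma_q(-1)^q{\rm rk}(R^qf_\ast \cF)$.

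Next I would record that the push-forward (\ref{pus}) and the ordinary push-forward $\bar{f}_\ast\colon H_T^0(Y,{j_V}_!\cK_V) \longrightarrow H_S^0(X,{j}_!\cK_U)$ are compatible with the injections of Lemma \ref{lem:1} induced by $\La \longrightarrow R{g}_\ast \La$; that is, the evident square relating the two push-forwards and the two injections commutes. This is immediate from the construction of the map (\ref{ppush3}): its only factor acting on the $\cK$-part is the proper push-forward $R\bar{f}_\ast {j_V}_!\cK_V \longrightarrow {j}_!\cK_U$, so restricting (\ref{ppush3}) along the unit $\La \longrightarrow R{g}_\ast \La$ recovers this push-forward tensored with the unit.

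With these in hand, assertion 1 follows by a bookkeeping of lifts. By Definition \ref{aho1} and the commutative square above, the image of $\bar{f}_\ast C_{T,!}^{00}({j_V}_!\cF)$ under the injection equals $\bar{f}_\ast$ of the difference $C_{T,!}^0({j_V}_!\cF)-{\rm rk}(\cF)\cdot C_{T,!}^0({j_V}_!\La_V)$, which by the two displayed equalities is
$$C_{S,!}^0({j}_!Rf_\ast \cF,{\rm id})-{\rm rk}(\cF)\cdot C_{S,!}^0({j}_!Rf_\ast \La_V,{\rm id}).$$
On the other hand, the image of $C_{S,!}^{00}({j}_!Rf_\ast \cF)-{\rm rk}(\cF)\cdot C_{S,!}^{00}({j}_!Rf_\ast \La_V)$ under the injection differs from this expression only by the term $\bigl(-{\rm rk}(Rf_\ast \cF)+{\rm rk}(\cF)\cdot{\rm rk}(Rf_\ast \La_V)\bigr)\cdot C_{S,!}^0({j}_!\La_U)$, which vanishes by the multiplicativity ${\rm rk}(Rf_\ast \cF)={\rm rk}(\cF)\cdot{\rm rk}(Rf_\ast \La_V)$ used in the proof of Corollary \ref{Coo}. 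As the injection of Lemma \ref{lem:1} is injective, assertion 1 follows; assertion 2 is then obtained by applying the canonical map $H_S^0(X,{j}_!\cK_U)\longrightarrow H_S^0(X,\cK_X)$, with which $\bar{f}_\ast$ commutes and under which $C_{S,!}^{00}$ maps to $C_S^{00}$ by Definition \ref{aho1}. The main obstacle is the compatibility square of the second paragraph, where one must check that the refined push-forward (\ref{ppush3}) genuinely restricts to the ordinary push-forward on the $\cK$-factor; granted Proposition \ref{prop:2}, everything else is formal bookkeeping with ranks.
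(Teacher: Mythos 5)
Your proposal is correct and follows essentially the same route as the paper: specialize Proposition \ref{prop:2} to the diagonal with $u'={\rm id}_{\cF}$, use the commutativity of the push-forward \eqref{pus} with the injections of Lemma \ref{lem:1} (the paper records exactly your ``compatibility square''), and cancel the extra multiples of $C_{S,!}^0(j_!\La_U)$ via ${\rm rk}(Rf_\ast\cF)={\rm rk}(\cF)\cdot{\rm rk}(Rf_\ast\La_V)$ before invoking injectivity. The paper leaves the rank bookkeeping and the applicability of Lemma \ref{lem:1} to $Rf_\ast\cF$ implicit, but your more explicit version is the intended argument.
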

\begin{proof}
1. We consider the commutative diagram
\[\xymatrix{
H_{T}^0(Y,{j_V}_!\cK_V) \ar[r]\ar[d]^{\bar{f}_\ast}
&
H_{T}^0(Y,{j_V}_! \cK_V \otimes \delta_Y^\ast R{g_Y}_\ast \La) \ar[d]^{\bar{f}_\ast}\\
H_S^0(X,{j}_!\cK_U) \ar[r] &
H_S^0(X,{j}_!\cK_U \otimes \delta_{X}^\ast R{g_X}_\ast \La)
}
\] where the right vertical arrow
is the map (\ref{pus}) in the case where
$C=\delta_U(U)$ and $C'=\delta_V(V)$ are the diagonals.
Since the canonical map $H_S^0(X,{j}_!\cK_U)  \longrightarrow H_S^0(X,{j}_!\cK_U \otimes \delta_{X}^\ast R{g_X}_\ast \La)$ is injective,    we may regard the equality as an equality in $H_S^0(X,{j}_!\cK_U \otimes \delta_{X}^\ast R{g_X}_\ast \La)$. 
Hence the assertion follows from Proposition \ref{prop:2}
and Lemma \ref{lem:1}.
\\2. The assertion follows from 1 and Remark \ref{aho2} immediately.
\end{proof}
\begin{corollary}\label{COroo}
(Kato-Saito conductor formula in characteristic $p>0$)
Let the notation be as in Corollary \ref{cor:push-forward}. 
We assume that $k$ is a perfect field, that
$E$ denotes a finite extension of $\mathbb{Q}_l$
and that $\cF$ is a smooth $E$-sheaf and
the strong resolution of singularities.
Then we have an equality
$$-{\bar{f}}_\ast {\rm Sw}^{\rm naive}(\cF)={\rm Sw}^{\rm naive} (Rf_\ast \cF)-{\rm rk}(\cF) \cdot {\rm Sw}^{\rm naive} (Rf_\ast E)$$
in $H_S^0(X,\cK_X).$
\end{corollary}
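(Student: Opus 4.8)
The plan is to read off the conductor formula from two facts already established: the localized Abbes--Saito formula of Theorem \ref{43}, which identifies the localized characteristic class with the cycle class of the naive Swan class, and the compatibility of the localized characteristic class with proper push-forward recorded in Corollary \ref{cor:push-forward}.2. Indeed, Corollary \ref{cor:push-forward}.2 already furnishes the cohomological identity
$$\bar{f}_\ast C_{T}^{00}({j_V}_!\cF)=C_{S}^{00}({j}_!Rf_\ast \cF)-{\rm rk}(\cF)\cdot C_{S}^{00}({j}_!Rf_\ast E)$$
in $H_S^0(X,\cK_X)$, so the only work is to rewrite each of the three localized characteristic classes as a Swan class and to commute $\bar{f}_\ast$ past the cycle class map.

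The first point to settle is that Theorem \ref{43} may be applied to all three terms. For $\cF$ on $V$ this is immediate, since $Y$ is smooth over the perfect field $k$ and we assume strong resolution of singularities. The terms on $X$ involve the complexes $Rf_\ast \cF$ and $Rf_\ast E$ rather than sheaves; here I would use that $f:V\longrightarrow U$ is proper and smooth, so that by the proper smooth base change theorem each cohomology sheaf $R^q f_\ast \cF$ (resp.\ $R^q f_\ast E$) is a smooth $E$-sheaf on $U$. Since both $C_S^{00}({j}_!(-))$ and ${\rm Sw}^{{\rm naive}}(-)$ are additive in distinguished triangles, they factor through the Grothendieck group of smooth $E$-sheaves on $U$, which is precisely the convention under which the terms $C_S^{00}({j}_!Rf_\ast \cF)$ in Corollary \ref{cor:push-forward} are to be understood. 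Applying Theorem \ref{43} to each $R^q f_\ast \cF$ and summing with signs $(-1)^q$ then yields
$$C_{S}^{00}({j}_!Rf_\ast \cF)=-{\rm cl}({\rm Sw}^{{\rm naive}}(Rf_\ast \cF)),\quad C_{S}^{00}({j}_!Rf_\ast E)=-{\rm cl}({\rm Sw}^{{\rm naive}}(Rf_\ast E)),$$
while Theorem \ref{43} on $Y$ gives $C_{T}^{00}({j_V}_!\cF)=-{\rm cl}({\rm Sw}^{{\rm naive}}(\cF))$.

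Next I would substitute these three equalities into the identity of Corollary \ref{cor:push-forward}.2 and use the compatibility of the cycle class map with proper push-forward, namely $\bar{f}_\ast\circ {\rm cl}={\rm cl}\circ \bar{f}_\ast$ for $\bar{f}_\ast:CH_0(T)_{E_0}\longrightarrow CH_0(S)_{E_0}$ (the morphism $\bar{f}$ sends $T$ into $S$). This turns the identity into a relation between the cycle classes of the various Swan classes in $H_S^0(X,\cK_X)$; after cancelling the common $-{\rm cl}(\cdot)$ and rearranging, it produces the stated formula
$$-\bar{f}_\ast {\rm Sw}^{{\rm naive}}(\cF)={\rm Sw}^{{\rm naive}}(Rf_\ast \cF)-{\rm rk}(\cF)\cdot {\rm Sw}^{{\rm naive}}(Rf_\ast E).$$
One should of course keep careful track of the signs arising from Theorem \ref{43} and from the alternating sum over $q$.

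The deduction above is formal once Theorem \ref{43} and Corollary \ref{cor:push-forward} are in hand, so essentially all of the difficulty has been front-loaded into those results. Within the present argument the single genuinely delicate step is the passage from the sheaf $\cF$ to the derived push-forwards $Rf_\ast \cF$ and $Rf_\ast E$: I expect the main obstacle to be justifying that $f$ proper and smooth forces the higher direct images to be \emph{smooth} sheaves and then invoking the additivity of both invariants, since it is exactly this reduction that lets Theorem \ref{43} — and with it the hypotheses that $k$ is perfect and that strong resolution of singularities holds — be applied term by term to each $R^q f_\ast \cF$.
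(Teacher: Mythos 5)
Your proposal is correct and follows exactly the paper's own (one-line) proof: substitute the localized Abbes--Saito formula of Theorem \ref{43} into the push-forward identity of Corollary \ref{cor:push-forward}.2 and use compatibility of the cycle class map with $\bar{f}_\ast$. Your additional remarks on reducing the terms $C_S^{00}(j_!Rf_\ast\cF)$ to the smooth sheaves $R^qf_\ast\cF$ via additivity are a reasonable elaboration of a point the paper leaves implicit, and do not change the argument.
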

\begin{proof}
This follows from Theorem \ref{43} and Corollary \ref{cor:push-forward}.2.
\end{proof}

Let $X$ be a scheme and $U \subset X$ an open dense subscheme smooth of dimension $d$ over $k.$
Let $S$ be the complement $X \backslash U.$
Let $j:U \longrightarrow X$ denote the open immersion.

Let $\cF$ denote a smooth $\Lambda$-sheaf on $U.$
Assuming the strong resolution of singularities, we define
the localized characteristic class
$C_S^{0}(j_!\cF) \in H_S^0(X,\cK_X)$ and prove the equality 
$C_{S}^{00}(j_!\cF)=-{\rm cl}({\rm Sw}^{\rm naive}(\cF))$ in $H_S^0(X,\cK_X).$
Let $f:X' \longrightarrow X$ be a desingularization preserving the open subscheme $U$
by the assumption of the strong resolution of singularities.
Let $j':U \longrightarrow X'$ denote the open immersion and $S'$ the complement $X' \backslash U.$
We denote by $C_{X'}^0(j_!\cF) \in H_S^0(X,\cK_X)$
the image of the localized characteristic class
$C_{S'}^0(j'_!\cF) \in H_{S'}^0(X',\cK_{X'})$
in Remark \ref{aho2}
 by the proper push-forward
$f_\ast :H_{S'}^0(X',\cK_{X'}) \longrightarrow H_S^0(X,\cK_X).$

\begin{corollary}\label{chinpo}
Let the notation be as above.
We consider two desingularizations $X' \longrightarrow X$ and $X'' \longrightarrow X$
preserving the open subscheme $U.$
Then we have an equality
$$C_{X'}^0(j_!\cF)=C_{X''}^0(j_!\cF)$$
in $H_S^0(X,\cK_X).$
\end{corollary}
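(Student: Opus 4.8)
The plan is to dominate the two desingularizations by a common one and to deduce the equality from the compatibility of the localized characteristic class with proper push-forward proved in Proposition \ref{prop:2}. Write $f':X' \longrightarrow X$ and $f'':X'' \longrightarrow X$ for the two desingularizations (so $f'$ is the $f$ of the preceding definition), and let $j':U \longrightarrow X'$, $j'':U \longrightarrow X''$ be the open immersions and $S'=X'\backslash U$, $S''=X''\backslash U$. First I would form the fiber product $X' \times_X X''$ and let $\overline{U}$ be the closure of the image of the open immersion $U \hookrightarrow X' \times_X X''$ coming from the two isomorphisms $X'|_U \simeq U \simeq X''|_U$; then $U$ is open and dense in $\overline{U}$. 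Using the strong resolution of singularities I choose a desingularization $g:W \longrightarrow \overline{U}$ preserving $U$, with $W$ smooth; writing $p':\overline{U}\longrightarrow X'$ and $p'':\overline{U}\longrightarrow X''$ for the two projections, I set $g'=p'\circ g$ and $g''=p''\circ g$. Since $f'$ and $f''$ are proper and isomorphisms over $U$, the morphisms $g':W\longrightarrow X'$ and $g'':W\longrightarrow X''$ are proper, are isomorphisms over $U$, and satisfy $f'\circ g'=f''\circ g''$ (both equal the structure morphism $W\longrightarrow X$). Put $S_W=W\backslash U$ and let $j_W:U\longrightarrow W$ denote the open immersion.

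The key step is the following compatibility: for a proper morphism $g:W\longrightarrow X'$ of smooth schemes over $k$ which is an isomorphism over $U$, one has ${g}_\ast C_{S_W}^0({j_W}_!\cF)=C_{S'}^0(j'_!\cF)$ in $H_{S'}^0(X',\cK_{X'})$. To prove this I would apply Proposition \ref{prop:2} with $\bar f=g$, with $f=\mathrm{id}_U$ (which is proper and smooth of relative dimension $0$, so $Rf_\ast\cF=\cF$), with $C=\delta_U(U)=C'$ the diagonals and $u'=\mathrm{id}_\cF$; the projection $h:C'\longrightarrow C$ is the identity, so $h_\ast u'=\mathrm{id}_\cF$. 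This yields the equality of refined localized characteristic classes
$$
{g}_\ast C_{S_W,!}^0({j_W}_!\cF)=C_{S',!}^0(j'_!\cF)
$$
in the twisted group $H_{S'}^0\bigl(X',j'_!\cK_U \otimes \delta_{X'}^\ast R{g_{X'}}_\ast\La\bigr)$, via the push-forward map \eqref{pus}. I then transport this along the canonical map $j_!\cK_U \longrightarrow \cK$ and the isomorphism $H_{S}^0(X,\cK_X)\simeq H_{S}^0(X,\cK_X\otimes\delta_X^\ast R{g_X}_\ast\La)$ of Remark \ref{aho2}, using the commutativity of a square analogous to the diagram \eqref{hentai} in the proof of Theorem \ref{tth}: the twisted push-forward \eqref{pus} is compatible with the ordinary proper push-forward ${g}_\ast:H_{S_W}^0(W,\cK_W)\longrightarrow H_{S'}^0(X',\cK_{X'})$ under the canonical maps to $\cK$. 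Since by Remark \ref{aho2} the classes $C_{S'}^0(j'_!\cF)$ and $C_{S_W}^0({j_W}_!\cF)$ are exactly the $\cK$-valued classes corresponding to the refined ones, the asserted compatibility follows.

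Granting the key step, I conclude by functoriality of proper push-forward. Applying it to $g'$ and to $g''$ gives ${g'}_\ast C_{S_W}^0({j_W}_!\cF)=C_{S'}^0(j'_!\cF)$ and ${g''}_\ast C_{S_W}^0({j_W}_!\cF)=C_{S''}^0(j''_!\cF)$, whence
\begin{align*}
C_{X'}^0(j_!\cF)
&= {f'}_\ast C_{S'}^0(j'_!\cF)
= {f'}_\ast {g'}_\ast C_{S_W}^0({j_W}_!\cF)
= (f'\circ g')_\ast C_{S_W}^0({j_W}_!\cF)\\
&= (f''\circ g'')_\ast C_{S_W}^0({j_W}_!\cF)
= {f''}_\ast {g''}_\ast C_{S_W}^0({j_W}_!\cF)
= {f''}_\ast C_{S''}^0(j''_!\cF)
= C_{X''}^0(j_!\cF),
\end{align*}
using $f'\circ g'=f''\circ g''$. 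This is the required equality.

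I expect the main obstacle to be the construction of $W$ with all the required properties: one must use the strong resolution of singularities to produce a dominating smooth scheme that is an isomorphism over $U$ and admits proper morphisms to both $X'$ and $X''$ compatible over $X$, which is precisely where the resolution hypothesis enters. The second delicate point is checking that the refined equality of Proposition \ref{prop:2} descends to the $\cK$-valued equality through Remark \ref{aho2}; this is routine given the compatibility diagram already exploited for Theorem \ref{tth}, but the transport must be carried out carefully because it passes through the map $H_{S'}^0(X',j'_!\cK_U\otimes\cdots)\longrightarrow H_{S'}^0(X',\cK_{X'}\otimes\cdots)$ rather than an isomorphism.
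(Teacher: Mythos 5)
Your proposal is correct and follows essentially the same route as the paper: dominate the two desingularizations by a common smooth model over $X' \times_X X''$ preserving $U$, and reduce to the push-forward compatibility of the localized characteristic class (the paper invokes Corollary \ref{cor:push-forward}.2, which is exactly the packaged form of your application of Proposition \ref{prop:2} with $f=\mathrm{id}_U$ together with the descent via Remark \ref{aho2}). The only cosmetic difference is that you spell out the descent from the refined twisted classes to the $\cK$-valued ones, which the paper leaves implicit.
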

\begin{proof}We take a smooth model $\widetilde{X} \longrightarrow X' \times_{X} X''$
preserving $U$ by the strong resolution of singularities.
We consider the following cartesian diagram
\[\xymatrix{
U \ar[d]^{\rm id}\ar[r]^{\widetilde{j}}& \widetilde{X}\ar[d]^{\pi} \\
U \ar[r]^{j'}& X' \\
}
\]
where the right vertical arrow is the canonical projection.
Let $\widetilde{S}$ denote the complement $\widetilde{X} \backslash U.$
It suffices to show
$\pi _\ast C_{\widetilde{S}}^0(\widetilde{j}_!\cF)=C^0_{S'}(j'_!\cF)$
in $H_{S'}^0(X',\cK_{X'}).$
This equality follows immediately from
Corollary \ref{cor:push-forward}.2.
Hence the required assertion follows. 
\end{proof}
\begin{definition}\label{fin}
Let the notation be as in Corollary \ref{chinpo}.
We take a smooth model $f:X' \longrightarrow X$
preserving $U.$
We put $C_S^0(j_!\cF):=C_{X'}^0(j_!\cF) \in H_S^0(X,\cK_X).$ 
This class is independent of a choice of a smooth model
$f:X' \longrightarrow X$ by Corollary \ref{chinpo}.
We call it 
{\it the localized characteristic class of $j_!\cF$}.
Further we put $C_S^{00}(j_!\cF):=C_S^0(j_!\cF)-{\rm rk}(\cF) \cdot C_S^0(j_!\La) \in H_S^0(X,\cK_X).$
\end{definition}
\begin{corollary}\label{ten}
Let the notation be as in Theorem \ref{43}.
We do not assume that $X$ is smooth over $k.$
Then we have
$$C_{S}^{00}(j_!\cF)=-{\rm cl}({\rm Sw}^{\rm naive}(\cF))$$
in $H_S^0(X,\cK_X)$ where the left hand side is the localized characteristic class
defined in Definition \ref{fin}.
\end{corollary}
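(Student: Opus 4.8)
The plan is to reduce the non-smooth case to the smooth case already settled in Theorem \ref{43} by passing to a desingularization, and then to propagate the equality downstairs via the functoriality of both sides under proper push-forward. First I would invoke the strong resolution of singularities to choose a desingularization $f:X' \longrightarrow X$ preserving the open subscheme $U$, so that $X'$ is smooth over $k$, the boundary $S'=X' \backslash U$ satisfies $S'=f^{-1}(S)$, and $f$ restricts to the identity on $U$; let $j':U \longrightarrow X'$ be the resulting open immersion. By Definition \ref{fin}, the class $C_S^0(j_!\cF)$ is by its very construction the proper push-forward $f_\ast C_{S'}^0(j'_!\cF)$, and likewise $C_S^0(j_!\La)=f_\ast C_{S'}^0(j'_!\La)$. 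Corollary \ref{chinpo} already guarantees that these classes are independent of the chosen smooth model, so no ambiguity arises in the reduction step.

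Combining these two identities with the linearity of $f_\ast$, I would obtain
$$C_S^{00}(j_!\cF)=f_\ast C_{S'}^0(j'_!\cF)-{\rm rk}(\cF)\cdot f_\ast C_{S'}^0(j'_!\La)=f_\ast C_{S'}^{00}(j'_!\cF)$$
in $H_S^0(X,\cK_X)$. Since $X'$ is smooth over $k$ and the remaining hypotheses of Theorem \ref{43} (perfect base field, smooth $E$-sheaf, strong resolution) are inherited over $X'$, that theorem yields $C_{S'}^{00}(j'_!\cF)=-{\rm cl}({\rm Sw}^{\rm naive}(\cF))$ in $H_{S'}^0(X',\cK_{X'})$, where now the naive Swan class is formed in $CH_0(S')_{E_0}$. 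Applying $f_\ast$ to this equality and using that the cycle class map commutes with proper push-forward, i.e.\ ${\rm cl}\circ f_\ast=f_\ast\circ{\rm cl}$, reduces the whole statement to the single identity
$$f_\ast {\rm Sw}^{\rm naive}(\cF)={\rm Sw}^{\rm naive}(\cF)$$
of $0$-cycle classes in $CH_0(S)_{E_0}$, where the left-hand Swan class lives on $X'$ and the right-hand one on $X$.

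The main obstacle is precisely this last identity: the invariance of the naive Swan class under the proper push-forward $f_\ast:CH_0(S')_{E_0}\longrightarrow CH_0(S)_{E_0}$ attached to a desingularization that is the identity on $U$. I would establish it by appealing to the construction of the naive Swan class in \cite[Definition 4.2.2]{KS}, which is set up via alterations and logarithmic blow-ups in a way that is intrinsic to the smooth sheaf $\cF$ on $U$ together with the pair $(X,U)$ up to proper modifications of the boundary; in particular its formation is compatible with proper push-forward along morphisms which restrict to the identity on $U$. Granting this compatibility, the chain of equalities closes up and delivers $C_S^{00}(j_!\cF)=-{\rm cl}({\rm Sw}^{\rm naive}(\cF))$ in $H_S^0(X,\cK_X)$, as desired. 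I expect the verification of the push-forward compatibility of the Swan class to be the only genuinely nontrivial input, since everything else is a formal consequence of Definition \ref{fin} and Theorem \ref{43}.
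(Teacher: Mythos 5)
Your proposal is correct and follows essentially the same route as the paper: choose a smooth model $f:X'\longrightarrow X$ preserving $U$, use Definition \ref{fin} to write $C_S^{00}(j_!\cF)=f_\ast C_{S'}^{00}(j'_!\cF)$, apply Theorem \ref{43} upstairs, and conclude via the compatibility $f_\ast {\rm Sw}_{X'}^{\rm naive}(\cF)={\rm Sw}^{\rm naive}(\cF)$ of the naive Swan class with proper push-forward. The paper likewise invokes this last identity without further argument, so your flagging of it as the one nontrivial input is consistent with (indeed slightly more careful than) the published proof.
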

\begin{proof}
We take a smooth model $f:X' \longrightarrow X$
preserving $U$ by the strong resolution of singularities.
We denote by ${\rm Sw}_{X'}^{\rm naive}(\cF) \in CH_0(S')_{E_0}$
the naive Swan class of $\cF$
with respect to $(U,X').$
We have an equality
$f_\ast {\rm Sw}_{X'}^{\rm naive}(\cF)={\rm Sw}^{\rm naive}(\cF)$
in $CH_0(S)_{E_0}.$
By Definition \ref{fin} and Theorem \ref{43},
we acquire the following equalities
$$C_{S}^{00}(j_!\cF)=f_\ast C_{S'}^{00}(j'_!\cF)=-f_\ast {\rm cl}({\rm Sw}_{X'}^{\rm naive}(\cF))=-{\rm cl}({\rm Sw}^{\rm naive}(\cF))$$
in $H_S^0(X,\cK_X).$
Hence the assertion follows.
\end{proof}

\begin{corollary}
Let the notation be as in Corollary \ref{cor:push-forward}. Moreover, we assume the strong resolution of singularities
 and that $f$ is a finite \'{e}tale morphism.
Let ${\rm d}_{V/U}^{\rm log} \in CH_0(S) \otimes \mathbb{Q}$ be the wild discriminant
of $V$ over $U$ defined in (\cite[Definition 4.3.1]{KS}).
We have
$${\bar{f}}_\ast C_T^{00}({j_V}_!\cF)
=C_S^{00}({j}_!f_\ast \cF)+{\rm rk}(\cF) \cdot {\rm cl}({\rm d}_{V/U}^{\rm log})$$ in $H_S^0(X,\cK_X)$.
\end{corollary}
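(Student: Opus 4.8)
The plan is to deduce the statement from the push-forward compatibility of Corollary~\ref{cor:push-forward}.2 together with the localized Abbes-Saito formula (Theorem~\ref{43}), the only genuinely new ingredient being the identification of the localized characteristic class of $j_!f_\ast E$ with the wild discriminant. First I would record that a finite \'{e}tale morphism $f:V\longrightarrow U$ is in particular proper and smooth of relative dimension $0$, so the hypotheses of Corollary~\ref{cor:push-forward} hold; moreover the higher direct images of $f$ vanish, whence $Rf_\ast\cF=f_\ast\cF$ and $Rf_\ast E=f_\ast E$, where $E$ denotes the constant sheaf on $V$. Applying Corollary~\ref{cor:push-forward}.2 then gives
$$\bar{f}_\ast C_T^{00}({j_V}_!\cF)=C_S^{00}({j}_!f_\ast\cF)-{\rm rk}(\cF)\cdot C_S^{00}({j}_!f_\ast E)$$
in $H_S^0(X,\cK_X)$, so that comparison with the asserted equality reduces the problem to the single identity $C_S^{00}({j}_!f_\ast E)=-{\rm cl}({\rm d}^{\rm log}_{V/U})$.

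Next I would observe that $f_\ast E$ is a smooth $E$-sheaf on $U$, of rank equal to the degree of $f$, so that Theorem~\ref{43} applies directly (recall $X$ is smooth, being inherited from Corollary~\ref{cor:push-forward}) and yields $C_S^{00}({j}_!f_\ast E)=-{\rm cl}({\rm Sw}^{\rm naive}(f_\ast E))$. The entire statement therefore comes down to identifying the naive Swan class of the pushforward of the constant sheaf with the wild discriminant, i.e.\ to proving ${\rm Sw}^{\rm naive}(f_\ast E)={\rm d}^{\rm log}_{V/U}$. This is the content of the definition of the wild discriminant in \cite[Definition 4.3.1]{KS}: the sheaf $f_\ast E$ corresponds to the permutation representation attached to the covering (the regular representation in the Galois case), and by the conductor-discriminant formalism its Swan class records exactly the wild part of the discriminant of $V$ over $U$. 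Substituting this identity and collecting signs gives
$$\bar{f}_\ast C_T^{00}({j_V}_!\cF)=C_S^{00}({j}_!f_\ast\cF)+{\rm rk}(\cF)\cdot{\rm cl}({\rm d}^{\rm log}_{V/U}),$$
which is the claim.

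The main obstacle is this last identification, matching ${\rm Sw}^{\rm naive}(f_\ast E)$ with ${\rm d}^{\rm log}_{V/U}$ compatibly with the cycle class map. The delicate points are entirely about the Kato-Saito invariants rather than about the localized characteristic class: one must check that the naive Swan class appearing in Theorem~\ref{43}, which lives in $CH_0(S)_{E_0}$, agrees after the relevant normalizations and the passage to $\mathbb{Q}$-coefficients with the wild discriminant of \cite[Definition 4.3.1]{KS} in $CH_0(S)\otimes\mathbb{Q}$, and that the trivial summand $E\hookrightarrow f_\ast E$ (which is unramified and hence contributes trivially) causes no discrepancy. Once this bookkeeping is settled, the remainder is the formal combination of the push-forward compatibility and the localized Abbes-Saito formula already established above.
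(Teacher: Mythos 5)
Your argument is correct and follows essentially the same route as the paper: apply Corollary \ref{cor:push-forward}.2 (noting $Rf_\ast=f_\ast$ for $f$ finite \'{e}tale), then use Theorem \ref{43} to rewrite $C_S^{00}(j_!f_\ast E)$ as $-{\rm cl}({\rm Sw}^{\rm naive}(f_\ast E))$, and finally identify ${\rm Sw}^{\rm naive}(f_\ast E)$ with ${\rm d}^{\rm log}_{V/U}$ via the definition of the wild discriminant. The paper treats this last identification as immediate from \cite[Definition 4.3.1]{KS}, exactly the step you single out as the only nontrivial bookkeeping.
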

\begin{proof}
By Corollary \ref{cor:push-forward}.2, we obtain an equality
${\bar{f}}_\ast C_T^{00}({j_V}_!\cF)
=C_S^{00}({j}_!f_\ast \cF)-{\rm rk}(\cF) \cdot C_S^{00}({j}_!f_\ast \Lambda_V).$
By Theorem \ref{43},
we acquire
$C_S^{00}({j}_!f_\ast \Lambda_V)
=-{\rm cl}({\rm Sw}(f_\ast \Lambda_V)).$ 
By the definition of the Swan class,
we have ${\rm Sw}(f_\ast \Lambda_V)={\rm d}_{V/U}^{\rm log}.$
Hence the following equalities hold
\begin{align*}
\lefteqn{
{\bar{f}}_\ast C_T^{00}({j_V}_!\cF)=C_S^{00}({j}_!f_\ast \cF)-{\rm rk}(\cF) \cdot C_S^{00}({j}_!f_\ast \Lambda_V)}\\
&=C_S^{00}({j}_!f_\ast \cF)+{\rm rk}(\cF) \cdot {\rm cl}({\rm d}_{V/U}^{\rm log}).
\end{align*}Thus the assertion follows.
\end{proof}

\end{document}